\newcommand{\commentblock}[1]{}
\numberwithin{equation}{section}
\newcounter{assumptions}
\theoremstyle{plain}
\newtheorem{theorem}{Theorem}[section]
\newtheorem{lemma}[theorem]{Lemma}
\newtheorem{corollary}[theorem]{Corollary}
\newtheorem{proposition}[theorem]{Proposition}
\theoremstyle{remark}
\newtheorem{remark}[theorem]{Remark}
\theoremstyle{definition}
\newtheorem{example}[theorem]{Example}
\newcommand{\N}{\mathbb{N}}
\newcommand{\Z}{\mathbb{Z}}
\newcommand{\R}{\mathbb{R}}
\newcommand{\Rnn}{\mathbb{R}_\geq}
\newcommand{\C}{\mathbb{C}}
\DeclareMathOperator{\Real}{\mathrm{Re}}
\DeclareMathOperator{\Imag}{\mathrm{Im}}
\newcommand{\imag}{\mathrm{i}}
\newcommand{\A}{\mathcal{A}}
\newcommand{\cG}{\mathcal{G}}
\newcommand{\cH}{\mathcal{H}}
\newcommand{\cL}{\mathcal{L}}
\newcommand{\Prob}{\mathbf{P}}
\newcommand{\E}{\mathbf{E}}
\newcommand{\Psf}{\mathsf{P}}
\newcommand{\eqdist}{%
  \mathrel{\vbox{\offinterlineskip\ialign{%
    \hfil##\hfil\cr
    $\scriptscriptstyle\mathrm{d}$\cr
    \noalign{\kern.1ex}
    $=$\cr
}}}}
\newcommand{\1}{\mathbbm{1}}
\newcommand{\comp}{\mathsf{c}}
\newcommand{\interior}[1]{%
	{\kern0pt#1}^{\mathrm{o}}%
}
\newcommand{\distto}{%
  \mathrel{\vbox{\offinterlineskip\ialign{%
    \hfil##\hfil\cr
    $\scriptscriptstyle\mathrm{d}$\cr
    \noalign{\kern-.05ex}
    $\to$\cr
}}}}
\newcommand{\Probto}{%
  \mathrel{\vbox{\offinterlineskip\ialign{%
    \hfil##\hfil\cr
    $\scriptscriptstyle\Prob$\cr
    \noalign{\kern-.05ex}
    $\to$\cr
}}}}
\newcommand{\TVto}{%
  \mathrel{\vbox{\offinterlineskip\ialign{%
    \hfil##\hfil\cr
    $\scriptscriptstyle\mathrm{TV}$\cr
    \noalign{\kern-.05ex}
    $\to$\cr
}}}}
\newcommand{\dx}{\mathrm{d} \mathit{x}}
\newcommand{\dy}{\mathrm{d} \mathit{y}}
\newcommand{\dz}{\mathrm{d} \mathit{z}}
\newcommand{\transp}{\mathsf{T}}
\newcommand{\adj}{\mathrm{adj}}
\newcommand{\Id}{\mathrm{I}_d}
\newcommand{\e}{\mathsf{e}}
\newcommand{\bphi}{\boldsymbol{\varphi}}
\newcommand{\bxi}{\boldsymbol{\xi}}
\newcommand{\bzeta}{\boldsymbol{\zeta}}
\newcommand{\bU}{\boldsymbol U}
\newcommand{\bmu}{\boldsymbol{\mu}}
\newcommand{\bnu}{\boldsymbol{\nu}}
\newcommand{\Ip}{\mathbf{I}_{p}}
\newcommand{\defeq}{\vcentcolon=}
\newcommand{\eqdef}{=\vcentcolon}
\newcommand{\Exp}{\mathrm{Exp}}
\newcommand{\familytree}{\mathcal{T}}
\newcommand{\I}{\mathcal{I}}
\renewcommand{\k}{\mathtt{k}}
\newcommand{\p}{[ \, p \,]}
\newcommand{\Dom}{\mathcal{D}}
\newcommand{\cZ}{\mathcal{Z}}
\begin{document}
\title[Asymptotic expansion of solutions to Markov renewal equations]{Asymptotic expansions of solutions to Markov renewal equations
and their application to general branching processes}

\author{Konrad Kolesko}
\address{}
\email{konrad.kolesko@math.uni.wroc.pl}

\author{Matthias Meiners}
\email{matthias.meiners@math.uni-giessen.de}
\thanks{Corresponding author: Matthias Meiners, \texttt{matthias.meiners@math.uni-giessen.de}.}

\author{Ivana Tomic}
\email{ivana.tomic-2@math.uni-giessen.de}

\begin{abstract}
We consider the Markov renewal equation $F(t) = f(t) + \bmu*F(t)$
for vector-valued functions $f,F: \R \to \R^{p}$ and a $p \times p$ matrix
$\bmu$ of locally finite measures $\mu^{i,j}$ on $[0,\infty)$, $i,j=1,\ldots,p$.
Sgibnev [Semimultiplicative estimates for the solution of the multidimensional renewal equation.
{\em Izv.\ Ross.\ Akad.\ Nauk Ser.\ Mat.}, 66(3):159--174, 2002]
derived an asymptotic expansion for the solution $F$ to the above equation.
We give a new, more elementary proof of Sgibnev's result, which also covers the reducible case.
As a corollary, we infer an asymptotic expansion for the mean of a multi-type general branching process
with finite type space counted with random characteristic.

Finally, some examples are discussed that illustrate phenomena of multi-type branching.
\smallskip

\noindent
{\bf Keywords:} Crump-Mode-Jagers processes, expansion of the mean, Markov renewal equations, Markov renewal theorem, multi-type branching
\\{\bf Subclass:} MSC: Primary: 45M05. Secondary: 60K15, 60J80
\end{abstract}

\maketitle

\section{Introduction}

In the paper at hand, we study the $p$-dimensional \emph{Markov renewal equation}
\begin{equation}	\label{eq:Markov renewal}
F(t) = f(t) + \bmu*F(t)
\end{equation}
for some $p \in \N = \{1,2,\ldots\}$, (column) vector-valued functions $f,F: \R \to \R^{p}$ and a $p \times p$ matrix
\begin{align*}
\bmu
=\begin{pmatrix}
\mu^{1,1}& \mu^{1,2}   & \cdots & \mu^{1,p}  \\
\mu^{2,1} & \mu^{2,2} & \cdots & \mu^{2,p} \\
\vdots & \ddots     & \ddots & \vdots  \\
\mu^{p,1}  & \mu^{p,2} &  \cdots  & \mu^{p,p}
\end{pmatrix}
\end{align*}
of locally finite measures $\mu^{i,j}$ on $[0,\infty)$, $i,j \in \p \defeq \{1,\ldots,p\}$.
The convolution of the matrix of measures $\bmu$ and the vector-valued function $F$
is the column vector whose $i$-th entry is given by
\begin{equation*}
(\bmu*F)_i(t) = \sum_{j=1}^p \mu^{i,j}*F_j(t) = \sum_{j=1}^p \int F_j(t-x) \, \mu^{i,j}(\dx),	\quad	i=1,\ldots,p.
\end{equation*}
In \eqref{eq:Markov renewal}, the function $f$ is given while $F$ is considered unknown.
Under mild assumptions on $f$ and $\bmu$, see Proposition \ref{Prop:existence and uniqueness MRE} below,
there is a unique solution $F$ to \eqref{eq:Markov renewal}.
The Markov renewal equation occurs in numerous applications,
for example in the context of multi-type branching processes,
see Section \ref{sec:applications} below for further details.
The classical Markov renewal theorem \cite{Crump:1970a,Crump:1970b,Sevastjanov+Chistyakov:1971}
gives the leading order of the solution $F$ as $t \to \infty$.
In the realm of branching processes, this corresponds to the leading order of the mean of the process.
There are further extensions to a general type space instead of a finite one, see e.g.\ \cite{Athreya+McDonald+Ney:1978,Athreya+Ney:1978,Nummelin:1978}.

Under suitable assumptions, discussed in detail in Section \ref{subsec:literature},
Sgibnev \cite[Theorem 3]{Sgibnev:2002} proved an expansion for the solution $F$ of the form
\begin{align}	\label{eq:Sgibnev's expansion} 
F(t) = \sum_{\lambda \in \Lambda} \sum_{k=0}^{\k(\lambda)-1} C_{\lambda,k} t^k e^{\lambda t} + \Delta_f(t)
\end{align}
where $\Lambda \subseteq \C$ is a certain finite set of solutions to the characteristic equation \eqref{eq:characteristic equation} below,
$\k(\lambda) \in \N$,
$C_{\lambda,k}$ is a suitable real $p \times p$ matrix, and $\Delta_f(t)$ is a lower-order remainder term.
His proof is based on a representation theorem for homomorphisms on a specific Banach algebra.
The proof is extremely elegant but may go beyond the usual prior knowledge of readers from the probability community.
The aim of this paper is to give a more elementary proof of Sigbnev's result,
which is essentially based only on basic results about Laplace transforms and from complex analysis.

\section{Main results}

Before the main results can be formulated, some preparations are required.

\subsection{Assumptions and preliminaries}
In this section we formulate the assumptions that are made in some of the results or even throughout this text.

\subsubsection*{Laplace transforms}
Throughout the paper, we write $\cL$ for the Laplace operator and apply it component-wise
to vectors and matrices of measures on $[0,\infty)$, e.g., $\cL\bmu(\theta) = (\cL\mu^{i,j}(\theta))_{i,j\in\p}$
where
\begin{align*}
\cL\mu^{i,j}(\theta) \defeq \int e^{-\theta x} \, \mu^{i,j}(\dx),	\quad	\theta \geq 0.
\end{align*}
We write $\mu^{i,j}(t)$ for $\mu^{i,j}([0,t])$ and $\bmu(t)$ for $\bmu([0,t])=(\mu^{i,j}([0,t]))_{i,j \in \p}$, $t \geq 0$.
Further, $\mu^{i,j}(\infty) \defeq \lim_{t \to \infty} \mu^{i,j}(t) = \mu^{i,j}([0,\infty))$
and $\bmu(\infty) = (\mu^{i,j}(\infty))_{i,j \in \p}$.
With this notation
\begin{align*}
\cL\bmu(0) = (\cL\mu^{i,j}(0))_{i,j\in\p} = \bmu(\infty)
\end{align*}
is the matrix of total masses, which may well have infinite entries.

Whenever $\cL\mu^{i,j}(\theta)<\infty$ for some $\theta \in \R$, then
$\cL\mu^{i,j}(z) = \int e^{-z x} \, \mu^{i,j}(\dx)$ converges absolutely
for every $z \in \C$ with $\Real(z) \geq \theta$.
We write
\begin{align*}
\Dom(\cL\bmu) \defeq \{z \in \C: {\textstyle \int} e^{-\Real(z) x} \, \mu^{i,j}(\dx) < \infty \text{ for all } i,j \in \p\}
\end{align*}
for the domain of $\cL\bmu$. Then, in general, $\Dom(\cL\bmu)$ is either empty, a
half-space, or $\C$.
Throughout the paper, we make the basic assumption that the domain of finiteness
of the Laplace transform $\cL\bmu$ is non-empty:

\begin{enumerate}[{\bf{(A}1)}]
	\setcounter{enumi}{\value{assumptions}}
	\item
	There exists a $\vartheta \in \interior{\Dom(\cL\bmu)}$. In particular,
	\begin{align}	
	\cL\mu^{i,j}(\vartheta) = \int e^{-\vartheta x} \, \mu^{i,j}(\dx) < \infty
	\end{align}
	for all $i,j \in \p$.
	\label{ass:first moment}
\setcounter{assumptions}{\value{enumi}}
\end{enumerate}

\subsubsection*{Primitivity}

Recall that a nonnegative $p \times p$ matrix $A = (a_{ij})_{i,j\in\p}$ is called \emph{primitive}
if there exists a nonnegative integer $n$ such that the matrix power $A^n$ has only positive entries.
Notice that if $\widetilde A$ is the incidence matrix of $A$, that is,
the matrix with all positive (including the infinite) entries replaced by ones, then $A$ is primitive iff its incidence matrix is.

All matrices $\cL\bmu(\theta)$, $\theta \in \Dom(\cL\bmu) \cap \R$
have the same incidence matrix,
which may also be seen as the incidence matrix of $\bmu$,
and we therefore denote it by $\widetilde \bmu$.
We call equation \eqref{eq:Markov renewal}
\emph{primitive} if the matrix $\widetilde \bmu$ is primitive.
The paper at hand is not limited to the primitive case,
but since this case is of particular interest, we shall give it special attention occasionally.

\subsubsection*{Spectral radius}

For a real or complex square matrix $A$, we write
\begin{align}	\label{eq:rho_A}
\rho_A \defeq \rho(A) \defeq \sup\{|\lambda|: \, \lambda \in \C \text{ is an eigenvalue of } A\}
\end{align}
for its spectral radius. Gelfand's formula gives the alternative representation
\begin{align}	\label{eq:Gelfand}
\rho(A) = \lim_{n \to \infty} \|A^n\|^{\frac1n} = \inf_{n \in \N} \|A^n\|^{\frac1n}.
\end{align}
If $A$ is nonnegative and primitive,
then $\rho_A$ is the Perron-Frobenius eigenvalue of $A$, see \cite[Theorem 1.1]{Seneta:1981}.
In the reducible case, $\rho_A$ is still an eigenvalue of $A$, see \cite[Section 15.5, Theorem 1]{Lancaster+Tismenetsky:1985}.

Throughout the paper, we make the following assumption.

\begin{enumerate}[{\bf{(A}1)}]
	\setcounter{enumi}{\value{assumptions}}
	\item
	The spectral radius $\rho_{\bmu(0)}$ of the matrix $\bmu(0) = (\mu^{i,j}(0))_{i,j\in\p}$
	satisfies $\rho_{\bmu(0)} < 1$.				\label{ass:subcritical instant offspring}
\setcounter{assumptions}{\value{enumi}}
\end{enumerate}

\subsubsection*{Existence and uniqueness of solutions}

The Markov renewal measure defined by
\begin{equation}	\label{eq:U}
\bU = (U^{i,j})_{i,j\in\p} \defeq \sum_{n=0}^\infty \bmu^{*n}
\end{equation}
is of fundamental importance when analyzing Markov renewal equations.
The following basic existence and uniqueness result holds.

\begin{proposition}	\label{Prop:existence and uniqueness MRE}
Let $\bmu=(\mu^{i,j})_{i,j\in\p}$ be a $p \times p$ matrix
of locally finite measures on $[0,\infty)$ satisfying (A\ref{ass:subcritical instant offspring})
with associated Markov renewal measure $\bU$.
Then the following assertions hold.
\begin{enumerate}[(a)]
	\item
		The Markov renewal function $\bU(t)$ is finite at every $t \geq 0$.
	\item
		If, additionally, (A\ref{ass:first moment}) holds,
		then, for any sufficiently large $\theta \in \Dom(\cL\bmu)$, we have $\rho(\cL\bmu(\theta))<1$ and,
		for any such $\theta$,
		it holds that $U^{i,j}(t) = o(e^{\theta t})$ as $t \to \infty$ for all $i,j\in\p$.
	\item
		Let $f = (f_1,\ldots,f_p)^\transp$ be such that each $f_{i}$ is a locally bounded, Borel-measurable function supported on $[0,\infty)$,
		$i=1,\ldots,p$.
		Then there is a unique locally finite solution $F:\R \to \R^{p}$
		to the Markov renewal equation \eqref{eq:Markov renewal}, namely,
		\begin{equation}	\label{eq:solution MRE explicit}
		F(t) = \bU*f(t),	\quad	t \in \R.
		\end{equation}
\end{enumerate}
\end{proposition}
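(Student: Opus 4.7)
The plan is to prove (a), (b), (c) in that order, with essentially the same matrix Laplace-transform computation driving both (a) and (b).

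For (a), fix $t\ge 0$ and introduce the truncated matrix $\bnu$ defined by $\nu^{i,j}(B)\defeq\mu^{i,j}(B\cap[0,t])$; it consists of \emph{finite} measures with $\bnu(\{0\})=\bmu(\{0\})$. A routine induction gives $\bnu^{*n}([0,t])=\bmu^{*n}([0,t])$ for all $n$, so it suffices to verify that the Markov renewal measure of $\bnu$ is finite at $t$. Since $\bnu$ is finite, $\cL\bnu(\theta)$ is well-defined for every $\theta\ge 0$, and dominated convergence gives $\cL\bnu(\theta)\to\bnu(\{0\})=\bmu(\{0\})$ entrywise as $\theta\to\infty$. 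Because the eigenvalues of a $p\times p$ matrix depend continuously on its entries, (A\ref{ass:subcritical instant offspring}) forces $\rho(\cL\bnu(\theta))<1$ for $\theta$ large, so the Neumann series
\begin{align*}
\sum_{n\ge 0}\cL\bnu(\theta)^n = (I-\cL\bnu(\theta))^{-1}
\end{align*}
has finite entries. Combining with the crude entrywise bound $\bnu^{*n}([0,t])\le e^{\theta t}\cL\bnu^{*n}(\theta)$ and summing over $n$ yields finiteness of each entry of $\bU(t)$.

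Part (b) is the same argument applied directly to $\bmu$: assumption (A\ref{ass:first moment}) provides the integrable dominator $e^{-\vartheta x}$ needed to pass $\theta\to\infty$ inside $\cL\bmu(\theta)$, so $\cL\bmu(\theta)\to\bmu(\{0\})$ and $\rho(\cL\bmu(\theta))<1$ for large $\theta$; the same Neumann series identifies $\cL\bU(\theta)=(I-\cL\bmu(\theta))^{-1}$ as finite. The decay $U^{i,j}(t)=o(e^{\theta t})$ follows from a standard tail estimate: given $\varepsilon>0$, choose $T$ with $\int_T^{\infty}e^{-\theta s}\,dU^{i,j}(s)<\varepsilon$; for $t>T$ one then has $e^{-\theta t}(U^{i,j}(t)-U^{i,j}(T))<\varepsilon$, and the claim follows on letting $t\to\infty$.

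For (c), existence of the candidate $F\defeq\bU*f$ is immediate from $\bmu*\bU=\bU-I\delta_0$: one checks $\bmu*F=(\bU-I\delta_0)*f=F-f$, and local boundedness of $F$ is automatic from (a) and that of $f$. Uniqueness is obtained by iteration: any locally bounded $G$ with $G=\bmu*G$ also satisfies $G=\bmu^{*n}*G$ for every $n$, so for each fixed $t\ge 0$,
\begin{align*}
|G_i(t)|\le\sum_{j=1}^{p}(\bmu^{*n})^{i,j}(t)\sup_{s\in[0,t]}|G_j(s)|\longrightarrow 0\quad\text{as }n\to\infty,
\end{align*}
since $(\bmu^{*n})^{i,j}(t)$ is the general term of the convergent series $U^{i,j}(t)$ from (a). The main subtlety is that (a) does not assume (A\ref{ass:first moment}), so the Laplace transform of $\bmu$ itself may not exist; the truncation to $\bnu$ sidesteps this cleanly. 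One must also invoke continuity of the spectral radius only through continuity of individual eigenvalues, since Perron-Frobenius is unavailable in the reducible case.
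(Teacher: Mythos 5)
Your proof is correct and follows essentially the same route as the paper: part (a) via truncation of $\bmu$ to $[0,t]$ and a Neumann-series bound, part (b) via $\rho(\cL\bmu(\theta))\to\rho_{\bmu(0)}<1$ as $\theta\to\infty$ together with a Laplace-transform tail estimate, and part (c) via the standard iteration $G=\bmu^{*n}*G\to 0$, which the paper simply delegates to Crump's proof. The only convention worth making explicit is that uniqueness in (c) is understood among locally bounded solutions vanishing on the negative half-line, which your estimate $|G_i(t)|\le\sum_{j}(\bmu^{*n})^{i,j}(t)\sup_{s\in[0,t]}|G_j(s)|$ implicitly uses.
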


The result is classical in the case univariate case $p=1$ and covered by textbooks,
we refer, e.g.\ to \cite[Theorem 3.5.1]{Resnick:1992}.
The multivariate case is covered, e.g., in \cite{Crump:1970a} in the irreducible case.
The reducible case is somewhat implicitly contained in \cite{Crump:1970b}.
For the reader's convenience, we sketch the proof in Section \ref{subsec:existence and uniqueness} below.

The goal of the paper is to derive an asymptotic expansion of the solution $F$ to \eqref{eq:Markov renewal}
under appropriate assumptions on $f$ and $\bmu$.
One of the prime examples for solving a Markov renewal equation is,
of course, the renewal measure $\bU$ itself.
Indeed,
\begin{equation}	\label{eq:renewal for U}
\bU(t) = \sum_{n=0}^\infty \bmu^{*n}(t) = \bmu^{*0}(t) + \sum_{n=1}^\infty \bmu^{*n}(t) = \Ip \1_{[0,\infty)}(t) + \bmu*\bU(t)
\end{equation}
for any $t \geq 0$ where $\Ip$ denotes the $p \times p$ identity matrix.
The starting point for the derivation of an asymptotic expansion for $F$ will be such an expansion for $\bU$.

\subsubsection*{The lattice type}
Throughout this work, we consistently distinguish between two cases:
\begin{itemize}
	\item
		The \emph{lattice case}, in which, for some $h>0$,
		the measures $\mu^{i,j}$, $i,j \in \p$ are concentrated on $h\Z$.
	\item
		The \emph{non-lattice case}, in which for all $h>0$ and $h_1,...,h_p\in[0,h)$,
		$\mu^{i,j}((h_j-h_i+h\Z)^\comp)>0$ is satisfied for some $i,j\in\p$.
\end{itemize}

In the lattice case, we assume without loss of generality that $h=1$ is maximal
with the property that all the $\mu^{i,j}$, $i,j \in \p$ are concentrated on $h\Z$.
The general lattice case can be reduced to this one by appropriate scaling.

\subsubsection*{Roots of the characteristic equation $\det(\Ip - \cL\bmu(\lambda)) = 0$}

The asymptotic expansion of the solution $F(t)$ to \eqref{eq:Markov renewal} as $t \to \infty$
is closely related to the following
\emph{characteristic equation}:
\begin{align}	\label{eq:characteristic equation}
\det(\Ip - \cL\bmu(\lambda)) = 0,
\end{align}
where $\Ip$ denotes the $p \times p$ identity matrix and $\lambda \in \Dom(\cL\bmu)$.
Define
\begin{align*}
\Lambda \defeq \{\lambda \in \Dom(\cL\bmu): \det(\Ip - \cL\bmu(\lambda)) = 0\}
\end{align*}
in the non-lattice case, and $\Lambda \defeq \{\lambda = \theta + \imag \eta \in \Dom(\cL\bmu): -\pi < \eta \leq \pi \text{ and } \det(\Ip - \cL\bmu(\lambda)) = 0\}$ in the lattice case.
Notice that $\Lambda$ is a closed subset of $\Dom(\cL\bmu)$ since the determinant is a continuous functional
on $\C^{p \times p}$ equipped with the matrix norm.

Whenever $\det(\Ip-\cL\bmu(z)) \not = 0$,
we can invert $\Ip-\cL\bmu(z)$ using the formula
\begin{align}	\label{eq:inverse via adjoint}
(\Ip-\cL\bmu (z))^{-1}=\frac{1}{\det(\Ip-\cL\bmu(z))}\; \adj(\Ip-\cL\bmu(z)),
\end{align}
where $\adj(A)$ denotes the adjoint of the matrix $A$.
Therefore, each entry of $(\Ip-\cL\bmu(z))^{-1}$ has a pole with multiplicity at most
the multiplicity of the zero of $\det(\Ip-\cL\bmu(z))$ at $z=\lambda$.
Write $\k(\lambda)\in\N$ for the maximal multiplicity of the poles
of the entries of $(\Ip-\cL\bmu (z))^{-1}$ at the point $\lambda\in\Lambda$,
that is, $\k(\lambda)$ is the smallest natural number
such that all entries of $(z-\lambda)^{\k(\lambda)}(\Ip-\cL\bmu (z))^{-1}$ are holomorphic in some neighborhood of $\lambda$.
For completeness, we set $\k(z) = 0$ for $z \not \in \Lambda$.

We call a root $\alpha \in \Lambda \cap \R$ with $\rho_{\cL\bmu(\alpha)}=1$
\emph{Malthusian parameter} as it represents the maximum exponential growth rate of the expected population size
when applying the main result to branching processes. 
Provided that (A\ref{ass:first moment}) and (A\ref{ass:subcritical instant offspring}) are satisfied,
the existence of a Malthusian parameter will be guaranteed by the following assumption:
\begin{enumerate}[{\bf{(A}1)}]
	\setcounter{enumi}{\value{assumptions}}
	\item
	There exists $\theta \in \Dom(\cL\bmu) \cap \R$ with $1 \leq \rho_{\cL\bmu(\theta)} < \infty$.
	\label{ass:Lmu(vartheta) geq 1}
\setcounter{assumptions}{\value{enumi}}
\end{enumerate}

The following proposition establishes some fundamental properties of the set $\Lambda$:

\begin{proposition}	\label{Prop:largest root}
Suppose that (A\ref{ass:first moment}) and (A\ref{ass:subcritical instant offspring}) hold.
\begin{enumerate}[(a)]
	\item
		There exists at most one $\alpha \in \R$ with $\rho_{\cL\bmu(\alpha)}=1$.
	\item
		If (A\ref{ass:Lmu(vartheta) geq 1}) holds, then there exists a Malthusian parameter $\alpha$
		and $\Real(\lambda) \leq \alpha$ for every $\lambda \in \Lambda$.
	\item
		Condition (A\ref{ass:Lmu(vartheta) geq 1}) is necessary for $\Lambda$ to be non-empty.
	\item
		In the primitive case, if $\alpha \in \interior{\Dom(\cL\bmu)}$,
		then $\alpha$ is a pole of order one of $(\Ip-\cL\bmu(z))^{-1}$.
		Further, $\Real(\lambda) < \alpha$ for every $\lambda \in \Lambda \setminus \{\alpha\}$.
\end{enumerate}
\end{proposition}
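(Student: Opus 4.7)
My approach for all four parts is to reduce to Perron--Frobenius theory for the nonnegative matrices $\cL\bmu(\theta)$, exploiting entrywise monotonicity $\theta_1 \leq \theta_2 \Rightarrow \cL\bmu(\theta_1) \geq \cL\bmu(\theta_2)$ on the real line. The technical workhorse I will rely on repeatedly is this: if $\lambda = \theta + \imag \eta \in \Lambda$, then $1$ is an eigenvalue of $\cL\bmu(\lambda)$, so there exists $v \in \C^p \setminus \{0\}$ with $\cL\bmu(\lambda) v = v$; taking entrywise moduli and applying the triangle inequality yields $|v| \leq \cL\bmu(\theta)|v|$, and iterating together with \eqref{eq:Gelfand} forces $\rho_{\cL\bmu(\theta)} \geq 1$. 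Part (c) is then immediate: any $\lambda \in \Lambda$ provides a $\theta = \Real(\lambda) \in \Dom(\cL\bmu) \cap \R$ witnessing (A\ref{ass:Lmu(vartheta) geq 1}).

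For (b), I combine this lemma with the observation that $\theta \mapsto \rho_{\cL\bmu(\theta)}$ is continuous and non-increasing on $\Dom(\cL\bmu) \cap \R$, tends to $\rho_{\bmu(0)} < 1$ as $\theta \to \infty$ by dominated convergence, while (A\ref{ass:Lmu(vartheta) geq 1}) forces it to be at least $1$ somewhere. The intermediate value theorem then delivers a Malthusian parameter $\alpha$, which lies in $\Lambda$ because $1$ is an eigenvalue of the nonnegative matrix $\cL\bmu(\alpha)$, and the bound $\Real(\lambda) \leq \alpha$ for $\lambda \in \Lambda$ follows from $\rho_{\cL\bmu(\Real(\lambda))} \geq 1 = \rho_{\cL\bmu(\alpha)}$ combined with monotonicity.

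For (a), my plan is to place $\widetilde\bmu$ into block upper triangular form along its strongly connected components, so that $\rho_{\cL\bmu(\theta)} = \max_J \rho_{\cL\bmu_J(\theta)}$ with $J$ ranging over the irreducible diagonal blocks. Each $\cL\bmu_J(\theta)$ has the same zero pattern as $\bmu_J$ and is hence itself irreducible; for each block, either every $\mu^{i,j}$ with $i,j \in J$ sits at $\{0\}$---in which case $\cL\bmu_J(\theta) = \bmu_J(0)$ is a principal submatrix of $\bmu(0)$ whose spectral radius stays strictly below $1$ by (A\ref{ass:subcritical instant offspring})---or some entry of $\cL\bmu_J(\theta)$ is strictly decreasing, and strict Perron monotonicity for irreducible nonnegative matrices forces $\rho_{\cL\bmu_J(\theta)}$ itself to be strictly decreasing. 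Either way each block attains $\rho = 1$ at most once, and if $\alpha_1 < \alpha_2$ both satisfied $\rho_{\cL\bmu(\alpha_k)} = 1$, the block responsible at $\alpha_2$ would strictly exceed $1$ at $\alpha_1$, a contradiction.

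Part (d) needs more work. In the primitive case, $\cL\bmu(\alpha)$ is itself a primitive matrix with Perron eigenvalue $1$, which is algebraically simple with strictly positive right and left eigenvectors $v, w$, so $\adj(\Ip - \cL\bmu(\alpha)) = \gamma\, v w^\transp$ for some $\gamma \neq 0$. Jacobi's derivative identity then gives
\[
\frac{d}{dz}\det(\Ip - \cL\bmu(z))\Big|_{z=\alpha} = \mathrm{tr}\bigl(\adj(\Ip - \cL\bmu(\alpha))\, \cL(x\bmu)(\alpha)\bigr) = \gamma\, w^\transp \cL(x\bmu)(\alpha)\, v \neq 0,
\]
where non-vanishing uses $v, w > 0$ together with $\cL(x\bmu)(\alpha) \neq 0$ (otherwise $\bmu$ would be concentrated at $\{0\}$, violating $1 = \rho_{\cL\bmu(\alpha)} > \rho_{\bmu(0)}$). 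Combined with $\adj(\Ip - \cL\bmu(\alpha)) \neq 0$, formula \eqref{eq:inverse via adjoint} yields $\k(\alpha) = 1$. The strict inequality $\Real(\lambda) < \alpha$ for $\lambda \in \Lambda \setminus \{\alpha\}$ is the part I expect to demand the most care: if $\Real(\lambda) = \alpha$, the equality case of $|v| \leq \cL\bmu(\alpha)|v|$ combined with primitivity forces $|v|$ to be a positive multiple of the Perron vector, and then equality in the triangle inequalities forces $e^{-\imag \eta x}$ to be constant on the support of every $\mu^{i,j}$, with $\eta = \Imag(\lambda)$. The non-lattice condition directly excludes $\eta \neq 0$, while in the lattice case with maximal span $h = 1$ any $\eta \in (-\pi, \pi] \setminus \{0\}$ would concentrate each $\mu^{i,j}$ on a proper coset of $(2\pi/|\eta|)\Z$, contradicting the maximality of $h$. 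Making this phase-cancellation rigidity argument uniform across both lattice types is the technical crux.
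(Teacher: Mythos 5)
Your proposal is correct and covers all four parts, but it reaches (a) and the first claim of (d) by routes genuinely different from the paper's. For (a), the paper does not decompose into strongly connected components: it invokes logarithmic convexity of $\theta \mapsto \varrho(\theta) = \rho_{\cL\bmu(\theta)}$ (Kingman's theorem, packaged in Proposition \ref{Prop:properties of varrho}), which together with $\varrho(\theta) \to \rho_{\bmu(0)} < 1$ forces $\varrho$ to be strictly decreasing wherever $\varrho > \rho_{\bmu(0)}$; your Frobenius-normal-form argument with strict Perron monotonicity on each irreducible block buys the same conclusion more elementarily, at the cost of checking the block dichotomy (constant blocks are principal submatrices of $\bmu(0)$ with spectral radius below $1$, non-constant irreducible blocks are strictly monotone). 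For the simple pole in (d), the paper bounds $(\theta - \alpha)(\Ip - \cL\bmu(\theta))^{-1}y$ as $\theta \downarrow \alpha$ using the Perron eigenvector $v_\theta$, the geometric series, and convexity of $\varrho$ to control $(\theta-\alpha)/(1-\varrho(\theta))$; your Jacobi-formula computation $\tfrac{d}{dz}\det(\Ip - \cL\bmu(z))\big|_{z=\alpha} = \gamma\, w^\transp \cL(x\bmu)(\alpha)\, v \neq 0$ is more algebraic, yields the strictly stronger statement that the determinant itself has a simple zero at $\alpha$, and uses $\alpha \in \interior{\Dom(\cL\bmu)}$ exactly where the hypothesis supplies it. Parts (b), (c) and the phase-rigidity half of (d) coincide with the paper's arguments (your eigenvector-modulus workhorse is the paper's Lemma \ref{Lem:rho is monotone} combined with Proposition \ref{Prop:properties of varrho}(d)). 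One caveat you share with the paper: in the lattice branch of (d), concentration of each $\mu^{i,j}$ on a \emph{shifted} coset $h_j - h_i + (2\pi/|\eta|)\Z$ does not literally contradict maximality of the unshifted span $h=1$; closing this requires the usual cycle-sum argument (the shifts cancel along cycles of the primitive type graph, so the group generated by cycle sums is $\Z$ and forces $\eta \in 2\pi\Z$), which neither you nor the paper spells out --- though you at least flag it explicitly as the technical crux.
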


In Proposition \ref{Prop:largest root}, we say that $\alpha$ is a pole of order one of $(\Ip-\cL\bmu(z))^{-1}$.
By this we mean that all entries of $(\Ip-\cL\bmu(z))^{-1}$ are either holomorphic at $z=\alpha$
or have a pole of order at most one while there is at least one entry of $(\Ip-\cL\bmu(z))^{-1}$
that has a pole of order one at $\alpha$.
More generally, if $A(z)$ is a matrix of meromorphic functions, we say that $A$ has a pole
of order $n \in \N$ at $\lambda$ if all entries of $A$ are either holomorphic at $\lambda$
or have a pole of order at most $n$ while there is at least one entry that has a pole of order exactly $n$. 
The proof of the proposition is postponed to Section \ref{subsec:varrho}.

Finally, for $\theta \in \R$, we define
\begin{align*}
\Lambda_\theta \defeq \{\lambda \in \Lambda: \Real(\lambda) > \theta\}.
\end{align*}

\subsection{Asymptotic expansions}

\subsubsection*{The non-lattice case}

We first consider the non-lattice case. More precisely, we work using the assumption
\begin{align}	\label{eq:sup||(Ip-Lmu(theta+ieta)^-1||<infty}
\sup_{\eta\in \R} \|(\Ip-\cL\bmu(\vartheta+\imag\eta))^{-1}\| < \infty
\end{align}
and some $\vartheta \in \interior{\Dom(\cL\bmu)} \cap \R$.
Notice that \eqref{eq:sup||(Ip-Lmu(theta+ieta)^-1||<infty} implies that there is no $\lambda \in \Lambda$ with $\Real(\lambda)=\vartheta$.
We assume without loss of generality that $\vartheta$ in \eqref{eq:sup||(Ip-Lmu(theta+ieta)^-1||<infty}
and (A\ref{ass:first moment}) coincide.

Notice that \eqref{eq:sup||(Ip-Lmu(theta+ieta)^-1||<infty} implies $\Real(\lambda) \not = \vartheta$ for every $\lambda \in \Lambda$.
The condition may seem technical at first glance.
However, we discuss it following our main result, Theorem \ref{Thm:Markov renewal theorem F(t) non-lattice} below,
and relate it in particular to the assumptions of Sgibnev \cite{Sgibnev:2002}.
The following expansion for the renewal function $t \mapsto \bU(t) = \sum_{n=0}^\infty \bmu^{*n}(t)$
is the starting point for all further investigations.

\begin{theorem}	\label{Thm:Markov renewal theorem U(t) non-lattice}
Let $\bmu=(\mu^{i,j})_{i,j\in\p}$ be a $p \times p$ matrix
of locally finite measures on $[0,\infty)$ satisfying (A\ref{ass:first moment}), (A\ref{ass:subcritical instant offspring}) and
\eqref{eq:sup||(Ip-Lmu(theta+ieta)^-1||<infty}.
Further, suppose that $\Lambda_\vartheta$ is finite.
Then there exist matrices $C_{\lambda,k} \in \R^{p \times p}$, $k=0,\ldots,\k(\lambda)-1$, $\lambda \in \Lambda_\vartheta$,
$C_{0,\k(0)}$ such that
\begin{equation}	\label{eq:asymptotic expansion U(t) non-lattice}
\bU(t)
= \sum_{\lambda\in\Lambda_{\vartheta}}e^{\lambda  t} \sum_{k=0}^{\k(\lambda)-1} \!\!\! t^k C_{\lambda, k}
+ t^{\k(0)} C_{0,\k(0)} + O(t e^{\vartheta t})
\quad	\text{as } t \to \infty,
\end{equation}
where the error bound $O(t e^{\vartheta t})$ as $t \to \infty$ applies entry by entry.
\end{theorem}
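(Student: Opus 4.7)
My plan is a Laplace-inversion and contour-deformation argument: recover $\bU(t)$ from its Laplace transform as a matrix-valued function, shift the inversion contour leftward from $\Real z = c$ (for some large $c$) down to $\Real z = \vartheta$, and identify the main part in \eqref{eq:asymptotic expansion U(t) non-lattice} as the sum of residues crossed, while bounding what remains as the error term of order $te^{\vartheta t}$.

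\textbf{Setup and principal-part decomposition.} By Proposition~\ref{Prop:existence and uniqueness MRE}(b), I would first choose $c\in\R$ so large that $\rho(\cL\bmu(c))<1$; the Neumann series then gives $\cL\bU(z) = (\Ip-\cL\bmu(z))^{-1}$ on $\{\Real z\geq c\}$ and $U^{i,j}(t) = o(e^{ct})$. Consequently, the Laplace transform of the matrix-valued function $t\mapsto\bU(t)$ satisfies
\[
\hat\bU(z) \;=\; \int_0^\infty e^{-zt}\bU(t)\, dt \;=\; \tfrac{1}{z}\bigl(\Ip-\cL\bmu(z)\bigr)^{-1}, \qquad \Real z > c.
\]
Each $\lambda\in\Lambda_\vartheta$ is an isolated pole of $(\Ip-\cL\bmu(z))^{-1}$ of order $\k(\lambda)$, while the prefactor $z^{-1}$ contributes an additional factor at $z=0$. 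I would let $\hat G(z)$ be the sum of the principal parts of $\hat\bU(z)$ at all such poles in $\{\Real z\geq\vartheta\}$; the matrices $C_{\lambda,k}$ of the proposed expansion are then simply read off from the corresponding Laurent coefficients, and the isolated summand $t^{\k(0)}C_{0,\k(0)}$ captures the top-order contribution from $z=0$, whose order in $\hat\bU$ is $\k(0)+1$.

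\textbf{Contour shift.} Setting $\hat R := \hat\bU - \hat G$, by construction $\hat R$ extends holomorphically to an open neighborhood of $\{\Real z\geq\vartheta\}$, while assumption \eqref{eq:sup||(Ip-Lmu(theta+ieta)^-1||<infty} combined with the $z^{-1}$ factor yields $\|\hat R(\vartheta+\imag\eta)\| = O(|\eta|^{-1})$ as $|\eta|\to\infty$. Componentwise monotonicity of $\bU$ makes it of locally bounded variation, so the BV version of the Laplace inversion formula
\[
\bU(t) \;=\; \lim_{T\to\infty}\tfrac{1}{2\pi\imag}\int_{c-\imag T}^{c+\imag T} e^{zt}\hat\bU(z)\, dz
\]
holds at every continuity point. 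I would apply Cauchy's residue theorem on the rectangles $[\vartheta,c]\times[-T,T]$ and let $T\to\infty$, verifying that the horizontal segments contribute $o(1)$ through a Riemann--Lebesgue-type estimate on $\cL\bmu(\theta+\imag\eta)$ uniform for $\theta$ in a compact interval (standard in the non-lattice case after separating any discrete part of $\bmu$). This produces the identity $\bU(t) = G(t) + \Delta(t)$, where $\Delta(t) = \lim_{T\to\infty}\tfrac{1}{2\pi\imag}\int_{\vartheta-\imag T}^{\vartheta+\imag T}e^{zt}\hat R(z)\, dz$.

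\textbf{Main obstacle.} The delicate step is bounding $\Delta(t) = O(te^{\vartheta t})$. Because $\hat R(\vartheta+\imag\eta) = O(|\eta|^{-1})$ is only borderline non-integrable, $\Delta$ is defined merely as a principal-value integral and a direct dominated bound is unavailable. I would regularize by a single time-integration: the Laplace transform of $\int_0^t\Delta(s)\, ds$ equals $z^{-1}\hat R(z) = O(|\eta|^{-2})$ on $\Real z = \vartheta$, which \emph{is} absolutely integrable, and dominated inversion then yields the clean bound $\int_0^t\Delta(s)\, ds = O(e^{\vartheta t})$. To transfer this back to a pointwise bound on $\Delta(t) = \bU(t) - G(t)$, I would exploit componentwise monotonicity of $\bU$: the sandwich $h\bU(t-h)\leq\int_{t-h}^t\bU(s)\, ds\leq h\bU(t)$ converts the integrated bound into a pointwise one up to a smoothness error coming from $G$, and a careful optimization over the step size $h$ yields the claimed $O(te^{\vartheta t})$ estimate, with the factor of $t$ absorbing the trade-off. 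The two most intricate points in the whole argument are this monotonicity--optimization step and the algebraic bookkeeping around $z=0$, which must be arranged so that the Laurent expansion of $z^{-1}(\Ip-\cL\bmu(z))^{-1}$ produces the isolated summand $t^{\k(0)}C_{0,\k(0)}$ in the correct form.
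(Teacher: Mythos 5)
Your overall architecture coincides with the paper's: Bromwich inversion of a transform of the form $(\text{multiplier})\cdot(\Ip-\cL\bmu(z))^{-1}$, a contour shift from $\Real z=c$ to $\Real z=\vartheta$ whose crossed residues produce the polynomial-exponential main term (including the bookkeeping at $z=0$), and a monotonicity sandwich to pass from a locally averaged version of $\bU$ back to $\bU$ itself. The paper implements the averaging up front by convolving with $\tfrac1\varepsilon\1_{[0,\varepsilon]}$, which is literally your $\tfrac1h\int_{t-h}^t$, so the two proofs are structurally the same.

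However, your error-transfer step contains a genuine quantitative gap. You propose to bound the averaged remainder by first establishing $\Phi(t):=\int_0^t\Delta(s)\,\ds=O(e^{\vartheta t})$ and then writing $\tfrac1h\int_{t-h}^t\Delta(s)\,\ds=\tfrac1h(\Phi(t)-\Phi(t-h))=O(h^{-1}e^{\vartheta t})$. On the other side of the sandwich, replacing $\tfrac1h\int_{t-h}^tG(s)\,\ds$ by $G(t)$ costs $O(h\,e^{\alpha t}t^{\k(\alpha)-1})$ with $\alpha=\max_{\lambda\in\Lambda_\vartheta}\Real\lambda>\vartheta$. Balancing $h^{-1}e^{\vartheta t}$ against $h\,e^{\alpha t}$ gives, for the optimal $h$, an error of order $e^{(\vartheta+\alpha)t/2}$ up to polynomial factors — strictly worse than the claimed $O(te^{\vartheta t})$, since $(\vartheta+\alpha)/2>\vartheta$. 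No choice of $h$ repairs this; the crude bound $|\Phi(t)-\Phi(t-h)|\le 2\sup|\Phi|$ throws away the cancellation you need. The correct estimate (and the one the paper uses) bounds the averaged remainder directly as a contour integral over $\Real z=\vartheta$ of $e^{zt}\,\tfrac{1-e^{-hz}}{hz}\,\hat R(z)$: the multiplier is $O(\min(1,(h|z|)^{-1}))$, so against $\hat R(z)=O(|z|^{-1})$ the integral is $O((|\log h|+1)e^{\vartheta t})$ — logarithmic, not polynomial, in $1/h$. Only then does $h=e^{-\sigma t}$ make the smoothing error on the main term exponentially negligible while the remainder contributes $O(te^{\vartheta t})$, the factor $t$ being exactly $|\log h|$. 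Two smaller points: the horizontal segments of the rectangle should be controlled via the uniform bound $\sup_{\theta\ge\vartheta,|\eta|\ge\eta_0}\|(\Ip-\cL\bmu(\theta+\imag\eta))^{-1}\|<\infty$ (condition (\ref{condition:F}) of Lemma \ref{Lem:comparison of conditions}, which follows from \eqref{eq:sup||(Ip-Lmu(theta+ieta)^-1||<infty} and the finiteness of $\Lambda_\vartheta$) together with the $|z|^{-1}$ prefactor, not by a Riemann--Lebesgue argument — the hypotheses do not make $\cL\bmu_{\mathsf s}$ small, so Riemann--Lebesgue is unavailable for a general $\bmu$; and when $\vartheta\le0$ your extra factor $z^{-1}$ in $z^{-1}\hat R(z)$ creates a pole at $z=0$ whose constant contribution to $\Phi$ is not $O(e^{\vartheta t})$ (it cancels in $\Phi(t)-\Phi(t-h)$, but this must be said, or one reduces to $\vartheta>0$ by exponential tilting as the paper does).
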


\begin{remark}	\label{Rem:C_lambda,k}
We can give formulae for the matrices $C_{\lambda,k}$.
Indeed, fix some $\lambda \in \Lambda$ with $\Real(\lambda) > 0$.
Recall that $\k(\lambda)\in\N$ is the maximal multiplicity of the poles
of the entries of $(\Ip-\cL\bmu (z))^{-1}$ at the point $\lambda\in\Lambda$, so we may write
\begin{equation}	\label{eq:expansion of I_p-Lmu inverse at lambda}
(\Ip-\cL\bmu (z))^{-1} = A_{\lambda,\k(\lambda)} (z-\lambda)^{-\k(\lambda)} + \ldots + A_{\lambda,1} (z-\lambda)^{-1} + H_\lambda(z)
\end{equation}
for all $z \not = \lambda$ in a small disc $D(\lambda)$ centered at $\lambda$. Here, $H_\lambda(z)$ is holomorphic in $D(\lambda)$
and $A_{\lambda,\k(\lambda)}, \ldots, A_{\lambda,1}$ are complex $p \times p$ matrices.
Further, $A_{\lambda,\k(\lambda)}$ has at least one non-zero entry. Then
\begin{equation*}
C_{\lambda,k}
= \frac1{k!} \sum_{n=0}^{\k(\lambda)-1-k} \frac{(-1)^{n}}{n!\lambda^{n+1}} A_{\lambda,n+k+1}.
\end{equation*}
\end{remark}

From Theorem \ref{Thm:Markov renewal theorem U(t) non-lattice}, using essentially integration by parts, we can derive an expansion
for solutions to general Markov renewal equations of the form \eqref{eq:Markov renewal}.
Before we can formulate this result, some notation needs to be introduced.
For a function $f:\R \mapsto \R$ we define the total variation function $\mathrm{V}\!f$ by
\begin{equation}	\label{eq:def of Vf}
\mathrm{V}\!f(x) \defeq \sup\bigg\{\sum_{j=1}^n |f(x_j)-f(x_{j-1})|: -\infty < x_0 < \ldots < x_n \leq x,\ n \in \N \bigg\}
\end{equation}
for $x \in \R$. If $f = (f_1,\ldots,f_p)^\transp$ is vector-valued, we write $\mathrm{V}\!f$ for the vector $(\mathrm{V}\!f_1,\ldots,\mathrm{V}\!f_p)^\transp$.
 
\begin{theorem} \label{Thm:Markov renewal theorem F(t) non-lattice}
Let $\bmu=(\mu^{i,j})_{i,j\in\p}$ be a $p \times p$ matrix
of locally finite measures on $[0,\infty)$ satisfying (A\ref{ass:first moment}), (A\ref{ass:subcritical instant offspring}) and
\eqref{eq:sup||(Ip-Lmu(theta+ieta)^-1||<infty}. Further, suppose that $\Lambda_\vartheta$ is finite.
Let $f:\R\to\R^{p}$ be a column vector-valued right-continuous function with existing left limits
vanishing on the negative halfline with the property
\begin{align} \label{eq:int e^-theta x f(x) < infty}
\int_0^\infty  e^{-\theta x} \, \mathrm{V}\! f (x)  \, \dx < \infty
\end{align}
for some $\theta > \vartheta$ with $\Lambda_\vartheta=\Lambda_\theta$.
Then there exist matrices $B_{\lambda,k} \in \R^{p \times p}$, for $k=0,\ldots,\k(\lambda)-1$, $\lambda \in \Lambda_\vartheta$,
and vectors $b_{\lambda,k,f} \in \R^p$, such that
\begin{align}
F(t) &=
\sum_{\lambda\in\Lambda_{\theta}} \!e^{\lambda t} \!\! \sum_{k=0}^{\k(\lambda)-1} \!\!\! B_{\lambda, k} \! \int \limits_{0}^t \! f(x) (t-x)^k e^{-\lambda x} \, \dx
+ O(e^{\theta t})	\label{eq:expansion F(t) non-lattice}\\	
&{=\sum_{\lambda\in\Lambda_{\theta}} \!e^{\lambda t} \!\! \sum_{k=0}^{\k(\lambda)-1} \!\!\! B_{\lambda, k} \! \int \limits_{0}^{\infty} \! f(x) (t-x)^k e^{-\lambda x} \, \dx
+ O(e^{(\theta+\varepsilon) t})}	\label{eq:expansion F(t) non-lattice 2}\\
&= \sum_{\lambda\in\Lambda_{\theta}} e^{\lambda t} \!\! \sum_{k=0}^{\k(\lambda)-1} \!\!\! t^k b_{\lambda, k,f}
+  O(e^{(\theta+\varepsilon) t})	\nonumber
\end{align}
as $t \to \infty$, where the error bounds $O(e^{\theta t})$ and $O(e^{(\theta+\varepsilon) t})$ as $t \to \infty$ are to be read component by component
and where $\varepsilon>0$ can be chosen arbitrarily small, in particular, $\theta+\varepsilon < \Real(\lambda)$ for every $\lambda \in \Lambda_\theta$.
\end{theorem}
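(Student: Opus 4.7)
The plan is to substitute the expansion of $\bU$ from Theorem \ref{Thm:Markov renewal theorem U(t) non-lattice} into the representation $F = \bU * f$ of Proposition \ref{Prop:existence and uniqueness MRE}. First, I would rewrite the convolution more conveniently: since $f$ is càdlàg and vanishes on the negative half-line, we have $f(y) = \int_{[0,y]} df(s)$ for $y \geq 0$ (with $df$ carrying an atom of mass $f(0)$ at $0$), and Fubini's theorem yields
\begin{equation*}
F(t) = \bU * f(t) = \int_{[0,t]} \bU(t-s)\, df(s).
\end{equation*}

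Next, I would split $\bU(u) = \bU_{0}(u) + R(u)$, where $\bU_{0}$ denotes the polynomial-exponential main part in \eqref{eq:asymptotic expansion U(t) non-lattice} and $R(u) = O(ue^{\vartheta u})$. Substituting a summand $e^{\lambda u} u^k C_{\lambda,k}$ of $\bU_{0}$ into the above integral and performing an integration by parts converts $df(s)$ into $f(s)\,ds$ and produces, modulo boundary terms, a linear combination of integrals $e^{\lambda t}\int_0^t f(x)(t-x)^j e^{-\lambda x}\,dx$ with $j\leq k$; collecting all such contributions (including the extra $t^{\k(0)} C_{0,\k(0)}$ term, whose degree drops by one after integration by parts and which merges into the $\lambda = 0$ block) defines the matrices $B_{\lambda,k}$. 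For the remainder, $\theta > \vartheta$ yields $|R(u)| \leq K e^{\theta u}$ for some constant $K$; moreover, the assumption $\int_0^\infty e^{-\theta x}\mathrm{V}\!f(x)\,dx < \infty$ implies both $\mathrm{V}\!f(x) = o(e^{\theta x})$ and, via integration by parts, $\int_0^\infty e^{-\theta s}\,d\mathrm{V}\!f(s) < \infty$. Hence
\begin{equation*}
\Big| \int_{[0,t]} R(t-s)\,df(s) \Big| \leq K e^{\theta t}\int_0^t e^{-\theta s}\,d\mathrm{V}\!f(s) = O(e^{\theta t}),
\end{equation*}
and the boundary term $f(t)$ produced by integration by parts is also $O(e^{\theta t})$ because $|f(t)| = o(e^{\theta t})$. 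Combining these estimates establishes \eqref{eq:expansion F(t) non-lattice}.

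To pass from \eqref{eq:expansion F(t) non-lattice} to \eqref{eq:expansion F(t) non-lattice 2}, I would write $\int_0^t = \int_0^\infty - \int_t^\infty$; after the substitution $u = x-t$, the tail transforms into $(-1)^k \int_0^\infty f(u+t) u^k e^{-\lambda u}\,du$. Choosing $\varepsilon > 0$ so small that $\theta + \varepsilon < \Real(\lambda)$ for every $\lambda \in \Lambda_\theta$ (possible since $\Lambda_\theta$ is finite) and using $|f(u+t)| \leq C_\varepsilon e^{(\theta+\varepsilon)(u+t)}$ bounds the tail by $O(e^{(\theta+\varepsilon)t})$, yielding \eqref{eq:expansion F(t) non-lattice 2}. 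For the final polynomial form, I would expand $(t-x)^k = \sum_{j=0}^k \binom{k}{j} t^j (-x)^{k-j}$ inside the integrals of \eqref{eq:expansion F(t) non-lattice 2}; the coefficients $\int_0^\infty f(x) x^{k-j} e^{-\lambda x}\,dx$ then become $t$-independent vectors (convergent by the same growth bound on $f$), and collecting powers of $t$ produces the stated form $\sum_{\lambda\in\Lambda_\theta} e^{\lambda t} \sum_k t^k b_{\lambda,k,f}$.

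The main obstacle is the bookkeeping in the integration-by-parts step: one must track how derivatives of $e^{\lambda u} u^k$ recombine to yield the matrices $B_{\lambda,k}$, correctly handle the atom of $df$ at $0$ and the boundary term at $s=t$, and absorb the stray $t^{\k(0)} C_{0,\k(0)}$ contribution into the $\lambda = 0$ block of the final sum. The preliminary facts $\mathrm{V}\!f(x) = o(e^{\theta x})$ and $\int_0^\infty e^{-\theta s}\,d\mathrm{V}\!f(s) < \infty$ deduced from the integrability hypothesis on $\mathrm{V}\!f$ are routine but should be recorded as auxiliary lemmas.
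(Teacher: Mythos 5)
Your proposal is correct and follows essentially the same route as the paper: substitute the expansion of $\bU$ from Theorem \ref{Thm:Markov renewal theorem U(t) non-lattice} into $F=\bU*f$, integrate by parts to trade the Stieltjes measure of $f$ for $f(x)\,\dx$, control the remainder via the exponential moment of $\mathrm{V}\!f$, and then pass from $\int_0^t$ to $\int_0^\infty$ and expand $(t-x)^k$ binomially. The only cosmetic difference is that you work directly with the signed Lebesgue--Stieltjes measure $\mathrm{d}f$, whereas the paper first treats monotone $f$ (viewed as $\nu([0,\cdot])$) and then invokes the Jordan decomposition $f=f_+-f_-$; also note that for $\theta>0$ the term $t^{\k(0)}C_{0,\k(0)}$ is simply absorbed into $O(e^{\theta t})$ rather than into a $\lambda=0$ block.
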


\begin{remark}	\label{Rem:coefficients expansion for F(t) non-lattice}
The coefficients $B_{\lambda,k}$ and $b_{\lambda,k,f}$
in the expansions \eqref{eq:expansion F(t) non-lattice} and \eqref{eq:expansion F(t) non-lattice}
can be given explicitly in terms of moments of the function $f$ and the coefficients $C_{\lambda,k}$ from the asymptotic expansion of $\bU(t)$
given in Theorem \ref{Thm:Markov renewal theorem U(t) non-lattice} and Remark \ref{Rem:C_lambda,k}. 
In particular, if $\Real(\lambda)>0$,
\begin{align}
	\label{eq:B_lambda_k}
	B_{\lambda,k}=(k+1)C_{\lambda,k+1} + \lambda C_{\lambda,k}.
\end{align}
\end{remark}

\subsubsection*{The lattice case}
In the present subsection, we assume that the measures $\mu^{ij}$, $i,j \in \p$ forming the entries of the ($p\times p$)-matrix $\bmu$ are concentrated on $\N_0$.
In this case, we consider the discrete solution $F(\{n\})=\bU * f(\{n\})$ to the Markov renewal equation \eqref{eq:Markov renewal}.
To determine its asymptotic behavior, it is convenient to use generating functions rather than Laplace transforms.
We define the generating function of the $p\times p$ matrix $\bmu$ of measures as 
\begin{align}	\label{eq: generating function}
\cG\bmu(z) \defeq \sum_{n=0}^{\infty}\bmu(\{n\}) z^n
\end{align}
for all $z\in\C$ for which the series is absolutely convergent.
Notice that $\cG\bmu(e^{-z})=\cL\bmu(z)$ and $\cG\bmu(e^{-\vartheta})<\infty$ due to assumption (A\ref{ass:first moment}).
Consequently, the entries of the series \eqref{eq: generating function}
define holomorphic functions on an open disc that contains $\{|z| \leq e^{-\vartheta}\}$. 
There is the following relationship between the poles of $\cL\bmu$ and $\cG\bmu$.

\begin{lemma} \label{Lem: Multiplicities of lattice/non-lattice}
Suppose that (A\ref{ass:first moment}) and (A\ref{ass:subcritical instant offspring}) hold. If $z=\lambda$ is a pole of $(\Ip-\cL\bmu(z))^{-1}$ with multiplicity $\k(\lambda)$, then $z=e^{-\lambda}$ is pole of $(\Ip-\cG\bmu(z))^{-1}$ again with multiplicity $\k(\lambda)$.
\end{lemma}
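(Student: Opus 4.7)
The plan is to exploit the identity $\cG\bmu(e^{-z})=\cL\bmu(z)$, already recorded just before the lemma, which implies
\[
(\Ip-\cG\bmu(e^{-z}))^{-1} = (\Ip-\cL\bmu(z))^{-1}
\]
on any neighborhood of $\lambda$ in which both sides are defined. Setting $\varphi(z) \defeq e^{-z}$, this says that the matrix-valued meromorphic function $M(z) \defeq (\Ip-\cL\bmu(z))^{-1}$ is the pull-back under $\varphi$ of $N(w) \defeq (\Ip-\cG\bmu(w))^{-1}$. Since $\varphi'(\lambda) = -e^{-\lambda} \neq 0$, the map $\varphi$ is a local biholomorphism from a neighborhood of $\lambda$ onto a neighborhood of $e^{-\lambda}$, so I can reduce the claim to the standard fact that the order of a pole is preserved under composition with a local biholomorphism.

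To carry this out entry by entry, I would fix $i,j\in\p$ and consider the scalar meromorphic functions $M_{ij}(z)$ and $N_{ij}(w)$ with $M_{ij}(z)=N_{ij}(\varphi(z))$. Using the local expansion
\[
e^{-z}-e^{-\lambda} = -e^{-\lambda}(z-\lambda)\bigl(1 + O(z-\lambda)\bigr),
\]
if $N_{ij}$ has a pole of exact order $m\geq 0$ at $e^{-\lambda}$, then Laurent expansion shows $M_{ij}(z) = N_{ij}(\varphi(z))$ has a pole of exact order $m$ at $\lambda$. Conversely, writing $z=-\log w$ on a neighborhood of $e^{-\lambda}$ (using a holomorphic branch of $\log$ available since $e^{-\lambda}\neq 0$), the same local argument applied to $\varphi^{-1}$ shows that a pole of exact order $m$ of $M_{ij}$ at $\lambda$ corresponds to a pole of exact order $m$ of $N_{ij}$ at $e^{-\lambda}$.

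Finally, recall that $\k(\lambda)$ is defined as the maximum over $i,j\in\p$ of the pole orders of the entries of $M$ at $\lambda$. Since this pole order is preserved entry by entry by the previous step, the maximum is preserved as well, yielding the equality of multiplicities for $(\Ip-\cL\bmu(z))^{-1}$ at $\lambda$ and $(\Ip-\cG\bmu(w))^{-1}$ at $e^{-\lambda}$. There is no real obstacle here: the only subtlety is to keep in mind that the definition of multiplicity is a maximum taken over matrix entries, and this is automatically preserved because the change of variable acts entry by entry; assumptions (A\ref{ass:first moment}) and (A\ref{ass:subcritical instant offspring}) are used only to guarantee that the relevant neighborhoods exist and that both functions are well-defined meromorphic objects.
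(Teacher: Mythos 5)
Your proposal is correct and follows essentially the same route as the paper: both rest on the substitution $w=e^{-z}$ together with the fact that $(e^{-z}-e^{-\lambda})/(z-\lambda)\to -e^{-\lambda}\neq 0$, the paper phrasing this as a direct computation of $\lim_{z\to e^{-\lambda}}(z-e^{-\lambda})^{\k(\lambda)}(\Ip-\cG\bmu(z))^{-1}=A_{\lambda,\k(\lambda)}(-e^{-\lambda})^{\k(\lambda)}$, and you phrasing it as invariance of pole order under a local biholomorphism applied entry by entry. Your added remark that the entrywise maximum defining $\k(\lambda)$ is preserved is a correct and harmless elaboration of the same argument.
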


For $i,j\in \p$, we denote by $b^{i,j}_{\lambda,k}$ the coefficient of $(z-e^{-\lambda})^{-k}$ in the Laurent series of the $(i,j)$-th entry of
$(\Ip-\cG\bmu(z))^{-1}$ around $z=e^{-\lambda}$
and define $B_{\lambda,k} \defeq (b^{i,j}_{\lambda,k})_{i,j\in\p}$, $k=1,\ldots,\k(\lambda)-1$.
Note that the coefficients $(B_{\lambda,k})_{k=1,\ldots,\k(\lambda)-1}$
depend only on $(A_{\lambda,k})_{k=1,\ldots,\k(\lambda)-1}$ and $\lambda$.
With this notation,
\begin{equation}
(\Ip-\cG\bmu(z))^{-1}=\sum_{k=1}^{\k(\lambda)}B_{\lambda,k} (z-e^{-\lambda})^{-k}+H_\lambda(z),
\end{equation}
where $H_\lambda(z)$ is holomorphic in a neighborhood of $e^{-\lambda}$.

\begin{theorem}	\label{Thm:Markov renewal theorem F(n) lattice}
Let $\bmu=(\mu^{i,j})_{i,j\in\p}$ be a $p \times p$ matrix
of locally finite measures concentrated on $\N_0$ such that (A\ref{ass:first moment}) and (A\ref{ass:subcritical instant offspring}) hold.
Further, assume that there exists some $\theta > \vartheta$ such that $\Lambda_\vartheta = \Lambda_\theta$.
Then, for any characteristic $f:\N_0\to\R^p$ with
	\begin{equation}	\label{eq:sum e^-theta n f(n) < infty}
		\sum_{n=0}^\infty f(n) e^{-\theta n} < \infty
	\end{equation}
	there exist vectors $b_{\lambda,k,f} \in \R^{p}$,
$k=0,\ldots,\k(\lambda)-1$, $\lambda \in \Lambda_\theta$ such that 
\begin{equation}	\label{eq:asymptotic expansion N_t lattice}
F(n) = \sum_{\lambda\in\Lambda_\theta}e^{\lambda n} \sum_{k=0}^{\k(\lambda)-1} n^k b_{\lambda,k,f}+O(e^{\theta n})
\quad	\text{as } n \to \infty
\end{equation}
where $F$ is the unique locally bounded solution to the discrete Markov renewal equation \eqref{eq:Markov renewal}.
\end{theorem}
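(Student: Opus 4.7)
The plan is to mirror the strategy of Theorem~\ref{Thm:Markov renewal theorem F(t) non-lattice} but to work with generating functions instead of Laplace transforms: I would extract $F(n)$ from $\cG F$ by the Cauchy coefficient formula on a small circle around the origin and then deform this circle outward to $|z|=e^{-\theta}$, collecting residues at the poles coming from $\Lambda_\theta$ and controlling the tail integral on the outer circle.

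For the set-up, since $F=\bU*f$ and $\bU=\sum_{n\geq 0}\bmu^{*n}$, a Neumann-series computation on a small enough disc around $0$ yields
\begin{equation*}
\cG F(z) = (\Ip-\cG\bmu(z))^{-1}\,\cG f(z).
\end{equation*}
Assumption (A\ref{ass:subcritical instant offspring}) together with continuity of the spectral radius ensures that $\rho(\cG\bmu(z))<1$ for $|z|$ sufficiently small, so $(\Ip-\cG\bmu(z))^{-1}$ is holomorphic in a neighbourhood of the origin; meanwhile \eqref{eq:sum e^-theta n f(n) < infty} (interpreted componentwise in absolute value) gives uniform convergence of $\cG f$ on $\{|z|\leq e^{-\theta}\}$ and hence continuity there. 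Choosing $r>0$ so small that $\cG F$ is holomorphic in $\{0<|z|\leq r\}$, the Cauchy coefficient formula gives
\begin{equation*}
F(n) = \frac{1}{2\pi\imag}\oint_{|z|=r} \cG F(z)\, z^{-n-1}\,\dz.
\end{equation*}
I would then deform the contour to $|z|=e^{-\theta}$. By Lemma~\ref{Lem: Multiplicities of lattice/non-lattice} the poles of $(\Ip-\cG\bmu(z))^{-1}$ inside the annulus $r<|z|<e^{-\theta}$ are precisely the points $z_\lambda\defeq e^{-\lambda}$ with $\lambda\in\Lambda_\theta$; these are finitely many because $\det(\Ip-\cG\bmu(z))$ is holomorphic on $\{|z|<e^{-\vartheta}\}$, and the hypothesis $\Lambda_\vartheta=\Lambda_\theta$ ensures that no poles sit on the outer contour itself. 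The residue theorem then yields
\begin{equation*}
F(n) = \sum_{\lambda\in\Lambda_\theta}\mathrm{Res}_{z=z_\lambda}\!\bigl[\cG F(z)\,z^{-n-1}\bigr] + \frac{1}{2\pi\imag}\oint_{|z|=e^{-\theta}} \cG F(z)\,z^{-n-1}\,\dz.
\end{equation*}
Each residue is computed from the Laurent expansion $(\Ip-\cG\bmu(z))^{-1}=\sum_{k=1}^{\k(\lambda)}B_{\lambda,k}(z-z_\lambda)^{-k}+H_\lambda(z)$ stated just before the theorem, together with the Leibniz formula applied to $z\mapsto \cG f(z)z^{-n-1}$ at $z_\lambda$. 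Since the $j$-th derivative of $z^{-n-1}$ at $z_\lambda$ is a polynomial in $n$ of degree $j$ times $e^{\lambda(n+1+j)}$, each residue assembles into $e^{\lambda n}$ times an $\R^p$-valued polynomial in $n$ of degree at most $\k(\lambda)-1$; collecting these coefficients defines the vectors $b_{\lambda,k,f}$. Finally, on the compact contour $|z|=e^{-\theta}$ both $(\Ip-\cG\bmu(z))^{-1}$ and $\cG f(z)$ are continuous and hence bounded, so the tail integrand is dominated by a constant times $e^{\theta(n+1)}$ and the tail integral contributes $O(e^{\theta n})$.

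The main obstacle I anticipate is bookkeeping rather than analysis: one must unwind the Leibniz rule in each residue and check that the contributions from the $B_{\lambda,k}$ and from the derivatives of $\cG f$ at $z_\lambda$ organise themselves into the powers $n^0,\ldots,n^{\k(\lambda)-1}$ as claimed, and then read off the $b_{\lambda,k,f}$ in closed form. Compared with the non-lattice Theorem~\ref{Thm:Markov renewal theorem F(t) non-lattice}, the analytic side is in fact easier here: the contour is compact, so the uniform bound \eqref{eq:sup||(Ip-Lmu(theta+ieta)^-1||<infty} imposed in the non-lattice case becomes automatic and needs no separate assumption.
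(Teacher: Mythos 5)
Your proposal matches the paper's proof in all essentials: both start from the Cauchy coefficient formula $F(n)=\frac{1}{2\pi\imag}\oint_{|z|=r}(\Ip-\cG\bmu(z))^{-1}\cG f(z)\,z^{-n-1}\,\dz$ on a small circle, account for the finitely many poles $e^{-\lambda}$, $\lambda\in\Lambda_\theta$, lying between $|z|=r$ and $|z|=e^{-\theta}$ (the paper by splitting off the principal parts $G$ and reading off their Taylor coefficients at $z=0$, you by deforming the contour and taking residues at $e^{-\lambda}$ --- the same computation), and bound the remainder using boundedness of the holomorphic part on $\{|z|\le e^{-\theta}\}$. The only slip is a sign --- deforming the contour outward contributes $-2\pi\imag$ times each residue, not $+2\pi\imag$ --- but this is harmlessly absorbed into the unspecified vectors $b_{\lambda,k,f}$.
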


Theorem \ref{Thm:Markov renewal theorem F(n) lattice} is the lattice analog of Theorem \ref{Thm:Markov renewal theorem F(t) non-lattice}.
The proof of the latter is based on the asymptotic expansion of the renewal function $\bU(t)$.
In the lattice case, we do not take this detour.
Nevertheless, for the sake of completeness, we provide the expansion of the renewal density in the lattice case.

\begin{corollary}	\label{Cor:Markov renewal theorem lattice}
Let $\bmu=(\mu^{i,j})_{i,j\in\p}$ be a $p \times p$ matrix
of locally finite measures concentrated on $\N_0$ such that (A\ref{ass:first moment}) and (A\ref{ass:subcritical instant offspring}) hold.
Then, there exist matrices $C_{\lambda,k} \in \R^{p \times p}$,
$k=0,\ldots,\k(\lambda)-1$, $\lambda \in \Lambda_\vartheta$ such that
\begin{equation*}	
\bU(\{n\}) = \sum_{\lambda\in\Lambda_\theta} e^{\lambda n} \sum_{k=0}^{\k(\lambda)-1} n^k C_{\lambda,k}+O(e^{\vartheta n})
\quad	\text{as } n \to \infty.
\end{equation*}
\end{corollary}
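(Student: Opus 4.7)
The plan is to reduce the corollary to Theorem \ref{Thm:Markov renewal theorem F(n) lattice} by exhibiting $\bU$ itself as the solution to a family of Markov renewal equations. The discrete analog of \eqref{eq:renewal for U} reads $\bU(\{n\}) = \Ip \1_{\{0\}}(n) + (\bmu * \bU)(\{n\})$ for all $n \in \N_0$, so for each $j \in \p$, the $j$-th column $\bU(\{\cdot\}) \be_j$ is the unique locally bounded solution to the discrete Markov renewal equation \eqref{eq:Markov renewal} with characteristic $f_j(n) \defeq \be_j \1_{\{0\}}(n)$. This $f_j$ trivially satisfies the summability assumption \eqref{eq:sum e^-theta n f(n) < infty}, since $\sum_{n=0}^{\infty} f_j(n) e^{-\theta n} = \be_j < \infty$ for every $\theta \in \R$.

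To invoke Theorem \ref{Thm:Markov renewal theorem F(n) lattice}, I first need to produce a $\theta > \vartheta$ with $\Lambda_\vartheta = \Lambda_\theta$. Because the lattice convention restricts $\Imag \lambda \in (-\pi, \pi]$, the set $\Lambda_\vartheta$ consists of isolated zeros of the holomorphic function $z \mapsto \det(\Ip - \cL\bmu(z))$ inside the compact strip $\{\vartheta < \Real z \leq \alpha,\ -\pi < \Imag z \leq \pi\}$, hence is finite; consequently any $\theta > \vartheta$ sufficiently close to $\vartheta$ will satisfy $\Lambda_\vartheta = \Lambda_\theta$. Applying Theorem \ref{Thm:Markov renewal theorem F(n) lattice} to each $f_j$ with this choice of $\theta$ produces vectors $b^j_{\lambda,k} \in \R^p$ with
\[
\bU(\{n\}) \be_j = \sum_{\lambda \in \Lambda_\theta} e^{\lambda n} \sum_{k=0}^{\k(\lambda)-1} n^k b^j_{\lambda,k} + O(e^{\theta n}) \quad \text{as } n \to \infty.
\]
Stacking these column expansions and defining $C_{\lambda,k} \in \R^{p \times p}$ to be the matrix whose $j$-th column is $b^j_{\lambda,k}$ yields the claimed matrix expansion, with the error bound holding entrywise.

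No genuine obstacle arises, as the result is essentially a repackaging of Theorem \ref{Thm:Markov renewal theorem F(n) lattice}. The only points worth remarking on are (i) that the coefficients $b^j_{\lambda,k}$, being residues of $(\Ip - \cG\bmu(z))^{-1}$ at $z = e^{-\lambda}$ (see the expansion preceding Theorem \ref{Thm:Markov renewal theorem F(n) lattice}), do not depend on the auxiliary parameter $\theta$, so $C_{\lambda,k}$ is well defined; and (ii) that letting $\theta \downarrow \vartheta$ sharpens the error bound to $O(e^{\vartheta n})$ provided no root of the characteristic equation \eqref{eq:characteristic equation} lies on the vertical line $\Real \lambda = \vartheta$, which is the situation covered by the corollary.
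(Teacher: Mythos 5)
Your proof is correct and follows essentially the same route as the paper: both reduce the corollary column by column to Theorem \ref{Thm:Markov renewal theorem F(n) lattice}, viewing the $j$-th column of $\bU(\{\cdot\})$ as the solution of the discrete Markov renewal equation with characteristic $\e_j\1_{\{0\}}$ and stacking the resulting vector expansions into the matrices $C_{\lambda,k}$. The only cosmetic difference is how the error is brought down to $O(e^{\vartheta n})$: rather than ``letting $\theta \downarrow \vartheta$'' (the implied constants depend on $\theta$), the paper uses that $\vartheta$ is an interior point of $\Dom(\cL\bmu)$ to apply the theorem from a slightly smaller base point $\vartheta' < \vartheta$ with $\theta = \vartheta$.
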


\subsection{Discussion of the assumptions and comparison with the literature}	\label{subsec:literature}

We conclude this section with a discussion of the assumptions we use and,
in particular, a comparison with the assumptions of the work of Sgibnev \cite{Sgibnev:2002}.
Regarding the latter, notice that our assumption (A\ref{ass:subcritical instant offspring})
is also a standing assumption in \cite{Sgibnev:2002}.
Moreover, Sgibnev works in the non-lattice case only and assumes that $\cL\bmu(0)$ is irreducible.
We believe that these two conditions serve the purpose of convenience,
but are not fundamentally necessary for the method of proof he uses, see e.g.\ \cite[Remark 3]{Sgibnev:2002}
regarding the extension of his result to the lattice case.
Further, Sgibnev makes an assumption that is analogous to our assumption (A\ref{ass:first moment}),
namely, the finiteness of a $\varphi$-moment of $\bmu$ for some semimultiplicative function $\varphi$
such that $\varphi(x)e^{-r_+x}$ is nondecreasing on $[0,\infty)$ for some $r_+ \geq 0$.
This is close to validity of (A\ref{ass:first moment}) with $\vartheta = -r_0 \leq 0$, which is a quite strong moment assumption.
However, we think that Sgibnev's requirement that $\vartheta \leq 0$ in (A\ref{ass:first moment})
can be removed with moderate effort, for example with the help of an exponential tilting.

Finally, Sgibnev's main result  \cite[Theorem 3]{Sgibnev:2002} requires, in our notation, that the spectral radius of the matrix
$\cL(\bmu^{*m})_{\mathsf s}(\vartheta)$ is strictly smaller than $1$ for some convolution power $m \in \N$
where here and throughout this section, we write $\nu_{\mathsf s}$ for the singular part of a measure $\nu$
on the Borel sets of $[0,\infty)$ and extend this canonically to matrices of measures by applying the singular part entrywise.
This assumption, which appears in Lemma \ref{Lem:comparison of conditions} as Condition (\ref{condition:A}),
must be seen in comparison to our assumption
\begin{align*}	
\sup_{\eta\in\R} \|(\Ip-\cL\bmu(\vartheta+\imag\eta))^{-1}\| < \infty,
\end{align*}
which follows from Condition (\ref{condition:E}) from Lemma \ref{Lem:comparison of conditions} as long as $\Real(\lambda)\not=\vartheta$ for any $\lambda\in\Lambda$.
In a recent paper by Iksanov and the first two authors \cite{Iksanov+al:2024},
as well as a paper by Janson and Neininger \cite{Janson+Neininger:2008}, both of which deal with the single-type case,
yet another condition is used, namely, $\limsup_{\eta \to \infty} |\cL\bmu(\vartheta+\imag\eta)|<1$.
A slightly weaker (multi-type) version of which appears in Lemma \ref{Lem:comparison of conditions} as Condition (\ref{condition:C}).
The following lemma provides a comparison of all these and other conditions.

\begin{lemma}	\label{Lem:comparison of conditions}
Suppose that (A\ref{ass:first moment}) and (A\ref{ass:subcritical instant offspring}) hold. Consider the following conditions.
\begin{enumerate}[(A)]
		\item	There is an $m \in \N$ such that $\rho_{\cL(\bmu^{*m})_{\mathsf s}(\vartheta)}<1$.
		\label{condition:A}
		\item	There is an $m \in \N$ such that $\|\cL(\bmu^{*m})_{\mathsf s}(\vartheta)\|<1$.
		\label{condition:B}
		\item	There is an $m \in \N$ such that $\limsup_{\eta \to \infty} \|\cL\bmu(\vartheta+\imag\eta)^{m}\|<1$.
		\label{condition:C}
		\item	There is an $m \in \N$ such that $\limsup_{\eta \to \infty} \sup_{\theta \geq \vartheta} \|\cL\bmu(\theta+\imag\eta)^{m}\|<1$.
		\label{condition:D}
		\item	There is an $\eta_0 \geq 0$ such that $\sup_{\eta \geq \eta_0}\|(\Ip-\cL\bmu(\vartheta+\imag\eta))^{-1}\|<\infty$.	
		\label{condition:E}
		\item	There is $\eta_0 \geq 0$ such that $\sup_{\eta \geq \eta_0, \theta \geq \vartheta}\|(\Ip-\cL\bmu(\theta +\imag\eta))^{-1}\|<\infty$.
		\label{condition:F}
\end{enumerate}
Then (\ref{condition:A}) is equivalent to (\ref{condition:B}), (\ref{condition:B}) implies (\ref{condition:C}),
(\ref{condition:C}) is equivalent to (\ref{condition:D}), each of the conditions (\ref{condition:C}), (\ref{condition:D}) and (\ref{condition:F}) implies (\ref{condition:E}). If, additionally, $\Lambda_\vartheta$ is finite, then (\ref{condition:E}) and (\ref{condition:F}) are equivalent.
Finally, (\ref{condition:D}) implies that $\Lambda_\vartheta$ is finite.
\end{lemma}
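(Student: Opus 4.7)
The plan is to prove the listed implications in essentially the stated order, grouping the straightforward algebraic and Neumann-series steps together and isolating (\ref{condition:C}) $\Leftrightarrow$ (\ref{condition:D}), which is the technical heart of the lemma.

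First, (\ref{condition:A}) $\Leftrightarrow$ (\ref{condition:B}) follows from the observation that convolving an absolutely continuous measure with any finite measure yields an absolutely continuous measure, so $(\bmu^{*m})_{\mathsf s}=\bmu_{\mathsf s}^{*m}$ and hence $\cL(\bmu^{*m})_{\mathsf s}(\vartheta)=\cL\bmu_{\mathsf s}(\vartheta)^{m}$; Gelfand's formula \eqref{eq:Gelfand} applied to the non-negative matrix $\cL\bmu_{\mathsf s}(\vartheta)$ then gives the equivalence. For (\ref{condition:B}) $\Rightarrow$ (\ref{condition:D}) (and a fortiori (\ref{condition:B}) $\Rightarrow$ (\ref{condition:C})), decompose $\cL\bmu(\theta+\imag\eta)^{m}=\cL(\bmu^{*m})_{a}(\theta+\imag\eta)+\cL(\bmu^{*m})_{\mathsf s}(\theta+\imag\eta)$. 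The family $\{e^{-\theta x}g^{i,j}(x)\1_{[0,\infty)}(x):\theta\geq\vartheta\}\cup\{0\}$, where $g^{i,j}$ is the density of $(\bmu^{*m})_{a}^{i,j}$, is compact in $L^{1}(\R)$ (by continuity in $\theta$ and $L^{1}$-vanishing as $\theta\to\infty$ via dominated convergence), so an equi-Riemann--Lebesgue argument yields $\sup_{\theta\geq\vartheta}\|\cL(\bmu^{*m})_{a}(\theta+\imag\eta)\|\to 0$ as $|\eta|\to\infty$; entrywise $|\cL(\bmu^{*m})_{\mathsf s}(\theta+\imag\eta)^{i,j}|\leq\cL(\bmu^{*m})_{\mathsf s}(\vartheta)^{i,j}$, and (\ref{condition:B}) says the norm of this upper bound is strictly less than $1$. (\ref{condition:D}) $\Rightarrow$ (\ref{condition:C}) is immediate. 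For (\ref{condition:C}) $\Rightarrow$ (\ref{condition:E}), (\ref{condition:D}) $\Rightarrow$ (\ref{condition:F}) and (\ref{condition:F}) $\Rightarrow$ (\ref{condition:E}), use the algebraic identity $(\Ip-A)^{-1}=(\Ip+A+\cdots+A^{m-1})(\Ip-A^{m})^{-1}$: when $\|A^{m}\|\leq 1-\varepsilon$, the Neumann series gives $\|(\Ip-A^{m})^{-1}\|\leq 1/\varepsilon$, while $\|A^{k}\|\leq\|\cL\bmu(\vartheta)\|^{k}$ for $k<m$ controls the prefactor, yielding the claimed bound on $\|(\Ip-A)^{-1}\|$ for $A=\cL\bmu(\vartheta+\imag\eta)$ or $A=\cL\bmu(\theta+\imag\eta)$.

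The delicate step is (\ref{condition:C}) $\Rightarrow$ (\ref{condition:D}). The uniform Riemann--Lebesgue bound above still implies that the absolutely continuous contribution to $\cL\bmu(\theta+\imag\eta)^{m}$ vanishes uniformly in $\theta\geq\vartheta$, so one reduces to bounding $\sup_{\theta\geq\vartheta}\|\cL\bmu_{\mathsf s}(\theta+\imag\eta)^{m}\|$ in $\limsup$. For $\theta$ in a right-neighbourhood $[\vartheta,\vartheta+\delta]$, uniform continuity of $\cL\bmu$ in $\theta$ (uniformly in $\eta$, via dominated convergence on the tilted measure) transfers the hypothesis (\ref{condition:C}) from $\theta=\vartheta$ to the whole strip. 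For very large $\theta$, the convergence $\cL\bmu(\theta+\imag\eta)\to\bmu(0)$ as $\theta\to\infty$ is uniform in $\eta$, and combined with $\rho(\bmu(0))<1$ from (A\ref{ass:subcritical instant offspring}) and Gelfand's formula it gives $\|\bmu(0)^{m}\|<1-\varepsilon$ for $m$ sufficiently large. The intermediate range $\theta\in[\vartheta+\delta,T]$ is where I expect the \emph{main obstacle} to lie; the plan is to choose the exponent in (\ref{condition:D}) as a sufficiently large multiple of the exponent in (\ref{condition:C}) so that the suprema on both boundary lines of the strip $\{\vartheta\leq\Real(z)\leq T\}$ are strictly below $1-\varepsilon/2$, and then apply a Phragm\'en--Lindel\"of-type (three-lines) argument to the entries of $\cL\bmu(z)^{m}$ to transport this bound into the interior of the strip.

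Finally, (\ref{condition:D}) $\Rightarrow$ $\Lambda_{\vartheta}$ finite: (\ref{condition:D}) provides $\eta_{0}$ such that $\Ip-\cL\bmu(\theta+\imag\eta)^{m}$ is invertible, hence $\det(\Ip-\cL\bmu(z))\neq 0$, on $\{\Real(z)\geq\vartheta,\,|\Imag(z)|\geq\eta_{0}\}$; on the compact complement (bounded in the non-lattice case by $|\Imag(z)|\leq\eta_{0}$, and additionally by $|\Imag(z)|\leq\pi$ in the lattice case), the determinant is holomorphic, so its zero set is discrete and hence finite. Under finite $\Lambda_{\vartheta}$, the implication (\ref{condition:E}) $\Rightarrow$ (\ref{condition:F}) follows by a holomorphic extension argument: $(\Ip-\cL\bmu(z))^{-1}$ is holomorphic on $\{\Real(z)\geq\vartheta\}\setminus\Lambda_{\vartheta}$, (\ref{condition:E}) provides boundedness on the left boundary line outside a bounded $\eta$-window, and the uniform limit $\cL\bmu(\theta+\imag\eta)\to\bmu(0)$ as $\theta\to\infty$ provides control at the ``right'' end; Phragm\'en--Lindel\"of on the resulting strip then propagates the boundedness into the interior, while the bounded $\eta$-window is handled by compactness since $\Lambda_{\vartheta}$ is finite. (\ref{condition:F}) $\Rightarrow$ (\ref{condition:E}) is trivial.
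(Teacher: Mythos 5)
There are two genuine problems in your proposal. First, the identity $(\bmu^{*m})_{\mathsf s}=\bmu_{\mathsf s}^{*m}$ that you use for (\ref{condition:A})$\Leftrightarrow$(\ref{condition:B}) is false: while it is true that convolving an absolutely continuous measure with anything stays absolutely continuous (so $(\bmu^{*m})_{\mathsf s}=(\bmu_{\mathsf s}^{*m})_{\mathsf s}$), the convolution of singular measures need not be singular (Wiener--Wintner: there are singular measures whose convolution square is absolutely continuous), so in general one only has the inequality $(\bmu^{*m})_{\mathsf s}\leq\bmu_{\mathsf s}^{*m}$. With only the inequality, your reduction of (\ref{condition:A}) to $\rho(\cL\bmu_{\mathsf s}(\vartheta))<1$ breaks in the direction (\ref{condition:A})$\Rightarrow$(\ref{condition:B}). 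The paper's argument avoids this by working with the submultiplicative bound $(\bmu^{*mn})_{\mathsf s}\leq((\bmu^{*m})_{\mathsf s})^{*n}$ and Gelfand's formula applied to $\cL(\bmu^{*m})_{\mathsf s}(\vartheta)$ itself, not to $\cL\bmu_{\mathsf s}(\vartheta)$.

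Second, and more seriously, your plan for (\ref{condition:C})$\Rightarrow$(\ref{condition:D}) does not close. The reduction to the singular part is unavailable here: condition (\ref{condition:C}) gives information only about $\cL\bmu(\vartheta+\imag\eta)^m$ on the single line $\Real(z)=\vartheta$ and says nothing about $\cL\bmu_{\mathsf s}(\vartheta)$, so you cannot invoke the decomposition used for (\ref{condition:B})$\Rightarrow$(\ref{condition:D}). The three-lines fallback also fails: on the left boundary line the bound $\|\cL\bmu(\vartheta+\imag\eta)^m\|<1-2\varepsilon$ holds only for $|\eta|\geq\zeta$, while for small $|\eta|$ one typically has $\|\cL\bmu(\vartheta+\imag\eta)^m\|\geq 1$ for \emph{every} $m$ (e.g.\ whenever $\rho_{\cL\bmu(\vartheta)}\geq 1$, which is the supercritical situation of interest), so the supremum over the whole boundary line is $\geq 1$ and the classical three-lines theorem transports nothing useful into the strip; inflating the exponent does not remove the bad bounded window. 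What is needed is a localized, harmonic-measure version of exactly this idea: represent $\cL\bmu(\theta+\imag\eta)^m$ as the Poisson integral of its boundary values on $\Real(z)=\vartheta$, split the integral into $|y|>\zeta$ (contributing at most $1-2\varepsilon$) and $|y|\leq\zeta$ (contributing at most $C\,\omega(z)$ with $\omega$ the harmonic measure of $[-\zeta,\zeta]$, which is small off a bounded disc). This is the content of Lemma \ref{Lem:limsup||Lmu(theta+ieta)||<1} in the paper (following Janson--Neininger), and without it your argument for this implication is incomplete. The remaining steps — the Neumann-series bound for (\ref{condition:C})$\Rightarrow$(\ref{condition:E}), the compact-rectangle argument for (\ref{condition:D})$\Rightarrow$ finiteness of $\Lambda_\vartheta$, and the boundary-boundedness argument for (\ref{condition:E})$\Rightarrow$(\ref{condition:F}) — match the paper and are fine.
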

\begin{proof}
Since $\rho_{\cL(\bmu^{*m})_{\mathsf s}(\vartheta)} \leq  \|\cL(\bmu^{*m})_{\mathsf s}(\vartheta)\|$, see e.g.\ \cite[Section 10.3, Theorem 1]{Lancaster+Tismenetsky:1985},
(\ref{condition:B}) implies (\ref{condition:A}).
To see that the converse implication also holds true,
first observe that $(\bnu^{*n})_{\mathsf s} \leq (\bnu_{\mathsf s})^{*n}$ for any matrix $\bnu$ of measures on the Borel sets of $[0,\infty)$
and $n \in \N$.
Indeed, denoting by $\bnu_{\mathsf a}$ the absolutely continuous part of $\bnu$,
we have $\bnu^{*n}=(\bnu_{\mathsf a}+\bnu_{\mathsf s})^n=(\bnu_{\mathsf s})^{*n}+\bnu'$,
where $\bnu'$ is a sum of convolution products of length $n$ of measures $\bnu_{\mathsf a}$
and $\bnu_{\mathsf s}$ with at least one convolution factor $\bnu_{\mathsf a}$.
In particular, $\bnu'$ is absolutely continuous. This implies
\begin{align}	\label{eq:power of singular}
(\bnu^{*n})_{\mathsf s} = ((\bnu_{\mathsf s})^{*n}+\bnu')_{\mathsf s} = ((\bnu_{\mathsf s})^{*n})_{\mathsf s} \leq (\bnu_{\mathsf s})^{*n}.
\end{align} 
Let $m \in \N$ be such that the condition (\ref{condition:A}) holds.
From \eqref{eq:Gelfand} we infer the existence of some $n \in \N$ with $\|(\cL\bmu^{*m}_{\mathsf s}(\vartheta))^n\|<1$.
Set $\bnu=\bmu^{*m}$ in \eqref{eq:power of singular} to infer
\begin{align*}
((\bmu^{*m})_{\mathsf s})^{*n} \geq ((\bmu^{*m})^{*n})_{\mathsf s}=(\bmu^{*mn})_{\mathsf s}.
\end{align*}
In particular, $\|\cL(\bmu^{*mn})_{\mathsf s}(\vartheta)\| \leq \|(\cL\bmu^{*m}_{\mathsf s}(\vartheta))^n\|<1$
proving that (\ref{condition:B}) is satisfied.	\smallskip

\noindent
Now assume that (\ref{condition:B}) holds, i.e., $\|\cL(\bmu^{*m})_{\mathsf s}(\vartheta)\|<1$ for some $m \in \N$. Then
\begin{align*}
\|\cL\bmu(\vartheta+\imag\eta)^{m}\|=\|\cL\bmu^{*m}(\vartheta+\imag\eta)\|
\leq \|\cL(\bmu^{*m})_{\mathsf s}(\vartheta+\imag\eta)\| + \|\cL(\bmu^{*m})_{\mathsf a}(\vartheta+\imag\eta)\|.
\end{align*}
The Riemann-Lebesgue lemma yields $\cL(\bmu^{*m})_{\mathsf a}(\vartheta+\imag\eta) \to 0$ as $|\eta|\to\infty$,
which in turn gives
\begin{align*}
\limsup_{\eta\to \pm \infty}\|\cL\bmu(\vartheta+\imag\eta)^m\|
\leq \limsup_{\eta \to \pm \infty} \|\cL(\bmu^{*m})_{\mathsf s}(\vartheta+\imag\eta)\|
\leq \|\cL(\bmu^{*m})_{\mathsf s}(\vartheta)\| <1.
\end{align*}

\noindent
To prove the equivalence of (\ref{condition:C}) and (\ref{condition:D}), it is sufficient to prove the implication from (\ref{condition:C}) to (\ref{condition:D}) since (\ref{condition:D}) is formally stronger than (\ref{condition:C}).
Since the argument is somewhat more involved, it is moved to Lemma \ref{Lem:limsup||Lmu(theta+ieta)||<1} below.	\smallskip

\noindent
We continue with the proof that (\ref{condition:C}) implies (\ref{condition:E}).
Notice that validity of (\ref{condition:C}) implies the existence of $\eta_0 \geq 0$ and $0<\delta<1$
such that, for $|\eta| \geq \eta_0$, $\|\cL\bmu(\vartheta+\imag\eta)^m\| \leq 1-\delta$.
For any such $\eta$, the von Neumann formula yields
\begin{align*}
\|(\Ip-\cL\bmu(\vartheta+\imag\eta))^{-1}\|
&\leq \bigg\|\sum_{n=0}^\infty \cL\bmu(\vartheta+\imag\eta)^n\bigg\|
\leq \sum_{k=0}^{\infty}\sum_{l=0}^{m-1} \|\cL\bmu(\vartheta+\imag\eta)^m\|^k\|\cL\bmu(\vartheta+\imag\eta)\|^l	\\
&\leq	\frac1\delta \sum_{l=0}^{m-1}\|\cL\bmu(\vartheta)\|^l
\end{align*}
proving that also condition (\ref{condition:E}) holds.	\smallskip

Since (\ref{condition:F}) is formally stronger than (\ref{condition:E}),
we continue with the proof that (\ref{condition:E}) implies (\ref{condition:F})
under the additional assumption that $\Lambda_\vartheta$ is finite.
To this end, observe that if $\Lambda_\vartheta$ is finite, then we can find $\eta_0 \geq 0$
such that the function $z\mapsto (\Ip-\cL\bmu(z))^{-1}$ is holomorphic on a neighborhood of
$\{\Real(z) \geq \vartheta,\, \Imag(z) \geq \eta_0\}$.
Since $\cL\bmu(\theta+\imag\eta_0) \to \bmu(0)$ as $\theta \to \infty$ by the dominated convergence theorem
and since $\rho_{\bmu(0)}<1$, we conclude that 
\begin{align*}
\sup_{\theta \geq \vartheta} \big\| (\Ip-\cL\bmu(\theta+\imag\eta_0))^{-1} \big\| <\infty.
\end{align*}
Condition (\ref{condition:E}) ensures that the holomorphic function $z\mapsto (\Ip-\cL\bmu(z))^{-1}$ is bounded
on the boundary of the quadrant $\{\Real(z) \geq \vartheta,\, \Imag(z) \geq \eta_0\}$.
The function is therefore bounded on this set.

Finally, suppose that (\ref{condition:D}) holds. We prove that $\Lambda_\vartheta$ is then finite.
It suffices to prove that $\Lambda_\vartheta$ is contained in a compact rectangle,
which in turn is contained in $\overline{\cH}_{\vartheta} \defeq \{z \in \C:\Real(z) \geq \vartheta\}$.
\begin{center}    
\begin{tikzpicture}[scale=1]

\draw[->] (-2,0) -- (3,0) node[right] {$\Real(z)$};
\draw[->] (0,-2) -- (0,3) node[above] {$\Imag(z)$};

\fill[gray!20] (0.3,0) rectangle (2.6,1.6);
\fill[gray!20] (0.3,-1.6) rectangle (2.6,0);

\draw[black, densely dotted] (0.75,1.7) -- (0.88,2.1);
\draw[black, densely dotted] (1.05,1.7) -- (1.18,2.1);
\draw[black, densely dotted] (1.35,1.7) -- (1.48,2.1);
\draw[black, densely dotted] (1.65,1.7) -- (1.78,2.1);
\draw[black, densely dotted] (1.95,1.7) -- (2.08,2.1);
\draw[black, densely dotted] (2.25,1.7) -- (2.38,2.1);

\draw[black, densely dotted] (0.15,1.7) -- (0.57,3.1);
\draw[black,densely dotted] (0.45,1.7) -- (0.87,3.1);

\draw[black, densely dotted] (0.98,2.5) -- (1.17,3.1);
\draw[black, densely dotted] (1.28,2.5) -- (1.47,3.1);
\draw[black, densely dotted] (1.58,2.5) -- (1.77,3.1);
\draw[black, densely dotted] (1.88,2.5) -- (2.07,3.1);
\draw[black, densely dotted] (2.18,2.5) -- (2.37,3.1);
\draw[black, densely dotted] (2.48,2.5) -- (2.67,3.1);
\draw[black, densely dotted] (2.54,1.7) -- (2.97,3.1);

\draw[black, densely dotted] (2.6,0) -- (3.57,3.1);
\draw[black, densely dotted] (2.9,0) -- (3.87,3.1);
\draw[black, densely dotted] (3.34,0.4) -- (4.17,3.1);
\draw[black, densely dotted] (3.64,0.4) -- (4.47,3.1);

\draw[black, densely dotted] (2.84,1.7) -- (3.27,3.1);
\draw[black, densely dotted] (2.84,1.7) -- (2.6,0.9);

\draw[black] (0.3,-0.4) -- (0.3,1.6);
\draw[black] (0.3,-0.9) -- (0.3,-1.6);
\draw[black] (2.6,-0.4) -- (2.6,1.6);
\draw[black] (2.6,-0.9) -- (2.6,-1.6);
\draw (0.3,-.1) -- (0.3,.1) node[below=14pt] {$\vartheta$};
\draw (2.6,-.1) -- (2.6,.1) node[below=14pt] {$\theta$};
\draw (-.1,1.6) -- (0.1,1.6) node[black, left=12pt] {$\eta_0$};
\draw[black, thick] (-0.1,1.6) -- (3,1.6);
\draw (-.1,-1.6) -- (0.1,-1.6) node[black, left=12pt] {$-\eta_0$};
\draw[black, thick] (-0.1,-1.6) -- (3.1,-1.6);
\draw[black, densely dotted] (2.65,-1.6) -- (3.1,-0.2);
\draw[black, densely dotted] (2.95,-1.6) -- (3.4,-0.2);
\draw[black, densely dotted] (0.1,-2) -- (0.2,-1.7);
\draw[black, densely dotted] (0.4,-2) -- (0.5,-1.7);
\draw[black, densely dotted] (0.7,-2) -- (0.8,-1.7);
\draw[black, densely dotted] (1,-2) -- (1.1,-1.7);
\draw[black, densely dotted] (1.3,-2) -- (1.4,-1.7);
\draw[black,densely dotted] (1.6,-2) -- (1.7,-1.7);
\draw[black,densely dotted] (1.9,-2) -- (2,-1.7);
\draw[black,densely dotted] (2.2,-2) -- (2.3,-1.7);
\draw[black,densely dotted] (2.5,-2) -- (2.6,-1.7);
\draw[black,densely dotted] (2.8,-2) -- (2.9,-1.7);
\node [black] at (1.67,2.3) {$\|\cL\bmu(z)\|<1$};

\label{Fig:Lambda_theta in rectangle}
\end{tikzpicture}


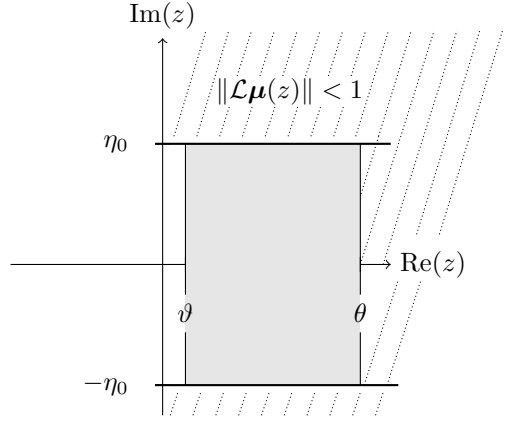
\captionof{figure}{$\Lambda_{\vartheta}$ is contained in a compact rectangle in the right half-plane.}
\end{center}
Indeed, the entries of $\cL\bmu(z)$ are holomorphic on $\overline{\cH}_{\vartheta}$,
and hence so is $\det(\Ip-\cL\bmu(z))$.
Further, $\det(\Ip-\cL\bmu(z))$ is not identically zero by Proposition \ref{Prop:properties of varrho}.
Hence the set of its zeroes has no accumulation point within $\overline{\cH}_{\vartheta}$ and
in particular not within the compact rectangle.

To prove the existence of such a compact rectangle,
first recall that $\det(\Ip-\cL\bmu(\lambda))=0$ implies that for every $m$, $\cL\bmu^m(\lambda)$ has eigenvalue $1$ 
and, as a consequence, $\|(\cL\bmu(\lambda))^m\| \geq 1$.
On the other hand, $\rho_{\cL\bmu(\theta)} \to \rho_{\bmu(0)} < 1$ as $\theta \to \infty$
by Proposition \ref{Prop:properties of varrho}(b) and (A\ref{ass:subcritical instant offspring}).
Consequently, $\|(\cL\bmu(z))^m\|<1$ for some $m$ and $\Real(z)\ge\theta$ for large enough $\theta$ proving $\Lambda\subseteq\{z:\Real(z)\le\theta\}$. 
Further,  from \eqref{condition:D} and the fact that $\cL\mu(\bar{z})=\overline{\cL\mu(z)}$ follows that $\Lambda\subseteq\{z:\Imag(z)\le\eta_0\}$, for large enough $\eta_0$.
\end{proof}

\begin{lemma}	\label{Lem:limsup||Lmu(theta+ieta)||<1}
Suppose that (A\ref{ass:first moment}) and (A\ref{ass:subcritical instant offspring}) hold
and let $m \in \N$. If
\begin{align}	\label{eq:limsup||Lmu(theta+ieta)||<1}
\limsup_{\eta\to\pm\infty} \| \cL\bmu(\vartheta+\imag\eta)^m \|<1,
\end{align}
then
\begin{align}	\label{eq:matrix norm uniformly small}
\limsup_{\eta \to \pm\infty} \sup_{\theta \geq \vartheta} \|\cL\bmu(\theta + \imag \eta)^m\|<1.
\end{align}
\end{lemma}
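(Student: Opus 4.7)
The plan is to view $z \mapsto \cL\bmu(z)^m$ as a matrix-valued holomorphic function that is bounded on the closed half-plane $\overline{\cH}_\vartheta = \{\Real(z) \geq \vartheta\}$ and to apply a Poisson-kernel estimate (Phragm\'en--Lindel\"of) in order to transfer the boundary bound \eqref{eq:limsup||Lmu(theta+ieta)||<1} into the interior uniformly in $\Real(z)$.

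First I would set $\bnu \defeq \bmu^{*m}$, so that $\cL\bnu(z)=\cL\bmu(z)^m$, and rephrase the hypothesis as supplying some $\delta>0$ and $\omega_0 \geq 0$ with $\|\cL\bnu(\vartheta+\imag\omega)\| \leq 1-2\delta$ for all $|\omega| \geq \omega_0$. By (A\ref{ass:first moment}), each entry $\cL\nu^{i,j}(z)$ is holomorphic on $\{\Real(z)>\vartheta\}$, continuous on $\overline{\cH}_\vartheta$ (dominated convergence), and bounded there by $\cL\nu^{i,j}(\vartheta)<\infty$. Fixing the operator norm induced by the Euclidean norm (all matrix norms on $\C^{p\times p}$ being equivalent, this is without loss of generality), I would then write $G(z) \defeq \|\cL\bnu(z)\|$ as $\sup_{\|v\|=\|w\|=1}|\langle w, \cL\bnu(z) v\rangle|$. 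Each function inside the supremum is the modulus of a bounded holomorphic function on $\overline{\cH}_\vartheta$, hence subharmonic; therefore $G$ is itself subharmonic, continuous, and bounded on $\overline{\cH}_\vartheta$, with $M \defeq \sup_{\omega\in\R} G(\vartheta+\imag\omega) <\infty$.

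Next I would apply Phragm\'en--Lindel\"of for bounded subharmonic functions on a half-plane to obtain the Poisson dominance
\begin{equation*}
G(\theta+\imag\eta) \,\leq\, \int_{-\infty}^\infty P_{\theta-\vartheta}(\eta-\omega)\, G(\vartheta+\imag\omega) \, d\omega,\qquad \theta>\vartheta,\ \eta\in\R,
\end{equation*}
where $P_s(\xi) = (s/\pi)/(s^2+\xi^2)$ is the Poisson kernel of the right half-plane. Splitting the integral at $|\omega|=\omega_0$, the tail $|\omega|\geq\omega_0$ contributes at most $(1-2\delta)\int P_{\theta-\vartheta}=1-2\delta$; for the central portion $|\omega|<\omega_0$ I would use the universal bound $P_s(\xi)\leq 1/(2\pi|\xi|)$ (the Poisson kernel attains its maximum in $s$ at $s=|\xi|$) combined with $|\eta-\omega|\geq|\eta|-\omega_0$ to obtain a bound of $M\omega_0/(\pi(|\eta|-\omega_0))$, uniformly in $\theta>\vartheta$. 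Combining the two and noting that the case $\theta=\vartheta$ is covered directly by the hypothesis then gives $\sup_{\theta\geq\vartheta} G(\theta+\imag\eta) \leq 1-\delta$ for all $|\eta|$ sufficiently large, which is exactly \eqref{eq:matrix norm uniformly small}.

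The main obstacle is the Poisson dominance step itself, which hinges on having subharmonicity, continuity, and boundedness of $G$ on $\overline{\cH}_\vartheta$; these come cheaply from the representation of $G$ as a supremum of moduli of bounded holomorphic functions, so the real work is entirely contained in the kernel estimate. The decisive gain over a naive triangle-inequality attempt is that the $\theta$-dependence enters only through the kernel's width parameter and is therefore uniformly dominated by $1/(2\pi|\eta-\omega|)$, which is what makes the sup over $\theta \geq \vartheta$ harmless as $|\eta|\to\infty$.
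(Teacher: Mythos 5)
Your proof is correct and follows essentially the same route as the paper's: both propagate the boundary bound into the half-plane via the Poisson-kernel representation on $\overline{\cH}_\vartheta$ and split the boundary integral at $|\omega|=\omega_0$ into a tail contributing at most $1-2\delta$ and a small central part. The only (harmless) differences are that the paper applies the Poisson representation entrywise to the bounded harmonic entries of $\cL\bmu(z)^m$ and then takes norms inside the integral, rather than invoking subharmonicity of the norm function, and it controls the central portion via the geometry of the superlevel set of the harmonic measure instead of your explicit kernel bound $P_s(\xi)\le 1/(2\pi|\xi|)$.
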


The proof is an extension of that of Lemma 2.1 in \cite{Janson+Neininger:2008}.
Even though the proof of the latter result carries over without major obstacles, we give a version adapted to the
situation here for the reader's convenience.

\begin{proof}
From (A\ref{ass:first moment}), we have
$\overline{\cH}_{\vartheta} = \{z \in \C:\Real(z) \geq \vartheta\} \subseteq \interior{\Dom(\cL\bmu(\vartheta))}$.
Now suppose that, for some $m \in \N$, \eqref{eq:limsup||Lmu(theta+ieta)||<1} holds.
Choose $\varepsilon > 0$ with
\begin{align*}	\textstyle
\limsup_{\eta \to \infty} \|\cL\bmu(\vartheta+\imag\eta)^m\| < 1-2\varepsilon
\end{align*}
and $\zeta \geq 0$ with $\|\cL\bmu(\vartheta+\imag\eta)^m\| \leq 1-2\varepsilon$ when $|\eta| \geq \zeta$.
Define $C \defeq \sup_{\eta \in \R} \|\cL\bmu(\vartheta+\imag\eta)^m\|$ and notice that $C<\infty$.
The entries of $(\cL\bmu)^m$ are Laplace transforms of locally finite measures on $[0,\infty)$
and all bounded and continuous on $\overline{\cH}_{\vartheta}$, and holomorphic, thus harmonic, on a neighborhood of $\overline{\cH}_{\vartheta}$.
We may thus apply the Poisson transformation on the closed half-space $\overline{\cH}_\vartheta$
and infer that $(\cL\bmu)^m$ is the integral of its boundary values with respect to the Poisson kernel
$\Psf_{\!x}(y) \defeq \frac{1}{\pi} \frac{x}{x^2+y^2}$ for $x,y\in\R$.
More precisely,
\begin{align}	\label{eq:Poisson tranform}
\cL\bmu(\theta + \imag \eta)^m = \int \limits_{-\infty}^{\infty} \cL\bmu(\vartheta+\imag y)^m \, \Psf_{\theta-\vartheta}(\eta-y) \, \dy, \quad \theta > \vartheta, \, \eta \in \R,
\end{align}
see e.g.\ \cite[Lemma 3.4 on p.\;17]{Garnett:1981}.
We conclude
\begin{align}
\|\cL\bmu(\theta + \imag \eta)^m\|
&= \bigg\|\int \limits_{-\infty}^{\infty} \cL\bmu(\vartheta+\imag y)^m \, \Psf_{\theta-\vartheta}(\eta-y) \, \dy\bigg\|
\leq \int \limits_{-\infty}^{\infty} \big\|\cL\bmu(\vartheta+\imag y)^m\| \, \Psf_{\theta-\vartheta}(\eta-y) \, \dy	 \notag	\\
&= \int \limits_{\{|y|>\zeta\}} \!\!\! \big\|\cL\bmu(\vartheta+\imag y)^m\| \, \Psf_{\theta-\vartheta}(\eta-y) \, \dy	
+\int \limits_{-\zeta}^\zeta \big\|\cL\bmu(\vartheta+\imag y)^m\| \, \Psf_{\theta-\vartheta}(\eta-y) \, \dy	\notag	\\
&\leq 1-2\varepsilon + C \int_{-\zeta}^\zeta \Psf_{\theta-\vartheta}(\eta-y) \, \dy.	\label{eq:bound from Poisson trafo}
\end{align}
Define
\begin{equation*}
\omega(\theta + \imag \eta) \defeq \int_{-\zeta}^\zeta \!\!\! \Psf_{\theta-\vartheta}(\eta-y) \, \dy \geq 0.
\end{equation*}
The set $\cH \defeq \{z\in\cH_{\vartheta}:\omega(z)>\frac{\varepsilon}{C}\}$ is an intersection of the right half-plane $\cH_{\vartheta}$
and a circular disk, see \cite[p.\;13]{Garnett:1981}. In particular, it is a bounded set.
Thus, $H' \defeq \sup\{\Imag(z):z\in\cH \}<\infty$.
Now, if $\Real(z)\geq \theta$ and $|\Imag(z)|>H'$, then $\omega(z) \leq \frac{\varepsilon}{C}$.
Hence, 
\begin{align*}
\|\cL\bmu(\theta + \imag \eta)^m\| \leq 1-\varepsilon
\end{align*}
proving that \eqref{eq:matrix norm uniformly small} holds.
\end{proof}

\section{Proofs}

\subsection{Spectral radius and Malthusian parameter}	\label{subsec:varrho}

We first collect all the basic results about the spectral radius of the matrix $\cL\bmu$
of the Laplace transforms.
In order to simplify the notation we define the function
\begin{align}	\label{eq:varrho}
\varrho: \Dom(\cL\bmu) \to [0,\infty),	\quad	z \mapsto \varrho(z) \defeq \rho_{\cL \bmu(z)}.
\end{align}

\begin{proof}[Proof of Proposition \ref{Prop:largest root}]
Suppose that (A\ref{ass:first moment}) and (A\ref{ass:subcritical instant offspring})  hold,
in particular, $\cL\bmu(\vartheta)$ has finite entries only for some $\vartheta \in \R$.	\smallskip

\noindent
(a)
First suppose there is some $\alpha \in \Lambda \cap \R$ with $\rho_{\cL\bmu(\alpha)} = 1$.
By (A\ref{ass:subcritical instant offspring}) and Proposition \ref{Prop:properties of varrho}, we have
$\alpha \in \{\theta \in \Dom(\cL\bmu) \cap \R: \varrho(\theta) > \rho_{\bmu(0)}\}$
and, therefore, that $\varrho|_{\Dom(\cL\bmu) \cap \R}$ is decreasing
and even strictly decreasing in a neighborhood of $\alpha$ in $\Dom(\cL\bmu)$.
In particular, $\alpha$ is unique with the property $\varrho(\alpha) = 1$.	\smallskip

\noindent
(b) 
Now suppose that additionally (A\ref{ass:Lmu(vartheta) geq 1}) holds, i.e.,
there is $\vartheta \in \R$ with $1 \leq \varrho(\vartheta) < \infty$.
By Proposition \ref{Prop:properties of varrho}, $\varrho$
is continuous and decreasing with $\lim_{\theta \to \infty} \varrho(\theta) = \rho_{\bmu(0)}<1$,
so by the intermediate value theorem there exists an $\alpha \geq \vartheta$
with $\varrho(\alpha)=1$.
The matrix $\cL\bmu(\alpha)$ thus has eigenvalue $1$
and, therefore, $\det(\Ip-\cL\bmu(\alpha))=0$, i.e., $\alpha \in \Lambda \cap \R$.	\smallskip

\noindent
(c)
If $\Lambda$ is non-empty,
then there exists $\lambda \in \Dom(\cL\bmu)$ with $\det(\Ip-\cL\bmu(\lambda))=0$,
that is, $1$ is an eigenvalue of $\cL\bmu(\lambda)$, so $\varrho(\lambda) \geq 1$.
By Proposition \ref{Prop:properties of varrho}(d), $\varrho(\Real(\lambda))\geq 1$,
so (A\ref{ass:Lmu(vartheta) geq 1}) holds.	\smallskip

\noindent
(d) Suppose that $\bmu$ is primitive.
Let $\theta > \alpha$. Then $\varrho(\theta)<1$ and
is the Perron-Frobenius eigenvalue of $\cL\bmu(\theta)$.
By Lemma \ref{Lem:largest eigenvalue cts and mon},
there exists a unique eigenvector $v_\theta$ associated with the eigenvalue $\varrho(\theta)$
of $\cL\bmu(\theta)$ with positive entries only and $|v_\theta|=1$.
Moreover, the lemma yields that $\theta \mapsto v_\theta$ is continuous.
In particular, all entries of $v_\theta$ are bounded away from $0$
in a right neighborhood $[\alpha,\alpha+\epsilon]$ of $\alpha$
and for any given vector $y \in \Rnn^d$, we can find
a finite constant $c>0$ such that $y \leq c v_\theta$ for every $\theta \in [\alpha,\alpha+\epsilon]$.
For $\alpha < \theta \leq \alpha+\epsilon$, using Lemma \ref{Lem:rho_A<1}, we infer
\begin{align*}
(\theta-\alpha) (\Ip - \cL\bmu(\theta))^{-1} y
&= (\theta-\alpha) \sum_{n=0}^\infty \cL\bmu(\theta)^n y
\leq c (\theta-\alpha) \sum_{n=0}^\infty \cL\bmu(\theta)^n v_\theta	\\
&= c (\theta-\alpha) \sum_{n=0}^\infty \varrho(\theta)^n v_\theta
= c\frac{\theta-\alpha}{1-\varrho(\theta)} v_\theta
\leq \frac{c \epsilon}{1-\varrho(\alpha+\epsilon)} v_\theta
\end{align*}
by the convexity of $\varrho(\theta)$. The last expression remains bounded as $\theta \downarrow \alpha$.
Therefore, $\alpha$ is a pole of order at most one for each of the entries of $(\Ip-\cL\bmu(z))^{-1}$
and a pole of order one for at least one of these entries.
Further, suppose that $\lambda \in \C$ satisfies $\Real(\lambda)=\alpha$, i.e., $\lambda = \alpha + \imag \eta$ for some $\eta \not = 0$.
We may assume without loss of generality that $\eta>0$.
Then each entry of $\cL\bmu(\lambda)$ is bounded in absolute value
by the corresponding entry of $\cL\bmu(\alpha)$, so $\varrho(\lambda) \leq \varrho(\alpha) = 1$
by Lemma \ref{Lem:rho is monotone}. By the Perron-Frobenius theorem \cite[Theorem 1.1]{Seneta:1981}, equality can only hold
if the absolute value of every entry of $\cL\bmu(\lambda)$ equals the corresponding entry of $\cL\bmu(\alpha)$.
This means that $\cL\mu^{i,j}(\alpha+\imag \eta) = \cL\mu^{i,j}(\alpha)$ for all $i,j\in\p$.
Hence, $\mu^{i,j}$ is concentrated on $2\pi \Z/\eta$. Consequently, we are in the lattice case.
By assumption, the smallest lattice on which all $\mu^{i,j}$ are concentrated is $\Z$,
so $\eta \geq 2\pi$ and hence $\lambda \not \in \Lambda$ by the definition of $\Lambda$ in the lattice case.
\end{proof}

\subsection{Existence and uniqueness}	\label{subsec:existence and uniqueness}

\begin{proof}[Proof of Proposition \ref{Prop:existence and uniqueness MRE}]
(b)
By assumption (A\ref{ass:first moment}), we have $\cL\bmu(\vartheta) < \infty$ for some $\vartheta \in \R$.
Recall from \eqref{eq:rho_mu(theta)->rho_mu(0)} that
$\rho_{\cL\bmu(\theta)} \to \rho_{\bmu(0)} < 1$ as $\theta \to \infty$,
where the last inequality is guaranteed by (A\ref{ass:subcritical instant offspring}).
Pick $\theta \in \R$ with $\rho_{\cL\bmu(\theta)}<1$. Then
\begin{align}	\label{eq:LU(theta)<infty}
\cL \bU(\theta)
= \sum_{n=0}^\infty \, \cL\bmu^{*n}(\theta) = \sum_{n=0}^\infty \, \cL\bmu(\theta)^n = (\Ip-\cL\bmu(\theta))^{-1},
\end{align}
which is a finite matrix by Lemma \ref{Lem:rho_A<1}.
Lemma \ref{Lem:growth of fcts with finite LT} now gives $U^{i,j}(t) = o(e^{\theta t})$ as $t \to \infty$
for $i,j\in\p$.	\smallskip

(a)
The case where (A\ref{ass:subcritical instant offspring}) holds but (A\ref{ass:first moment}) is violated
can be reduced to the case where (A\ref{ass:first moment}) and (A\ref{ass:subcritical instant offspring}) hold.
Indeed, fix $t > 0$ und replace $\bmu(\cdot)$ by $\bmu_t(\cdot) \defeq \bmu(\cdot \cap [0,t])$.
The Laplace transform of $\bmu_t(\cdot)$ is finite everywhere and the associated Markov renewal measure
coincides with $\bU$ on $[0,t]$. In particular, $\bU(t)$ is finite.	\smallskip

\noindent
(c)
The assertion can be obtained from the proof of \cite[Theorem 2.1]{Crump:1970a}.
While the cited theorem covers only the irreducible case, the proof also works without this assumption. 
%
\end{proof}

\subsection{Asymptotic expansions in the non-lattice case}	\label{subsec:asymptotic expansions non-lattice}

\begin{proof}[Proof of Theorem \ref{Thm:Markov renewal theorem U(t) non-lattice}]
We first suppose that $\vartheta > 0$.
Recall that the (matrix-valued) renewal function $\bU$ satisfies the renewal equation
\begin{align}	\tag{\ref{eq:renewal for U}}
(U^{i,j}(t))_{i,j \in \p} = \bU(t) = \1_{[0,\infty)}(t) \Ip + \bmu*\bU(t),	\quad	t \in \R.
\end{align}
As in \cite{Iksanov+al:2024}, the strategy is to take Laplace transforms in \eqref{eq:renewal for U},
to analyze these and to use the Laplace inversion formula
to infer the asymptotic behavior of $\bU$.
However, the Laplace inversion theorem requires certain integrability properties which are
satisfied only when using a smoothed version of the indicator $\1_{[0,\infty)}$.
Therefore, for every $\varepsilon>0$,
we define $g_\varepsilon(t) \defeq \1_{[0,\varepsilon)}(t) \frac1\varepsilon t + \1_{[\varepsilon,\infty)}(t)$ for $t \in \R$.
An alternative representation of $g_\varepsilon$ is $(\frac1\varepsilon \1_{[0,\varepsilon]})*\1_{[0,\infty)}(t)$, $t \in \R$.
\begin{center}
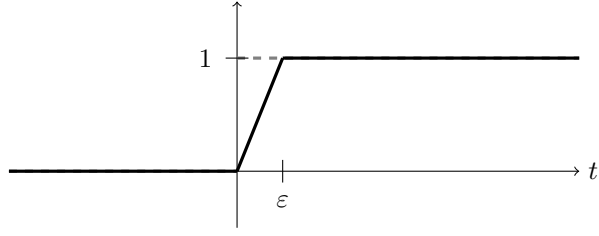

\begin{tikzpicture}[scale=1.5]	\label{fig:f_epsilon}
\draw[->] (-2,0) -- (3,0) node[right] {$t$};
\draw[->] (0,-0.5) -- (0,1.5);
\draw[very thick,gray, dashed] (0,1) -- (3,1);
\draw[very thick, gray, dashed] (-2,0) -- (0,0);
\draw[very thick] (0,0) -- (0.4,1);
\draw[very thick] (-2,0) -- (0,0);
\draw[very thick] (0.4,1) -- (3,1);
\draw (0.4,-.1) -- (0.4,.1) node[below=10pt] {$\varepsilon$};
\draw (-.1,1) -- (.1,1) node[left=10pt] {$1$};
\end{tikzpicture}
\captionof{figure}{Graphs of $\1_{[0,\infty)}$ (dashed) and $g_{\varepsilon}$ (solid).}
\end{center}
Clearly,
\begin{align*}
g_{\varepsilon}(t)  \leq \1_{[0,\infty)}(t) \leq g_{\varepsilon}(t+\varepsilon),	\quad	t \in \R.
\end{align*}
This immediately implies
\begin{align}	\label{eq:U_eps(t) leq U(t) leq U_eps(t+eps)}
\bU_{\!\varepsilon}(t) \defeq g_\varepsilon \Ip * \bU(t) \leq \bU(t) \leq \bU_{\!\varepsilon}(t+\varepsilon),	\quad	t \in \R,
\end{align}
where as usual the inequality has to be understood entrywise.
According to Lemma \ref{Lem:rho_A<1},
the Laplace transform
\begin{align*}
\cL \bU(z) \defeq \bigg(\int e^{-z x} \, \bU(\dx)\bigg)_{\!i,j\in\p}
= \sum_{n=0}^\infty \cL\bmu(z)^n
= (\Ip-\cL\bmu(z))^{-1}
\end{align*}
is finite for every $z$ with $\rho_{\cL\bmu(\Real(z))}<1$.
For any such $z$, we infer
\begin{align}
\cL \bU_{\!\varepsilon}(z)
&= \cL((\tfrac1\varepsilon \1_{[0,\varepsilon]})* \bU)(z)
= \cL((\tfrac1\varepsilon \1_{[0,\varepsilon]})* (\1_{[0,\infty)} \Ip + \bmu*\bU))(z)	\notag	\\
&= \cL((\tfrac1\varepsilon \1_{[0,\varepsilon]}) * \1_{[0,\infty)} \Ip)(z)
+ \cL((\tfrac1\varepsilon \1_{[0,\varepsilon]}) *\bmu*\bU )(z)	\notag  \\
&= \cL g_\varepsilon(z) \Ip
+ \cL(\bmu * \bU_{\!\varepsilon})(z)		\notag	\\	
&= \cL g_\varepsilon(z) \Ip
+ \cL\bmu(z) \cdot \cL \bU_{\!\varepsilon}(z)	\label{eq:Laplace matrix implicit}
\end{align}
where we have used
$(\tfrac1\varepsilon \1_{[0,\varepsilon]}) * \bmu * \bU
= \bmu * (\tfrac1\varepsilon \1_{[0,\varepsilon]}) * \bU = \bmu * \bU_{\!\varepsilon}(z)$
and the convolution theorem for Laplace transforms.
Solving \eqref{eq:Laplace matrix implicit} for $\cL \bU_{\varepsilon}(z)$
gives
\begin{align}	\label{eq:Laplace matrix explicit}
\cL \bU_{\varepsilon}(z) = (\Ip-\cL\bmu(z))^{-1} \cL g_\varepsilon(z)
\end{align}
provided that $(\Ip-\cL\bmu(z))$ is invertible, i.e.\ $\det(\Ip-\cL\bmu(z)) \not = 0$.
Now use that
\begin{align*}
\cL g_\varepsilon(z) = \cL(\tfrac1\varepsilon \1_{[0,\varepsilon]} * \1_{[0,\infty)})(z)
= \cL(\tfrac1\varepsilon \1_{[0,\varepsilon]})(z) \cdot \cL\1_{[0,\infty)}(z)
= \frac{1-e^{-\varepsilon z}}{\varepsilon z} \cdot \frac{1}{z} = \frac{1-e^{-\varepsilon z}}{\varepsilon z^2}
\end{align*}
to infer
\begin{align}	\label{eq:Laplace matrix even more explicit}
\cL \bU_{\!\varepsilon}(z) = \frac{1-e^{-\varepsilon z}}{\varepsilon z^2} (\Ip-\cL\bmu(z))^{-1}
\end{align}
for $z \in \Dom(\cL\bmu)$ with $\det(\Ip-\cL\bmu(z)) \not = 0$.
The right-hand side of \eqref{eq:Laplace matrix even more explicit}
defines a meromorphic extension of $\cL \bU_{\!\varepsilon}(z)$ on the half-plane $\Real(z)>\vartheta$.
If any of the entries of the matrix on the right-hand side of \eqref{eq:Laplace matrix even more explicit}
has a pole in $\lambda \in \C$ with $\Real(\lambda)>\vartheta$, then $\lambda \in \Lambda_\vartheta$.
Recall from Lemma~\ref{Lem:comparison of conditions}\eqref{condition:F} that
\begin{align}	\label{eq:sup over rectangles bound}
\sup_{\theta \geq \vartheta, |\eta| \geq \eta_0} \|(\Ip-\cL\bmu(\theta+\imag \eta))^{-1}\|<\infty
\end{align}
for some suitably large $\eta_0 > 0$.
In particular, the entries of $(\Ip - \cL\bmu(z))^{-1}$ are bounded on $\{z \in \C: \Real(z) \geq \vartheta, \, |\Imag(z)| \geq \eta_0\}$.
Therefore, the entries of the matrix on the right-hand side of \eqref{eq:Laplace matrix even more explicit}
are bounded in absolute value by a constant times $|z|^{-2}$ on $\{z \in \C: \Real(z) \geq \vartheta, \, |\Imag(z)| \geq \eta_0\}$,
making them integrable along vertical lines that do not contain a root.
Pick $\sigma > \vartheta$ with $\rho_{\cL\bmu(\sigma)} < 1$
(such a $\sigma$ exists by Proposition \ref{Prop:properties of varrho}(b)
and (A\ref{ass:subcritical instant offspring})). Then, in particular, $\sigma > \Real(\lambda)$ for all $\lambda \in \Lambda_{\vartheta}$
by Proposition \ref{Prop:properties of varrho}(d).
For any such $\sigma$, the Laplace inversion formula gives
\begin{align}	\label{eq:Laplace inversion}
\bU_{\!\varepsilon}(t) = \frac{1}{2\pi\imag}  \int \limits_{\sigma-\imag \infty}^{\sigma+\imag\infty} e^{t z}  \cL  \bU_{\!\varepsilon}(z) \, \dz.
\end{align}
The residue theorem then gives
\begin{align}
2\pi\imag  \sum_{\lambda\in\Lambda_{\vartheta}} \textrm{Res}_{z=\lambda}(e^{t z}  \cL \bU_{\!\varepsilon}(z))
&= \int_{\sigma-\imag R}^{\sigma+\imag R} e^{t z}   \cL  \bU_{\!\varepsilon}(z) \, \dz
+ \int_{\sigma+\imag R}^{\vartheta+\imag R} e^{t z}  \cL  \bU_{\!\varepsilon}(z) \, \dz \nonumber \\
&\hphantom{=} + \int_{\vartheta+\imag R}^{\vartheta-\imag R} e^{t z}  \cL  \bU_{\!\varepsilon}(z) \, \dz
+ \int_{\vartheta-\imag R}^{\sigma-\imag R} e^{t z}  \cL \bU_{\!\varepsilon}(z) \, \dz
\label{eq:application of residue theorem}
 \end{align}
for all sufficiently large $R > 0$.

\begin{center}
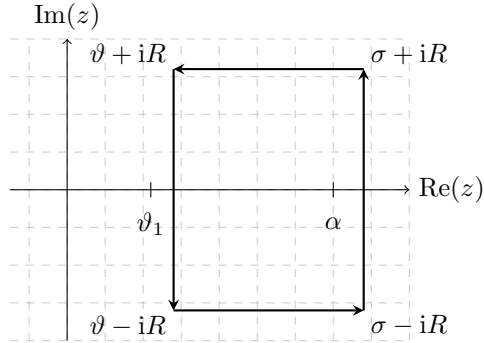

\begin{tikzpicture}[scale=1]
	\draw[gray!40, step=0.5,dashed] (-1.25,-2) grid (4,2);
	\draw[->] (-1.25,0) -- (4,0) node[right] {$\Real(z)$};
	\draw[->] (-0.5,-2) -- (-0.5,2) node[above] {$\Imag(z)$};

	\draw [stealth-,thick] (0.9,-1.6) -> (0.9,1.6);
	\draw [-stealth,thick] (0.9,-1.6) -> (3.4,-1.6);
	\draw [-stealth,thick] (3.4,-1.6) -> (3.4,1.6);
	\draw [-stealth,thick] (3.4,1.6) -> (0.9,1.6);

	\draw (.6,-.1) -- (.6,.1) node[below=8pt] {$\vartheta_1$};
	\draw (3,-.1) -- (3,.1) node[below=10pt] {$\alpha$};
	\node [black] at (4,1.8) {$\sigma+\mathrm{i}R$};
	\node [black] at (0.3,1.8) {$\vartheta+\mathrm{i}R$};
	\node [black] at (4,-1.8) {$\sigma-\mathrm{i}R$};
	\node [black] at (0.3,-1.8) {$\vartheta-\mathrm{i}R$};
\label{fig:Residue theorem}
\end{tikzpicture}
\captionof{figure}{A piecewise continuously differentiable path that encloses all solutions $\lambda\in\Lambda_\vartheta$.}
\end{center} 
By rearranging \eqref{eq:application of residue theorem}, we obtain
\begin{align}
\int_{\sigma-\imag R}^{\sigma+\imag R} e^{t z}  \cL  \bU_{\!\varepsilon}(z) \: \dz
&= 2\pi \imag \sum_{\lambda\in\Lambda_{\vartheta}} \text{Res}_{z=\lambda}(e^{t z} \cL \bU_{\!\varepsilon}(z))
+ \int_{\vartheta+\imag R}^{\sigma+\imag R} e^{t z} \cL \bU_{\!\varepsilon}(z) \: \dz	\notag	\\
&\hphantom{=} + \int_{\vartheta-\imag R}^{\vartheta+\imag R} e^{tz} \cL  \bU_{\!\varepsilon}(z) \: \dz
- \int_{\vartheta-\imag R}^{\sigma-\imag R} e^{t z} \cL \bU_{\!\varepsilon}(z) \: \dz.
\label{eq:residue thm rearranged}
 \end{align}
The first integral on the right-hand side of \eqref{eq:residue thm rearranged} can be estimated as follows
\begin{align}
\Bigl\lVert \int_{\vartheta+\imag R}^{\sigma+\imag R} e^{t z}   \cL  \bU_{\!\varepsilon}(z) \: \dz \Bigl\rVert
&\leq \int_{\vartheta+\imag R}^{\sigma+\imag R} |e^{t z}| \lVert  \cL  \bU_{\!\varepsilon}(z) \rVert \: \dz  \notag \\
&\leq \int_{\vartheta+\imag R}^{\sigma+\imag R} |e^{t z}| | \tfrac{1-e^{-\varepsilon z}}{\varepsilon z^2}| \|(\Ip-\cL \bmu(z))^{-1}\| \dz  \notag \\
&\leq C e^{t  \sigma} \int_{\vartheta}^{\sigma}  \big| \tfrac{1}{\varepsilon (x+\imag R)^2} \big|  \: \dx
\to 0	\quad	\text{as } R \to \infty	\label{eq:1st integral vanishes}
\end{align}
where $C > 0$ is a suitable finite constant the existence of which follows from \eqref{eq:sup over rectangles bound}.
The last integral on the right-hand side of \eqref{eq:residue thm rearranged} can be estimated analogously.
Taking $R\to\infty$, we thus infer
\begin{align}
\bU_{\varepsilon}(t)
&= \frac{1}{2\pi\imag} \int_{\sigma-\imag\infty}^{\sigma+\imag\infty} e^{t z}   \cL \bU_{\!\varepsilon}(z) \, \mathrm{d} z \notag	\\
&= \sum_{\lambda\in\Lambda_{\vartheta}} \text{Res}_{z=\lambda}(e^{t z}   \cL \bU_{\!\varepsilon}(z))
+ \frac{1}{2\pi\imag} \int_{\vartheta-\imag\infty}^{\vartheta+\imag\infty} e^{t z} \cL \bU_{\!\varepsilon}(z) \, \dz.
\label{eq:LT with inversion and residues}
\end{align}
We now focus on the sum on the right-hand side of \eqref{eq:LT with inversion and residues}.
Here, we expand each entry of the matrix $(\Ip-\cL\bmu(z))^{-1}$ into a Laurent series
around the point $z=\lambda\in\Lambda_{\vartheta}$.
Note that $\lambda$ is a root of $\det(\Ip-\cL\bmu(z))=0$. Because
\begin{align*}
(\Ip-\cL\bmu (z))^{-1}=\frac{1}{\det(\Ip-\cL\bmu(z))}\; \adj(\Ip-\cL\bmu(z)),
\end{align*}
each entry of $(\Ip-\cL\bmu(z))^{-1}$ has a pole with multiplicity at most
the multiplicity of the zero of $\det(\Ip-\cL\bmu(z))$ at $z=\lambda$.
Recall that $\k(\lambda)\in\N$ denotes the maximal multiplicity of the poles
of the entries of $(\Ip-\cL\bmu (z))^{-1}$ at $\lambda$.
Let us define $a^{ij}_{\lambda,k}$ as the coefficient in front of $(z-\lambda)^{-k}$ in the Laurent series of the $(i,j)$-th entry in $(\Ip-\cL\bmu(z))^{-1}$ around the point $\lambda\in\Lambda_{\theta}$.
In particular, $a^{ij}_{\lambda,k}=0$ for $k>\k(\lambda)$.
Summarizing, let us consider
\begin{align*}
A_{\lambda,k} \defeq (a^{ij}_{\lambda,k})_{i,j\in\p}
\end{align*}
as the matrix of all coefficients in front of $(z-\lambda)^{-k}$, see also \eqref{eq:expansion of I_p-Lmu inverse at lambda}.
Then, for $\lambda \in \Lambda_\vartheta$,
\begin{align}
\text{Res}_{z=\lambda}(e^{t z}   \cL \bU_{\!\varepsilon}(z))
&= \text{Res}_{z=\lambda}(e^{t z}  \cL g_{\varepsilon}(z)  (\Ip- \cL\bmu(z))^{-1}) \notag \\
&= e^{\lambda t} \sum\limits_{\substack{n,l\geq 0, \\ n+l<\k(\lambda)}} \frac{t^l}{l !}   \frac{1}{n !} \; \cL  g^{(n)}_{\varepsilon}(\lambda)  A_{\lambda,n+l+1}.
\label{eq:residue with epsilon}
\end{align}
Here, in order to arrive at \eqref{eq:residue with epsilon},
one has to determine the residue of $e^{t z}  \cL g_{\varepsilon}(z)  (\Ip-\cL\bmu(z))^{-1}$ at the point $z=\lambda$.
To this end, one can compute the corresponding Laurent series around $z=\lambda$,
as the residue corresponds to the coefficient with index $-1$.
This Laurent series can be obtained by multiplying the Laurent series of $e^{t z}$, $\cL g_{\varepsilon}(z)$,
and $(\Ip-\cL\bmu(z))^{-1}$ around $z=\lambda$.
We have
\begin{equation*}
e^{t z}=\sum_{l=0}^{\infty} \frac{t^l  e^{t\lambda}}{l!} \; (z-\lambda)^l
\quad	\text{and}	\quad
\cL g_{\varepsilon}(z)=\sum_{n=0}^{\infty}c_n \;(z-\lambda)^n
\end{equation*}
with coefficients $c_n=\frac{1}{n!}\cL g_{\varepsilon}^{(n)}(\lambda)$ for $n\in\N_0$ and $\lambda>0$.
The series above are then multiplied, and for the residue, we need the coefficient of $(z-\lambda)^{-1}$.
In particular, for $n+l\geq\k(\lambda)$, the coefficient matrix $A_{\lambda,n+l+1}$ is the zero matrix,
which means that only summands with indices with $n+l<\k(\lambda)$ contribute.

Write $g_{\varepsilon} = \1_{[0,\infty)} + g_{\varepsilon}-\1_{[0,\infty)}$ and use the linearity of the Laplace transform
to infer $\cL g_{\varepsilon} = \cL \1_{[0,\infty)} + \cL(g_{\varepsilon}-\1_{[0,\infty)})$,
we can split the right-hand side of \eqref{eq:residue with epsilon} as follows
\begin{align}
e^{\lambda  t} \!\!\!\!\!  \sum\limits_{\substack{n,l\geq 0, \\ n+l<\k(\lambda)}} \frac{t^l}{l !}   \frac{1}{n !}  \cL g^{(n)}_{\varepsilon}(\lambda) A_{\lambda,n+l+1}
&= e^{\lambda t} \!\!\!\!\! \sum\limits_{\substack{n,l\geq 0, \\ n+l<\k(\lambda)}} \frac{t^l}{l !}  \frac{1}{n !} (\cL  \1_{[0,\infty)})^{(n)}(\lambda) A_{\lambda,n+l+1} \notag \\
&\hphantom{=}+e^{\lambda t} \!\!\!\!\! \sum\limits_{\substack{n,l\geq 0, \\ n+l<\k(\lambda)}} \frac{t^l}{l !} \frac{1}{n !} (\cL  (g_{\varepsilon}-\1_{[0,\infty)}))^{(n)}(\lambda) A_{\lambda,n+l+1}.
\label{eq:f_eps and f}
\end{align}
We continue by estimating the coefficients $\frac{1}{n !} \; (\cL  (g_{\varepsilon}-\1_{[0,\infty)}))^{(n)}(\lambda)$, $n \in \N_0$
of the Laurent series of $\cL(g_{\varepsilon}-\1_{[0,\infty)})$
around $z=\lambda$.
We have
\begin{align}
\cL(\1_{[0,\infty)}-g_\varepsilon)(z)
&= \frac1z - \frac{1-e^{-\varepsilon z}}{\varepsilon z^2}
= \frac{1}{\varepsilon z^2} \sum_{k=2}^\infty \frac{(-\varepsilon z)^k}{k!}
= \sum_{k=0}^\infty \frac{(-1)^k}{(k+2)!} \varepsilon^{k+1} z^k	\label{eq:estimate on smoothing diff Laplace}	\\
&= \sum_{k=0}^\infty \frac{(-1)^k}{(k+2)!} \varepsilon^{k+1} (z-\lambda+\lambda)^k
= \sum_{k=0}^\infty \frac{(-1)^k}{(k+2)!} \varepsilon^{k+1} \sum_{n=0}^k \binom{k}{n} \lambda^{k-n} (z-\lambda)^n	\notag	\\
&= \sum_{n=0}^\infty \bigg(\sum_{k=n}^\infty \frac{(-1)^k}{(k+2)!} \varepsilon^{k+1} \binom{k}{n} \lambda^{k-n}\bigg) (z-\lambda)^n	\notag	\\
&= \sum_{n=0}^\infty \bigg(\sum_{k=0}^\infty \frac{1}{(k+n+1)(k+n+2)} \frac{(-\varepsilon \lambda)^{k}}{k!} \bigg) \frac{(-1)^n \varepsilon^{n+1}}{n!} (z-\lambda)^n.	\notag
\end{align}
In particular, the coefficient of $(z-\lambda)^n$ of $\cL(\1_{[0,\infty)}-g_\varepsilon)(z)$ is bounded in absolute value by
\begin{equation*}
\bigg|\frac{1}{n!} \; (\cL  (g_{\varepsilon}-\1_{[0,\infty)}))^{(n)}(\lambda)\bigg|
\leq \bigg|\sum_{k=0}^\infty \frac{1}{(k+n+1)(k+n+2)} \frac{(-\varepsilon \lambda)^{k}}{k!} \frac{(-1)^n \varepsilon^{n+1}}{n!}\bigg|
\leq e^{\varepsilon |\lambda|} \frac{\varepsilon^{n+1}}{(n+2)!}.
\end{equation*}
We conclude that
\begin{align*}
\bigg|e^{\lambda t}\sum\limits_{\substack{n,l\geq 0, \\ n+l<\k(\lambda)}} \frac{t^l}{l !} \;  \frac{1}{n !} \; (\cL  (g_{\varepsilon}-\1_{[0,\infty)}))^{(n)}(\lambda) \; a^{ij}_{\lambda,n+l+1}\bigg|
&\leq e^{\Real(\lambda) t}\sum\limits_{\substack{n,l\geq 0, \\ n+l<\k(\lambda)}} \frac{t^l}{l !} \;  e^{\varepsilon |\lambda|} \frac{\varepsilon^{n+1}}{(n+2)!} \; |a^{ij}_{\lambda,n+l+1}|	\\
&= \varepsilon O(e^{\Real(\lambda) t} t^{\k(\lambda)-1})	\quad	\text{as } t \to \infty
\end{align*}
where $O(e^{\Real(\lambda) t} t^{\k(\lambda)-1})$ is an error term independent of $\varepsilon \in (0,1]$.
Summarizing,
\begin{align}
\text{Res}_{z=\lambda}(e^{t z} \cL \bU_{\!\varepsilon}(z))
&= e^{\lambda t} \sum \limits_{\substack{n,l\geq 0, \\ n+l<\k(\lambda)}} \frac{t^l}{l !}   \frac{1}{n !}  (\cL \1_{[0,\infty)})^{(n)}(\lambda)  A_{\lambda,n+l+1} \nonumber \\
&\hphantom{=}+e^{\lambda  t} \sum\limits_{\substack{n,l\geq 0, \\ n+l<\k(\lambda)}} \frac{t^l}{l !}    \frac{1}{n !} \cL (g_{\varepsilon}-\1_{[0,\infty)})^{(n)}(\lambda) A_{\lambda,n+l+1} \nonumber  \\
&= e^{\lambda t} \sum_{l=0}^{\k(\lambda)-1} t^l C_{\lambda, l}+\varepsilon O(e^{\Real(\lambda) t} t^{\k(\lambda)-1}),
\label{eq:expansion of residues}
\end{align}
where $C_{\lambda, l}$ is a $p \times p$ matrix (depending only on the parameters $\lambda$ and $l$)
and $O(e^{\Real(\lambda) t} t^{\k(\lambda)-1})$
is, in slight abuse of notation,
shorthand for a $p \times p$ matrix every entry of which is $O(e^{\Real(\lambda) t} t^{\k(\lambda)-1})$ as $t \to \infty$.

Next, let us focus on the integral on the right-hand side of \eqref{eq:LT with inversion and residues}.
Write $C<\infty$ for the supremum in \eqref{eq:sup||(Ip-Lmu(theta+ieta)^-1||<infty}.
Then
\begin{align}
\bigg\| \int_{\vartheta-\imag\infty}^{\vartheta+\imag\infty} e^{t z}  \cL \bU_{\!\varepsilon}(z) \, \dz \biggl\|
&\leq \int_{\vartheta-\imag\infty}^{\vartheta+\imag\infty} |e^{t z}|  \lVert  \cL \bU_{\!\varepsilon}(z)  \rVert  \, \dz \nonumber  \\
&\leq \int_{\vartheta-\imag\infty}^{\vartheta+\imag\infty} |e^{t z}|  | \tfrac{1-e^{-\varepsilon z}}{\varepsilon z^2} |  \lVert (\Ip-\cL\bmu(z))^{-1} \rVert \,  \dz \nonumber \\
&\leq C \, e^{t\vartheta}  \int_{\vartheta-\imag\infty}^{\vartheta+\imag\infty} \tfrac{|1-e^{-\varepsilon  z}|}{|\varepsilon z^2|} \, \dz \nonumber \\
&\leq C \,  e^{t\vartheta}  \int_{\vartheta-\imag\infty}^{\vartheta+\imag\infty} \tfrac{|\varepsilon  z| \wedge 2 }{|\varepsilon z^2|} \, \dz \nonumber \\
&\leq 2C \, e^{t\vartheta}  \int_{\varepsilon\vartheta-\imag\infty}^{\varepsilon\vartheta+\imag\infty} |z|^{-1} \wedge |z|^{-2} \, \dz \nonumber \\
&\leq 2C \, e^{t\vartheta} \int_{-\infty}^{\infty} x^{-1}\wedge x^{-2} \wedge (\varepsilon  |\vartheta|)^{-1} \, \dx \nonumber \\
&\leq 2C_\vartheta \, e^{t\vartheta} (|\log(\varepsilon)|+1).
\label{eq:2nd integral bound}
\end{align}
The constant $C_\vartheta$ in the last step does not depend on either $t$ or $\varepsilon$.
If we replace $t$ by $t+\varepsilon$ in \eqref{eq:expansion of residues},
we get
\begin{align}
\Big\|&\text{Res}_{z=\lambda}(e^{(t+\varepsilon) z}  \cL \bU_{\!\varepsilon}(z))-\text{Res}_{z=\lambda}(e^{t z}  \cL \bU_{\!\varepsilon}(z))\Big\|  \nonumber \\
&= e^{\Real(\lambda)t}\bigg\| e^{\lambda \varepsilon} \sum_{l=0}^{\k(\lambda)-1} (t+\varepsilon)^l \; C_{\lambda, l}
- \sum_{l=0}^{\k(\lambda)-1} t^l  C_{\lambda, l}\bigg\| +\varepsilon  O(e^{\Real(\lambda) t} t^{\k(\lambda)-1}) \nonumber \\
&= \varepsilon O(e^{\Real(\lambda) t} t^{\k(\lambda)-1})
\label{eq:Res at t+eps-Res at t}
\end{align}
\commentblock{
and, similarly, using \eqref{eq:expansion of residues, lambda=0}
\begin{align}
\big\|&\text{Res}_{z=0}(e^{(t+\varepsilon) z} \, \cL \bU_{\!\varepsilon}(z))-\text{Res}_{z=0}(e^{t z} \, \cL \bU_{\!\varepsilon}(z))\big\|
= \varepsilon \; O(t^{\k(0)})
\label{eq:Res at t+eps-Res at t, lambda=0}
\end{align}
for $\lambda=0$.}
Combining \eqref{eq:2nd integral bound}, \eqref{eq:Res at t+eps-Res at t}, we infer
\begin{align*}
0 \leq \|\bU_{\!\varepsilon}(t+\varepsilon)-\bU_{\!\varepsilon}(t)\|
\leq \sum_{\lambda \in \Lambda_\vartheta}O(\varepsilon e^{\Real(\lambda) t} t^{\k(\lambda)} + |\log(\varepsilon)| \; e^{t\vartheta} ).
\end{align*}
From this, \eqref{eq:U_eps(t) leq U(t) leq U_eps(t+eps)}, \eqref{eq:LT with inversion and residues} and with $\varepsilon \defeq e^{-\sigma t}$, we finally deduce
\begin{align*}
\bU(t)=\sum_{\lambda\in\Lambda_{\vartheta}}e^{\lambda  t} \sum_{k=0}^{\k(\lambda)-1} C_{\lambda,k} t^k + O(t e^{\vartheta t}).
\end{align*}

\noindent
It remains to remove the additional assumption that $\vartheta > 0$.
We do this using an exponential tilting.
Suppose that $\vartheta \leq 0$.
Then choose an auxiliary parameter $\gamma < \vartheta$ and define $\bmu^\gamma(\dx) \defeq e^{-\gamma x} \bmu(\dx)$
as well as $\bU^\gamma(\dx) \defeq e^{-\gamma x} \bU(\dx)$.
Then
\begin{equation*}
\bU^\gamma(\dx) = \sum_{n=0}^\infty (\bmu^\gamma)^{*n}(\dx).
\end{equation*}
Further, $\cL\bmu^\gamma(z) = \cL\bmu(z+\gamma)$ for all $z \in \C$ with $z+\gamma \in \Dom(\cL\bmu)$.
In particular, $\Dom(\cL\bmu^\gamma) = \Dom(\cL\bmu)-\gamma$
and, similarly, $\Lambda^\gamma \defeq \{\lambda \in \Dom(\cL\bmu^\gamma): \det(\Ip-\cL\bmu^\gamma(\lambda))=0\} = \Lambda-\gamma$ as well as $\Lambda^\gamma_{\vartheta-\gamma} = \Lambda_\vartheta-\gamma$.
Since $\vartheta - \gamma > 0$, we have an asymptotic expansion for $\bU^\gamma$, namely,
\begin{align*}
\bU^\gamma(t)
=\sum_{\lambda\in\Lambda_{\vartheta}} e^{(\lambda-\gamma) t} \sum_{k=0}^{\k(\lambda)-1} C_{\lambda,k}^\gamma t^k + O(t e^{(\vartheta-\gamma) t})
\end{align*}
where $\k(\lambda) = \mathtt{k}^\gamma(\lambda-\gamma)$ in obvious notation should be noticed
and $C_{\lambda,k}^\gamma$ is the coefficient matrix for the term $e^{\lambda t} t^k$ in the asymptotic expansion of $\bU^\gamma$.
Hence, using integration by parts,
\begin{align*}
\bU(t)
&= \int_{[0,t]} e^{\gamma x} \, \bU^\gamma(\dx)
= \int_{[0,t]} \int_0^x \gamma e^{\gamma y} \, \dy \, \bU^\gamma(\dx)
= \int_0^t  (\bU^\gamma(t)-\bU^\gamma(y)) \gamma e^{\gamma y} \, \dy	\\
&= e^{\gamma t} \bU^\gamma(t) - \int_0^t  \bU^\gamma(y) \gamma e^{\gamma y} \, \dy	\\
&=\sum_{\lambda\in\Lambda_{\vartheta}} e^{\lambda t} \sum_{k=0}^{\k(\lambda)-1} C_{\lambda,k}^\gamma t^k + O(t e^{\vartheta t})
- \gamma \sum_{\lambda\in\Lambda_{\vartheta}} \sum_{k=0}^{\k(\lambda)-1} C_{\lambda,k}^\gamma \int_0^t e^{\lambda y} y^k \, \dy
- \int_0^t O(y e^{\vartheta y}) \, \dy	\\
&= \sum_{\lambda\in\Lambda_{\vartheta}} \sum_{k=0}^{\k(\lambda)-1} C_{\lambda,k}^\gamma
\bigg(e^{\lambda t}  t^k
- \gamma \int_0^t e^{\lambda y} y^k \, \dy\bigg)
+ O(t e^{\vartheta t}).
\end{align*}
Now use that \eqref{eq:int_0^tx^ke^lambdaxdx} in the case $\lambda \not = 0$
and the elementary fact that $\int_0^t y^k \, \dy = \frac1{k+1} t^{k+1}$ if $\lambda = 0$
and rearrange terms to infer an expansion of the form \eqref{eq:asymptotic expansion U(t) non-lattice}.
\end{proof}

\begin{proof}[Proof of Theorem \ref{Thm:Markov renewal theorem F(t) non-lattice}]
First suppose that $\theta >0$.
By Theorem \ref{Thm:Markov renewal theorem U(t) non-lattice}
there exist matrices
$C_{\lambda,k} \in \R^{p \times p}$, $k=0,\ldots,\k(\lambda)-1$, $\lambda \in \Lambda_\vartheta$
such that
\begin{equation*}
\bU(t)
= \sum_{\lambda\in\Lambda_{\theta}}e^{\lambda  t} \sum_{k=0}^{\k(\lambda)-1} \!\!\! C_{\lambda, k} t^k +  O(e^{\theta t})
\quad	\text{as } t \to \infty.
\end{equation*}
Now let $f: \R \to \R^p$ be a nonnegative, right-continuous function, vanishing on the negative halfline
and with non-decreasing components on the positive halfline satisfying
\begin{equation*}
\int_{0}^\infty \!\!\! f(x)e^{-\theta x} \, \dx < \infty 
\end{equation*}
where this inequality has to be understood componentwise.
Then we may view $f$ as the measure-generating function of a column vector $\nu$ of locally finite measures on $[0,\infty)$
by letting
\begin{equation*}
f(y) \eqdef \nu([0,y]) = \int \1_{[x,\infty)}(y) \, \nu(\dx),	\quad	y \geq 0.
\end{equation*}
For the associated solution $F = \bU*f$ of the Markov renewal equation \eqref{eq:Markov renewal},
we infer
\begin{align}
F(t)
&= \bU*f(t) = \int\limits_{[0,\infty)} \bU(\dy) \, \nu([0,t-y])=\int \limits_{[0,t]} \bU(t-x) \, \nu(\dx) 	\notag	\\
&=
\int \limits_{[0,t]} \bigg(\!\sum_{\lambda\in\Lambda_{\theta}}e^{\lambda  (t-x)} \sum_{k=0}^{\k(\lambda)-1} \!\!\! C_{\lambda, k} (t-x)^k
+ O(e^{\theta (t-x)}) \! \bigg) \, \nu(\dx)	\notag	\\
&=
\sum_{\lambda\in\Lambda_{\theta}} \sum_{k=0}^{\k(\lambda)-1} \!\!\! C_{\lambda, k} e^{\lambda t} \int \limits_{[0,t]} (t-x)^k e^{-\lambda x} \, \nu(\dx)
+ O( e^{\theta t})	\label{eq:expansion 1st}
\end{align}
where the remainder term is $O( e^{\theta t})$ as $t \to \infty$ since 
\begin{align*}
\int \limits_{[0,t]} e^{\theta (t-x)} \, \nu(\dx) 
\leq e^{\theta t} \int _{[0,\infty)} e^{-\theta x} \, \nu(\dx)
= e^{\theta t} \int f(x) \theta e^{-\theta x} \, \dx < \infty.
\end{align*}
Further, for any $k \in \N$,
\begin{align*}
\int \limits_{[0,t]} (t-x)^k e^{-\lambda x} \, \nu(\dx)
&= \int \limits_{[0,t]} \int_x^t \big(k(t-y)^{k-1} + \lambda (t-y)^k\big) e^{-\lambda y} \, \dy \, \nu(\dx)	\\
&= k \int_{0}^t f(y) (t-y)^{k-1} e^{-\lambda y} \, \dy
+ \lambda \int_0^t f(y) (t-y)^k e^{-\lambda y} \, \dy
\end{align*}
and similarly
\begin{align*}
\int \limits_{[0,t]}  e^{-\lambda x} \, \nu(\dx)
&= f(t)e^{-\lambda t} + \lambda \int_{0}^tf(y) e^{-\lambda y}\, \dy.
\end{align*}
Hence, rearranging the terms in \eqref{eq:expansion 1st} and defining the $B_{\lambda,k}$ appropriately
gives \eqref{eq:expansion F(t) non-lattice}.
Finally, for $k \in \N_0$, we expand
\begin{align*}
\int \limits_0^t f(x) (t-x)^k e^{-\lambda x} \, \dx
&= \sum_{j=0}^k (-1)^{k-j} \binom{k}{j} t^j \int \limits_0^t f(x) x^{k-j} e^{-\lambda x} \, \dx
\end{align*}
and notice that
\begin{align*}
\int \limits_0^t x^{k-j} e^{-\lambda x} \, \dx
&= \int \limits_0^\infty f(x) x^{k-j} e^{-\lambda x} \, \dx - \int \limits_t^\infty f(x) x^{k-j} e^{-\lambda x} \, \dx
\end{align*}
where
\begin{align*}
\bigg|e^{\lambda t} \int \limits_t^\infty f(x) x^{k-j} e^{-\lambda x} \, \dx\bigg|
&\leq \int \limits_t^\infty f(x) x^{k-j} e^{\Real(\lambda) (t-x)} \, \dx \leq  e^{(\theta+\varepsilon) t} \int \limits_t^\infty f(x) e^{-\theta x} \, \dx
\end{align*}
for all sufficiently large $t$ and $0 < \varepsilon \leq \Real(\lambda)-\theta$.
On the other hand, 
\begin{align*}
	f(t)\le e^{\theta (t+1)}\int_{t}^{t+1}f(x)e^{-\theta x} \, \dx=O(e^{\theta t}).
\end{align*}
Using this in \eqref{eq:expansion F(t) non-lattice}
gives \eqref{eq:expansion F(t) non-lattice 2}.
The proof can be extended to the case $\theta < 0$ using a similar strategy as in proof of the Theorem \ref{Thm:Markov renewal theorem U(t) non-lattice}. We refrain from providing further details.

Finally, if $f: [0,\infty) \to \R^p$ is a function satisfying \eqref{eq:int e^-theta x f(x) < infty},
we can write it as a difference $f=f_+-f_-$ of non-decreasing, right-continuous functions.
This is the Jordan decomposition of $f$. The identity $\mathrm{V}\! f (x) = f_+(x)+f_-(x)$
imply that both, $f_+$ and $f_-$ satisfy \eqref{eq:int e^-theta x f(x) < infty}.
The previous proof can therefore be applied to both, $f_+$ and $f_-$,
and then extends by linearity to $f$.
\end{proof}

\subsection{Asymptotic expansions in the lattice case}	\label{subsec:asymptotic expansions lattice}

\begin{proof}[Proof of Lemma \ref{Lem: Multiplicities of lattice/non-lattice}]
Let $\lambda \in \Lambda$ and $A_{\lambda, k(\lambda)}$ be as in \eqref{eq:expansion of I_p-Lmu inverse at lambda}.
By definition, $A_{\lambda, k(\lambda)}$ is finite and has a non-zero entry.
We show that $(\Ip-\cG\bmu(z))^{-1}$ also has a pole with multiplicity $\k(\lambda)$ at $e^{-\lambda}$.
Indeed,
\begin{align}
\lim_{z\to e^{-\lambda}} (\Ip-\cG\bmu(z))^{-1}(z-e^{-\lambda})^{\k(\lambda)} &= \lim_{e^{-z}\to e^{-\lambda}} (\Ip-\cG\bmu(e^{-z}))^{-1}(e^{-z}-e^{-\lambda})^{\k(\lambda)} \nonumber \\
&=  \lim_{z \to \lambda} (\Ip-\cL\bmu(z))^{-1}(z-\lambda)^{\k(\lambda)} \frac{(e^{-z}-e^{-\lambda})^{\k(\lambda)}}{(z-\lambda)^{\k(\lambda)}}
\nonumber  \\
&= A_{\lambda, \k(\lambda)} (-e^{-\lambda})^{\k(\lambda)} \nonumber.
\end{align}
\end{proof}

\begin{proof}[Proof of Theorem \ref{Thm:Markov renewal theorem F(n) lattice}]
For $r>0$, let $B_r\defeq \{ |z|<r\}$ be the open disc with radius $r$ in $\C$
centered at the origin and $\partial B_r\defeq \{ |z|=r\}$ its boundary.
Now fix $r<e^{-\alpha}$.
Since $\cG\bmu$ has holomorphic entries on $B_{e^{-\vartheta}}$
and $\cG f$ has holomorphic entries on $B_{e^{-\theta}} \subseteq B_{e^{-\vartheta}}$
by \eqref{eq:sum e^-theta n f(n) < infty},
we infer from Cauchy's integral formula and the convolution theorem for generating functions that
\begin{align}
\bmu^{*l}*f(n)=\frac{1}{2\pi\imag} \int_{\partial B_r} \frac{\cG(\bmu^{*l}*f)(z)}{z^{n+1}} \, \dz
=\frac{1}{2\pi\imag} \int_{\partial B_r} \frac{(\cG\bmu)^l(z)\cG f(z)}{z^{n+1}} \, \dz.
\label{eq: Cauchy integral formula}
\end{align}
In particular,
\begin{align}
F(n)&=\bU * f(n)=\sum_{l=0}^{\infty} \bmu^{*l} * f(n)=\sum_{l=0}^{\infty}\frac{1}{2\pi\imag} \int_{\partial B_r} \frac{(\cG\bmu)^l(z) \cG f(z)}{z^{n+1}} \, \dz \notag \\
&=\frac{1}{2\pi\imag} \int_{\partial B_r} \frac{(\Ip-\cG\bmu(z))^{-1} \cG f(z)}{z^{n+1}} \, \dz
\label{eq: F(n)}
\end{align}
where the last step follows from Lemma \ref{Lem:rho_A<1}
since $\rho_{\cG\bmu(z)}<1$ for $|z|=r<e^{-\alpha}$. For $\lambda\in\Lambda_\vartheta$,
recall that $b^{ij}_{\lambda,n}$ is the coefficient of $(z-e^{-\lambda})^{-n}$ in the Laurent series of the $(i,j)$-th entry
of $(\Ip-\cG\bmu(z))^{-1}$ at $z=e^{-\lambda}$.
In particular, $b^{ij}_{\lambda,n}=0$ for $n>\k(\lambda)$, where $\k(\lambda)$ is the multiplicity of the singularity $\lambda$
in $(\Ip-\cG\bmu(z))^{-1}$.
Further, $B_{\lambda,k}= (b^{ij}_{\lambda,k})_{i,j\in\p}$.
We write $\cG f(z)=\sum_{l=0}^{\infty}f_l(e^{-\lambda}) (z-e^{-\lambda})^l$
with coefficients $f_l(e^{-\lambda})=\frac{1}{l!}(\cG f)^{(l)}(e^{-\lambda})$ for $l\in\N_0$.
Define the vectors $G(z)$ and $H(z)$ by 
\begin{align*}
G(z)&\defeq \sum_{\lambda\in\Lambda_\theta} \sum_{k=1}^{\k(\lambda)} B_{\lambda,k} \sum_{l=0}^{k-1} f_{l}(e^{-\lambda}) (z-e^{-\lambda})^{-k+l}, \\
H(z)&\defeq (\Ip-\cG(z))^{-1} \cG f(z) -G(z).
\end{align*}
Here $G$, is meromorphic in $\C$ with poles in $\Lambda_\vartheta$
and $H$ is holomorphic in a neighborhood of $B_{e^{-\vartheta}}$.
Further, $(\Ip-\cG(z))^{-1}\cG f(z)=G(z)+H(z)$, $z \in B_{e^{-\vartheta}} \setminus \Lambda_\vartheta$.
Consequently, with \eqref{eq: F(n)}, we infer
\begin{align}
F(n)= \frac{1}{2\pi\imag} \int_{\partial B_r} \frac{G(z)+H(z)}{z^{n+1}} \, \dz.
\label{eq: F(n) explicit}
\end{align}
By the residue theorem, for any $d\in\N_0$,
\begin{align*}
 \frac{1}{2\pi\imag} \int_{\partial B_r}  \frac{(z-e^{-\lambda})^{-d}}{z^{n+1}} \, \dz
 &= (-1)^d e^{\lambda (d + n)} \binom{n+d-1}{d-1}.
\end{align*}
Consequently,
\begin{align*}
 \frac{1}{2\pi\imag} \int_{\partial B_r} \frac{G(z)}{z^{n+1}}\dz
 &=  \sum_{\lambda\in\Lambda_\theta} \sum_{k=1}^{\k(\lambda)} B_{\lambda,k} \sum_{l=0}^{k-1} f_{l}(e^{-\lambda})  (-e^\lambda)^{-k+l} e^{\lambda n} \binom{n+k-l-1}{k-l-1} \\
&=\sum_{\lambda\in\Lambda_\theta}e^{\lambda n} \sum_{k=0}^{\k(\lambda)-1}\binom{n+k}{k} \sum_{l=0}^{\k(\lambda)-1}  B_{\lambda,k+l+1}f_{l}(e^{-\lambda})  (-e^\lambda)^{k+1}	\\
 &=\sum_{\lambda\in\Lambda_\theta}e^{\lambda n} \sum_{k=0}^{\k(\lambda)-1}b_{\lambda,k,f}n^k.
\end{align*}
Here, the vectors $b_{\lambda,k,f}$ depend only on the parameters $\lambda$, $k$ and $f$.
Now, we consider the disc $B_{e^{-\vartheta}}$ and observe that the matrix $H(z)$ has holomorphic entries.
Thus, with $\theta_n \defeq \theta + \frac1n$, we infer
\begin{align}
\bigg\| \int_{\partial B_{e^{-r}}} \frac{H(z)}{z^{n+1}} \, \dz \biggl\|
= \bigg\| \int_{\partial B_{e^{-\theta_n}}} \frac{H(z)}{z^{n+1}} \, \dz \biggl\|
\leq
\sup_{z \in B_{e^{-\theta}}} \|H(z)\| \frac1{(e^{-\theta_n})^n} = O(e^{\theta n})	\quad	\text{as } n \to \infty
\end{align}
and with \eqref{eq: F(n) explicit},
\begin{align}
F(n)=\sum_{\lambda\in\Lambda_\theta}e^{\lambda n} \sum_{k=0}^{\k(\lambda)-1} n^k b_{\lambda,k,f} + O(e^{\theta n}).
\end{align}
\end{proof}

\begin{proof}[Proof of Corollary \ref{Cor:Markov renewal theorem lattice}]
Suppose that (A\ref{ass:first moment}) and (A\ref{ass:subcritical instant offspring}) hold.
First consider the case $\vartheta > 0$
and fix $j \in \p$. Define $F_j(n) \defeq \bU(\{n\}) \e_j$ where $\e_j$ is the $j$th unit vector.
From $\bU(\{n\}) = \Ip + \bmu * \bU(\{n\})$ we infer
\begin{equation*}
F_j(n) = \e_j + \bmu*F_j(n),	\quad	n \in \N_0.
\end{equation*}
Since $\vartheta > 0$, we have $\sum_{n=0}^\infty \e_j e^{-\vartheta n} < \infty$.
We may thus apply Theorem \ref{Thm:Markov renewal theorem F(n) lattice} to infer
\begin{equation*}	
F_j(n) = \sum_{\lambda\in\Lambda_\theta} e^{\lambda n} \sum_{k=0}^{\k(\lambda)-1} n^k b_{\lambda,k,j}+O(e^{\vartheta n})
\quad	\text{as } n \to \infty.
\end{equation*}
Notice that we can apply the theorem in a way that the error term is of the order $O(e^{\vartheta n})$ since
$\vartheta$ is an inner point of $\Dom(\cL\bmu)$ by (A\ref{ass:first moment}).
Write $C_{\lambda,k}$ for the matrix with columns $b_{\lambda,k,1},\ldots,b_{\lambda,k,p}$. Then
\begin{equation*}	
\bU(\{n\}) = \sum_{\lambda\in\Lambda_\theta} e^{\lambda n} \sum_{k=0}^{\k(\lambda)-1} n^k C_{\lambda,k}+O(e^{\vartheta n})
\quad	\text{as } n \to \infty
\end{equation*}
as claimed.
\end{proof}

\section{Application to general branching processes}	\label{sec:applications}

The Markov renewal equation appears in the study of general branching processes.
We continue with a brief introduction to this class of processes
and refer to \cite{Jagers:1975} for a more detailed introduction.

\subsection{The multi-type general branching process}	\label{subsec:General branching process}

In this section, we formally introduce the multi-type general (Crump-Mode-Jagers) branching process.
The process starts with a single individual, the ancestor, born at time $0$.
The ancestor gets the label $\varnothing$, the empty tuple, and, like every individual in the process,
has one of finitely many types $\tau(\varnothing) \in \p$ where $p \in \N$ is the number of different types.
We write $\bxi=(\xi^{i,j})_{i,j = 1,\ldots,p}$ for the $p \times p$ matrix of point processes $\xi^{i,j}$, $i,j\in\p$
that determine the ancestor's reproduction. Here, a point process is a random locally finite point measure, see \cite[Chapter 3]{Resnick:2008}.
If $\tau(\varnothing) = i$, i.e., if the type of the ancestor is $i$,
then its children of type $j$ are born at the times of the point process $\xi^{i,j}$,
$j=1,\ldots,p$.
We enumerate the children of the ancestor with the labels $1,2,\ldots,N$ where
$N$ is the total number of offspring of the ancestor
and can formally be expressed as
$N = \sum_{j=1}^p \xi^{ij}([0,\infty)) \in \N_0 \cup \{\infty\}$ on $\{\tau(\varnothing)=i\}$.
We write $X_k$ and $\tau(k)$ for the birth time and type of individual $k$, respectively.
We can make it so that the enumeration is such that $0 \leq X_1 \leq X_2 \leq \ldots$.
For completeness, we write $X_k = \infty$ if individual $k$ is never born.

The multi-type general branching process is now obtained in that the children of the ancestor
produce children according to independent copies of the point process $\bxi$
who in turn have children who reproduce according to further independent copies of $\bxi$, and so on.

We can then assign a label from the infinite Ulam-Harris tree $\I \defeq \bigcup_{n \in \N_0} \N^n$ to each individual
$u=(u_1,\ldots,u_n) = u_1\ldots u_n \in \I$
according to its ancestral line, namely,
\begin{align*}
\varnothing \rightarrow u_1 \rightarrow u_1u_2 \rightarrow ... \rightarrow u_1...u_n=u,
\end{align*} 
where $u_1$ is the $u_1$-th child of the ancestor, $u_1u_2$ is the $u_2$-th child of $u_1$, and so on.
We write $|u|=n$ if $u \in \N^n$.

Now let $\bxi_u=(\xi_u^{i,j})_{i,j = 1,\ldots,p}$, $u \in \I \setminus \{\varnothing\}$
be a family of i.\,i.\,d.\ copies of $\bxi \eqdef \bxi_\varnothing$.
We suppose that $(\Omega,\A,\Prob)$ is a probability space on which all $\bxi_u$, $u \in \I$ are defined and i.\,i.\,d.\ and
independent of $\tau(\varnothing)$, the random variable that gives the ancestor's type.
We write $\Prob^i$ if the ancestor's type is $i \in \p$.
We denote the associated expected value operators by $\E$ and $\E^i$, respectively.

Each individual $u \in \familytree$ that is ever born has a clearly defined type $\tau(u) \in \p$
and a time of birth $S(u)$, namely, $S(\varnothing)=0$ and, recursively,
\begin{align*}
S(uk) = S(u) + X_{u,k},	\quad	u \in \I,\, k \in \N.
\end{align*}
For completeness, we write $S(u)=\infty$ if individual $u$ is never born.

Now suppose that for each $u \in \I$ there exist a random vector $\bzeta_u=(\zeta_u^1,\ldots,\zeta_u^p)^\transp$
taking values in $[0,\infty]^p$ and a product-measurable, separable random characteristic
$\bphi_u = (\varphi_u^1,\ldots,\varphi_u^p)^\transp: \Omega \times \R \to \R^{p}$
(taking values in the space of $p$-dimensional real column vectors)
such that the $(\bxi_u,\bzeta_u)$, $u \in \I$ are i.i.d.\ and the $\bphi_u$, $u \in \I$ are identically distributed.
Further, we assume throughout that for every $n \in \N$, the $\bphi_u$, $u \in \N^n$ are independent and independent of
the $(\bxi_u,\bzeta_u,\bphi_u)$, $|u|<n$. These assumptions are satisfied, for instance, when the $(\bxi_u,\bzeta_u,\bphi_u)$, $u \in \I$
are i.\,i.\,d. However, our assumption is weaker and allows $\bphi_u$ to be a function of the $(\bxi_{uv},\bzeta_{uv})$, $v \in \I$.
The variable $\zeta_u^i$ is viewed as the lifetime of the potential individual $u$ given that its type is $i$.
The function $\varphi_u^i(t)$ is some kind of score assigned to the individual $u$ at the age $t$ given that its type is $i$.
Hence, for $u \in \familytree$, we define $\zeta_u \defeq \bzeta_u^{\tau(u)}$ as the lifetime of individual $u$
and $\varphi_u(t) \defeq \varphi_u^{\tau(u)}(t) = \e_{\tau(u)}^\transp \bphi_u(t)$ as the score assigned to individual $u$ at the age $t$.
For $t \geq 0$ and $j \in \p$, define
\begin{align*}
\cZ_t^{j} \defeq \sum_{u \in \familytree} \1_{\{j\}}(\tau(u)) \1_{[0,\zeta_u)}(t-S(u)),
\end{align*}
i.e., $\cZ_t^{j}$ is the number of individuals of type $j$ alive at time $t$. Now write
\begin{align*}
\cZ_t
&\defeq
(\cZ_t^1, \ldots, \cZ_t^p)
\end{align*}
for the associated row vector.
Then $(\cZ_t)_{t \geq 0}$ is the multi-type general branching process.
We are interested in the asymptotic behavior of $\E^i[\cZ_t]$ as $t \to \infty$
for $i=1,\ldots,p$.
More generally, we are interested in the asymptotic behavior of $\E^i[\cZ_t^{\bphi}]$ as $t \to \infty$
for the multi-type general branching process counted with characteristic $\bphi$ defined via
\begin{align*}
\cZ_t^{\bphi} \defeq \sum_{u \in \familytree} \varphi_u(t-S(u)),	\quad	t \in \R.
\end{align*}
Notice that introducing a general score $\bphi$ indeed generalizes the model since we may write
$\cZ_t^j = \cZ_t^{\bphi}$ for $\bphi_u$ with
\begin{align*}
\varphi_u^i(t)
&=	\begin{cases}
	0				&	\text{for } i \not = j,	\\
	\1_{[0,\zeta^j_u)}	&	\text{for } i = j.
	\end{cases}
\end{align*}
Similarly, the number $N_t^j$ of all individuals of type $j$ born up to and including time $t$,
we can write in the form $\cZ_t^{\bphi}$ by choosing $\bphi_u$ as
\begin{align*}
\varphi_u^i(t)
&=	\begin{cases}
	0			&	\text{for } i \not = j,	\\
	\1_{[0,\infty)}	&	\text{for } i = j.
	\end{cases}
\end{align*}

\subsection{Recursive decomposition of general branching processes}	\label{subsec:general branching process}

The expectation of a multi-type Crump-Mode-Jagers counted with random characteristic satisfies a
Markov renewal equation as already observed in \cite{Jagers:1975} in the single-type case.

Suppose that $(\cZ_t^{\bphi})_{t \geq 0}$ is a multi-type Crump-Mode-Jagers process counted with random characteristic
$\bphi$. Define $F_i(t) \defeq \E^i[\cZ_t^{\bphi}(t)]$ for $i=1,\ldots,p$ and $F(t) \defeq (F_1(t),\ldots,F_p(t))^\transp$, $t \in \R$.
If $\bphi$ satisfies appropriate integrability conditions to be specified later, then $F:\R \to \R^p$ vanishes on the negative halfline
and satisfies a Markov renewal equation,
namely, with $S_k(u) = S(ku)-X_k$ on $\{X_k<\infty\}$, by conditioning with respect to the first generation,
\begin{align*}
F_i(t)
&= \E^i[\cZ_t^{\bphi}(t)]
= \E^i\bigg[\sum_{u \in \familytree} \varphi_u(t-S(u)) \bigg]
= \E^i\bigg[\varphi^i(t) + \sum_{k=1}^N \sum_{u \in \familytree} \varphi_{ku}(t-X_k-S_k(u)) \bigg]	\\
&= \E^i[\varphi^i(t)] + \sum_{j=1}^p \int F_j(t-x) \, \mu^{i,j}(\dx)
= f_i(t) + (\bmu * F(t))_i
\end{align*}
where $(\bmu * F(t))_i$ is the $i$-th entry of the vector $\bmu * F(t)$ and $f_i(t) \defeq \E^i[\varphi^i(t)]$.
In other words, $F = f + \bmu * F$, that is, $F$ satisfies \eqref{eq:Markov renewal}
provided that $F(t)$ and $f(t)$ exist and are finite.

Below we give the applications of our main results to the general branching process,
limiting ourselves to the non-lattice case. The lattice case is analogous.

\begin{theorem}	\label{Thm:Markov renewal theorem E[N_t] non-lattice}
Consider a multi-type general branching process with
matrix of intensity measures $\bmu=(\mu^{i,j})_{i,j\in\p}$
satisfying (A\ref{ass:first moment}), (A\ref{ass:subcritical instant offspring}) and
\eqref{eq:sup||(Ip-Lmu(theta+ieta)^-1||<infty}.
Further, suppose that $\Lambda_\vartheta$ is finite.
Write $M(t) = (m_{ij}(t))_{i,j\in \p} = (\E^i[N_t^j])_{i,j \in \p}$ where $\E^i[N_t ^j]$ is the expected number of individuals of type $j$
born up to and including time $t$ given that the ancestor's type is $i$.
Then there exist matrices $C_{\lambda,k} \in \R^{p \times p}$, $k=0,\ldots,\k(\lambda)-1$, $\lambda \in \Lambda_\vartheta$,
$C_{0,\k(0)}$ such that
\begin{equation}	\label{eq:asymptotic expansion E[N_t] non-lattice}
M(t)
= \sum_{\lambda\in\Lambda_{\vartheta}}e^{\lambda  t} \sum_{k=0}^{\k(\lambda)-1} \!\!\! t^k C_{\lambda, k}
+ t^{\k(0)} C_{0,\k(0)} + O(t e^{\vartheta t})
\quad	\text{as } t \to \infty,
\end{equation}
where the error bound $O(t e^{\vartheta t})$ as $t \to \infty$ applies entrywise.
\end{theorem}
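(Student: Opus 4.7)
The plan is to reduce the statement to the renewal matrix expansion already supplied by Theorem \ref{Thm:Markov renewal theorem U(t) non-lattice}. The key observation is that under the stated hypotheses, $M(t) = \bU(t)$ for every $t \geq 0$; once this identification is made, the result is \eqref{eq:asymptotic expansion U(t) non-lattice} with the very same coefficient matrices $C_{\lambda,k}$.

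To verify $M = \bU$, I would first recognise $N_t^j$ as the branching process counted with the random characteristic $\bphi$ whose only non-zero component is $\varphi^j(t) = \1_{[0,\infty)}(t)$, as indicated at the end of Section \ref{subsec:General branching process}. The recursive decomposition carried out at the start of Section \ref{subsec:general branching process} then shows that the $j$-th column $M_{\cdot j}(t) = (m_{ij}(t))_{i \in \p}$ satisfies the Markov renewal equation
\begin{equation*}
M_{\cdot j}(t) = \e_j \1_{[0,\infty)}(t) + \bmu * M_{\cdot j}(t), \quad t \in \R,
\end{equation*}
because $\E^i[\varphi^i(t)]$ equals $\1_{[0,\infty)}(t)$ when $i = j$ and vanishes otherwise. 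Each component $m_{ij}$ is nondecreasing and pointwise finite, the latter because Proposition \ref{Prop:existence and uniqueness MRE}(a) ensures that $\bU(t)$ is locally finite, which is well known to bound the expected number of descendants of any type born by time $t$. Hence, by the uniqueness part of Proposition \ref{Prop:existence and uniqueness MRE}(c), $M_{\cdot j}(t) = \bU * (\e_j \1_{[0,\infty)})(t)$. Stacking the columns gives
\begin{equation*}
M(t) = \bU * (\Ip \1_{[0,\infty)})(t) = \int \Ip \1_{[0,\infty)}(t-x) \, \bU(\dx) = \bU(t) \quad \text{for } t \geq 0.
\end{equation*}

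The assumptions (A\ref{ass:first moment}), (A\ref{ass:subcritical instant offspring}), \eqref{eq:sup||(Ip-Lmu(theta+ieta)^-1||<infty}, and the finiteness of $\Lambda_\vartheta$ transfer verbatim to the matrix $\bmu$ of intensity measures of the branching process, so Theorem \ref{Thm:Markov renewal theorem U(t) non-lattice} supplies the expansion for $\bU(t)$, which, by the identification above, is precisely the expansion \eqref{eq:asymptotic expansion E[N_t] non-lattice} for $M(t)$.

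I do not foresee any genuine obstacle: the entire argument is a bookkeeping exercise. The only point deserving some care is that the ancestor itself contributes the $n = 0$ term $\Ip \1_{[0,\infty)}$ in the defining sum $\bU = \sum_{n \geq 0} \bmu^{*n}$; this is exactly what produces the unit jump of $m_{ii}$ at the origin, reflecting that a type-$i$ ancestor is counted in $N_t^j$ if and only if $i = j$.
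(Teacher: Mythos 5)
Your proposal is correct and follows the same route as the paper: the paper's proof is the one-line observation that $M(t) = \bU(t) = \sum_{n\geq 0}\bmu^{*n}(t)$, after which Theorem \ref{Thm:Markov renewal theorem U(t) non-lattice} applies verbatim. Your more detailed verification of this identity via the Markov renewal equation for the columns of $M$ and the uniqueness statement of Proposition \ref{Prop:existence and uniqueness MRE}(c) is a sound (and slightly more careful) way of justifying the identification the paper takes for granted.
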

\begin{proof}
The result follows immediately from Theorem \ref{Thm:Markov renewal theorem U(t) non-lattice}
since $M(t) = \bU(t) = \sum_{n=0}^\infty \bmu^{*n}(t)$ for all $t \geq 0$. 
\end{proof}

\begin{theorem}	\label{Thm:Markov renewal theorem E[Z_t^phi] non-lattice}
Consider a multi-type Crump-Mode-Jagers branching process $(\cZ_t^{\bphi})_{t \geq 0}$
counted with random characteristic $\bphi$.
Suppose that the matrix $\bmu=(\mu^{i,j})_{i,j\in\p}$ of intensity measures satisfies
(A\ref{ass:first moment}), (A\ref{ass:subcritical instant offspring}) and \eqref{eq:sup||(Ip-Lmu(theta+ieta)^-1||<infty}.
Further, suppose that $\Lambda_\vartheta$ is finite.
Assume that $f(t) \defeq (\E^1[\varphi^1](t),\ldots,\E^p[\varphi^p](t))^\transp$ vanishes on the negative halfline, is finite for every $t \geq 0$
and right-continuous with existing left limits as a function of $t$.
Further, suppose that
\begin{align} \nonumber
\int_0^\infty \!\!\! e^{-\theta x} \, \mathrm{V}\! f(x)  \, \dx < \infty
\end{align}
for some $\theta > \vartheta$ with $\Lambda_\vartheta=\Lambda_\theta$.
Then there exist matrices $B_{\lambda,k} \in \R^{p \times p}$, $k=0,\ldots,\k(\lambda)-1$, $\lambda \in \Lambda_\vartheta$,
and vectors $b_{\lambda,k} \in \R^p$, $k=0,\ldots,\k(\lambda)-1$, $\lambda \in \Lambda_\vartheta$ such that, for $F(t) \defeq (\E^1[\cZ_t^{\bphi}],\ldots,\E^p[\cZ_t^{\bphi}])^\transp$,
it holds that
\begin{align}
F(t) &=
\sum_{\lambda\in\Lambda_{\theta}} \!e^{\lambda t} \!\! \sum_{k=0}^{\k(\lambda)-1} \!\!\! B_{\lambda, k} \! \int \limits_0^t \! f(x) (t-x)^k e^{-\lambda x} \, \dx + O(e^{\theta t})	\nonumber	\\
&= \sum_{\lambda\in\Lambda_{\theta}} e^{\lambda t} \!\! \sum_{k=0}^{\k(\lambda)-1} \!\!\! t^k b_{\lambda, k}
+ O(e^{(\theta+\varepsilon) t})	\nonumber
\end{align}
as $t \to \infty$, where the error bounds $O(e^{\theta t})$ and $O(e^{(\theta+\varepsilon) t})$ as $t \to \infty$ are to be read component by component
and where $\varepsilon>0$ can be chosen arbitrarily small, in particular, $\theta+\varepsilon < \Real(\lambda)$ for every $\lambda \in \Lambda_\theta$.
\end{theorem}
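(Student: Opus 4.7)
non-lattice}]
The plan is to reduce the statement to a direct application of Theorem~\ref{Thm:Markov renewal theorem F(t) non-lattice}. The bridge between the branching process and the Markov renewal equation has already been established in Section~\ref{subsec:general branching process}: by conditioning on the first generation and using the independence/i.i.d.\ structure of the $(\bxi_u,\bzeta_u,\bphi_u)$, one obtains
\begin{equation*}
F_i(t) = f_i(t) + \sum_{j=1}^p \mu^{i,j} * F_j(t), \qquad i \in \p,
\end{equation*}
so $F = f + \bmu * F$, which is exactly \eqref{eq:Markov renewal} with the inhomogeneity $f(t) = (\E^1[\varphi^1(t)],\ldots,\E^p[\varphi^p(t)])^\transp$.

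Next, I would verify that the assumptions needed to apply Theorem~\ref{Thm:Markov renewal theorem F(t) non-lattice} are in force. The hypotheses on $\bmu$ — namely (A\ref{ass:first moment}), (A\ref{ass:subcritical instant offspring}), the bound \eqref{eq:sup||(Ip-Lmu(theta+ieta)^-1||<infty}, and finiteness of $\Lambda_\vartheta$ — are carried over verbatim. The hypotheses on $f$ are the vanishing on the negative halfline, right-continuity with existing left limits, and the total-variation integrability condition \eqref{eq:int e^-theta x f(x) < infty}; these are precisely what is assumed in the statement. I would also note that under \eqref{eq:int e^-theta x f(x) < infty}, $f$ is locally bounded, so Proposition~\ref{Prop:existence and uniqueness MRE}(c) guarantees a unique locally bounded solution given by $\bU * f$, and since $F$ as defined from the branching process is locally bounded (being dominated componentwise by $\bU * \mathrm{V}\!f$, which grows at most exponentially by Proposition~\ref{Prop:existence and uniqueness MRE}(b)), we may identify $F = \bU * f$.

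With the setup in place, I would invoke Theorem~\ref{Thm:Markov renewal theorem F(t) non-lattice} directly to obtain
\begin{equation*}
F(t) = \sum_{\lambda\in\Lambda_{\theta}} e^{\lambda t} \sum_{k=0}^{\k(\lambda)-1} B_{\lambda,k} \int_0^t f(x)(t-x)^k e^{-\lambda x} \, \dx + O(e^{\theta t})
\end{equation*}
and the second form
\begin{equation*}
F(t) = \sum_{\lambda\in\Lambda_{\theta}} e^{\lambda t} \sum_{k=0}^{\k(\lambda)-1} t^k b_{\lambda,k,f} + O(e^{(\theta+\varepsilon) t}),
\end{equation*}
with the matrices $B_{\lambda,k}$ and vectors $b_{\lambda,k,f}$ furnished by that theorem (and Remark~\ref{Rem:coefficients expansion for F(t) non-lattice}).

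There is no substantial obstacle: the only point that requires a line of care is the identification of the branching-process expectation $F$ with the unique locally bounded solution $\bU*f$, which amounts to checking local boundedness of $F$ under the given moment hypothesis on $f$. Everything else is an immediate specialization of the general Markov renewal result.
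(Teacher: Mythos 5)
Your proposal is correct and follows exactly the route the paper intends: the paper states this theorem without a separate proof, regarding it as an immediate consequence of the first-generation decomposition $F = f + \bmu * F$ derived in Section~\ref{subsec:general branching process} together with Theorem~\ref{Thm:Markov renewal theorem F(t) non-lattice}, which is precisely your reduction. Your extra remark on identifying the branching-process mean with the unique locally bounded solution $\bU*f$ is a reasonable point of care (though note that $\mathrm{V}\!f$ is the variation of the mean, not a pointwise bound on $\E^i[|\varphi^i(t)|]$, so the domination step would need the integrability of $\bphi$ that the paper itself only alludes to); this does not change the fact that your argument matches the paper's.
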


\subsection{Examples}	\label{subsec:Examples}

We start with an example demonstrating that if $\cL\bmu(\theta)$ is not primitive,
then the Malthusian parameter may have multiplicity strictly larger than $1$.

\begin{example}	\label{Exa:m'(alpha)=0}
Let $p=2$ and suppose that $\xi^{1,1}$ and $\xi^{2,2}$ are homogeneous
Poisson point processes on $[0,\infty)$ with rate $a>0$
and that $\xi^{1,2}$ is a homogeneous Poisson point process with rate $b > 0$.
Finally, let $\xi^{2,1}=0$. Then
\begin{align*}
\cL\bmu(z) =
\begin{pmatrix}
a	& b 	\\
0	& a
\end{pmatrix}
\frac1z,	\quad	\Real(z)>0.
\end{align*}
Then
\begin{align*}
\det(\mathbf{I}_2-\cL\bmu(z))
= \det
\begin{pmatrix}
1-\frac{a}{z}	& -\frac{b}{z} 	\\
0	& 1-\frac{a}{z}
\end{pmatrix}
= \big(1-\tfrac{a}{z}\big)^2.
\end{align*}
Hence, $\alpha = a$, $\Lambda = \{\alpha\}$ and $\alpha$ has multiplicity $2$.
\end{example}

The next example is to point out that the order of the pole at a zero $\lambda$
of the inverse $(\Ip-\cL\bmu(z))^{-1}$ can be strictly smaller than the order of the zero of the determinant $\det(\Ip-\cL\bmu(z))$.

\begin{example}	\label{Exa:multiplicity of determinant and Laurent series}
Let $m:[0,\infty) \to (0,\infty)$ be the Laplace transform of a locally finite measure on $[0,\infty)$
with $1 < m(\vartheta) < \infty$ for some $\vartheta \in \R$ and $\lim_{\theta \to \infty} m(\theta) < 1$.
Consider  
\begin{align*}
\cL\bmu(z) =
\begin{pmatrix}
m(z)	& 0 	\\
0	& m(z)
\end{pmatrix},	\quad	\Real(z) \geq \vartheta.
\end{align*}
Then
\begin{align*}
\det(\mathbf{I}_2-\cL\bmu(z)) =
\det
\begin{pmatrix}
1-m(z)	& 0 	\\
0		& 1-m(z)
\end{pmatrix}
= (1-m(z))^2.
\end{align*}
Any root of the determinant is, therefore, a root of even multiplicity.
In particular, since $m'(\alpha) \in (-\infty,0)$,
the Malthusian parameter $\alpha$ is a zero of the determinant with multiplicity $2$.
On the other hand,
\begin{align*}
(\mathbf{I}_2-\cL\bmu(z))^{-1} =
\begin{pmatrix}
\tfrac1{1-m(z)}	& 0 	\\
0			& \tfrac1{1-m(z)}
\end{pmatrix}
=
\begin{pmatrix}
\tfrac1{-m'(\alpha)}	& 0 	\\
0				& \tfrac1{-m'(\alpha)}
\end{pmatrix}
(z-\alpha)^{-1} + \begin{pmatrix}
h(z)	& 0 	\\
0				& h(z)
\end{pmatrix}
\end{align*}
where $h(z)$ is holomorphic at $z=\alpha$.
In other words, $(\mathbf{I}_2-\cL\bmu(z))^{-1}$ has a pole of order $1$ at $z=\alpha$.
\end{example}

\begin{example}
Consider a $2$-type process, in which particles of type $1$ give birth to particles of type $1$
according to a Poisson point process $\xi_{11}$ with intensity $\alpha>0$ and particles of type $2$ give birth
to particles of type $2$ also according to a Poisson point process $\xi_{22}$ with intensity $\alpha>0$
where $\xi_{11}$ and $\xi_{22}$ are independent. Further, let $\xi_{12}=\delta_0$ and $\xi_{21}=0$,
i.e., at the time of birth, every type-$1$-particle immediately produces a type-$2$-particle.
Then
\begin{align*}
\cL\bmu(z) =
\begin{pmatrix}
\tfrac{\alpha}z	& 1 	\\
0		& \tfrac{\alpha}z
\end{pmatrix},	\quad	\Real(z) > 0.
\end{align*}
Further, $\det(\mathbf{I}_2-\cL\bmu(z)) = (1-\frac{\alpha}z)^2$
and $\alpha$ is a zero of order two of the determinant $\det(\mathbf{I}_2-\cL\bmu(z))$.
Moreover,
\begin{align*}
(\mathbf{I}_2-\cL\bmu(z))^{-1}
&=
\frac{1}{(1-\frac{\alpha}{z})^2}
\begin{pmatrix}
1-\tfrac{\alpha}z	& 1 	\\
0		& 1-\tfrac{\alpha}z
\end{pmatrix}	\\
&= 
\begin{pmatrix}
0	& \alpha^2 	\\
0	& 0
\end{pmatrix} \frac{1}{(z-\alpha)^2}
+ 
\begin{pmatrix}
\alpha	& 2\alpha	\\
0		& \alpha
\end{pmatrix} \frac{1}{z-\alpha}
+
\begin{pmatrix}
1	& 1 	\\
0	& 1
\end{pmatrix}
,	\quad	\Real(z) > 0,\ z \not = \alpha.
\end{align*}
This means that $(\mathbf{I}_2-\cL\bmu(z))^{-1}$ has a pole of order $2$ at $z=\alpha$ and $\Lambda=\{\alpha\}$.
Consequently, we infer 
\begin{align*}
	\E[N_t]&=e^{\alpha t}\begin{pmatrix}
					1	&   1+t \alpha	\\
					0	& 1
					\end{pmatrix}+O(te^{\vartheta t})
\end{align*}
for any $\vartheta > 0$ with Theorem \ref{Thm:Markov renewal theorem E[N_t] non-lattice}
and the coefficients from Remark \ref{Rem:C_lambda,k}.
Indeed, the assumptions of the aforementioned theorem clearly hold and by the  remark we have
\begin{equation*}
C_{\alpha,0}
= \frac{1}{\alpha} A_{\alpha,1}-\frac{1}{\alpha^2} A_{\alpha,2}=
\begin{pmatrix}
1	&   1	\\
0	& 1
\end{pmatrix},
\end{equation*}
and
\begin{equation*}
C_{\alpha,1}
=  \frac{1}{\alpha} A_{\alpha,2}=
\begin{pmatrix}
0	&   \alpha	\\
0	& 0
\end{pmatrix}.
\end{equation*}
\smallskip

\noindent
There is an equivalent way to describe the above process, namely,
type-$1$ and type-$2$ particles reproduce according to independent homogeneous
Poisson processes $\xi_{11}$ and $\xi_{22}$ with intensity $\alpha>0$,
but instead of saying that every type-$1$ particle produces a type-$2$ particle directly at birth,
we say that whenever a type-$1$ particle is created (only type-$1$ particles produces type-$1$ particles),
a type-$2$ particle is also produced. This is equivalent to setting $\xi_{12} \defeq \xi_{11}$ (instead of $\delta_0$)
if for every type $1$-particle in the initial configuration, an additional type-$2$ particle is included.
The resulting process has the exact same particle configuration at any time whenever instead of starting with a particle of type-$1$ we start with two particles: one of type $1$ and the other of type $2$ and hence $(1,0)\E[N_t]=(1,1)\E[\widetilde N_t]$. The
 associated Laplace transform
$\cL\widetilde\bmu$ is
\begin{align*}
\cL\widetilde\bmu(z) =
\begin{pmatrix}
\tfrac{\alpha}z	& \tfrac{\alpha}{z} 	\\
0		& \tfrac{\alpha}z
\end{pmatrix},	\quad	\Real(z) > 0.
\end{align*}
Again, $\det(\mathbf{I}_2-\cL\widetilde\bmu(z)) = (1-\frac{\alpha}z)^2$, i.e., $\Lambda = \{\alpha\}$
and $\alpha$ is a zero of order two of the determinant $\det(\mathbf{I}_2-\cL\widetilde\bmu(z))$.
The Laurent expansion of $(\mathbf{I}_2-\cL\widetilde\bmu(z))^{-1}$ is now given by
\begin{align*}
(\mathbf{I}_2-\cL\widetilde\bmu(z))^{-1}
&=
\frac{1}{(1-\frac{\alpha}{z})^2}
\begin{pmatrix}
1-\tfrac{\alpha}z	& \tfrac\alpha{z} 	\\
0		& 1-\tfrac{\alpha}z
\end{pmatrix}	\\
&= 
\begin{pmatrix}
0	& \alpha^2 	\\
0	& 0
\end{pmatrix} \frac{1}{(z-\alpha)^2}
+ 
\begin{pmatrix}
\alpha	& \alpha	\\
0	& \alpha
\end{pmatrix} \frac{1}{z-\alpha}
+
\begin{pmatrix}
	1	& 0	\\
	0	& 1
\end{pmatrix} 
,
\quad	\Real(z) > 0,\ z \not = \alpha.
\end{align*}
The same argument as above leads to, in obvious notation,
\begin{equation*}
	\widetilde C_{\alpha,0}
	= \frac{1}{\alpha} \widetilde A_{\alpha,1}-\frac{1}{\alpha^2} \widetilde A_{\alpha,2}=
	\begin{pmatrix}
		 1 	&   0	\\
		0	&  1 
	\end{pmatrix},\quad
	\widetilde C_{\alpha,1}
	=  \frac{1}{\alpha} \widetilde A_{\lambda,2}=
	\begin{pmatrix}
		0	&   \alpha	\\
		0	& 0
	\end{pmatrix}
\end{equation*}
and as a result
\begin{align*}
	\E[\widetilde N_t]&=e^{\alpha t}\begin{pmatrix}
		 1 	&   t \alpha	\\
		0	&  1 
	\end{pmatrix}+O(te^{\vartheta t})
\end{align*}
\end{example}

\section*{Acknowledgement}
Matthias Meiners and Ivana Tomic were supported by DFG grant ME3625/4-1.
Part of this work was conducted during Matthias Meiners and Ivana Tomic's visit to the University of Wroc\l aw, for which they express their gratitude for the warm hospitality.

\begin{appendix}

\section{Auxiliary results concerning matrices}

In this section we gather some basic facts about matrices that we need somewhere in the paper.
These are not new. For the reader's convenience,
we provide either a source or (a sketch of) a proof.


Recall that a nonnegative $d \times d$ matrix $A$ is called \emph{primitive}
if $A^n$ has positive entries only for some nonnegative integer $n$.

Recall that
\begin{align}	
\varrho: \Dom(\cL\bmu) \to [0,\infty),	\quad	z \mapsto \varrho(z) \defeq \rho_{\cL \bmu(z)}.
\end{align}

We continue with a brief summary of properties of $\varrho$.

\begin{proposition}	\label{Prop:properties of varrho}
Suppose that (A\ref{ass:first moment}) holds.
\begin{enumerate}[(a)]
	\item
		The function $\varrho|_{\Dom(\cL\bmu) \cap \R}: \Dom(\cL\bmu) \cap \R \to [0,\infty)$
		is logarithmic convex (i.e., $\log \varrho$ is convex) and continuous.
	\item
		$\lim_{\theta \to \infty} \varrho(\theta)=\rho_{\bmu(0)}$.

	\item
		The function $\varrho|_{\Dom(\cL\bmu) \cap \R}: \Dom(\cL\bmu) \cap \R \to [0,\infty)$ is strictly decreasing
		on the set
		$\{\theta \in \Dom(\cL\bmu) \cap \R: \varrho(\theta) > \rho_{\bmu(0)}\}$.
	\item
		For every $z \in \Dom(\cL\bmu)$, we have $\varrho(z) \leq \varrho(\Real(z))$.
\end{enumerate}
\end{proposition}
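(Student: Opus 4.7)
My plan is to treat the four assertions in order, with (a) being the main technical point and the other three following from it via short arguments.

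For part (a), I would prove log-convexity through Gelfand's formula \eqref{eq:Gelfand} combined with Hölder's inequality applied twice. Fix $\theta_1,\theta_2\in\Dom(\cL\bmu)\cap\R$ and $t\in[0,1]$, and set $\theta\defeq t\theta_1+(1-t)\theta_2$. By Hölder's inequality applied to each integral, $\cL\mu^{i,j}(\theta)\leq \cL\mu^{i,j}(\theta_1)^{t}\cL\mu^{i,j}(\theta_2)^{1-t}$. Expanding $\cL\bmu(\theta)^{n}_{i,j}$ as a sum over indices $i=i_0,i_1,\ldots,i_n=j$ of products $\prod_{k=1}^n\cL\mu^{i_{k-1},i_k}(\theta)$, inserting the entrywise bound, and then applying a second Hölder over the summation index yields
\begin{equation*}
\cL\bmu(\theta)^n_{i,j}\leq (\cL\bmu(\theta_1)^n_{i,j})^t(\cL\bmu(\theta_2)^n_{i,j})^{1-t}.
\end{equation*}
Taking the norm $\|A\|\defeq\sum_{i,j}|a_{ij}|$ (equivalent to any other matrix norm up to constants and monotone in the entries for nonnegative matrices) and applying Hölder a third time to the outer sum, I obtain $\|\cL\bmu(\theta)^n\|\leq \|\cL\bmu(\theta_1)^n\|^t\|\cL\bmu(\theta_2)^n\|^{1-t}$. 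Raising to the $n$-th root and invoking \eqref{eq:Gelfand} delivers $\log\varrho(\theta)\leq t\log\varrho(\theta_1)+(1-t)\log\varrho(\theta_2)$. Continuity on $\interior{\Dom(\cL\bmu)\cap\R}$ then follows from convexity of $\log\varrho$, and continuity up to the boundary can be obtained from monotonicity (see below) together with dominated convergence for the Laplace transforms, which makes $\theta\mapsto\cL\bmu(\theta)$ continuous and hence $\varrho$ continuous by continuity of the spectral radius in the matrix entries.

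For part (b), dominated convergence yields $\cL\mu^{i,j}(\theta)=\int e^{-\theta x}\,\mu^{i,j}(\dx)\to\mu^{i,j}(\{0\})$ as $\theta\to\infty$ entrywise, using $e^{-\vartheta x}\mu^{i,j}(\dx)$ as the integrable dominating measure (which is finite by (A\ref{ass:first moment})). Since the spectral radius of a $p\times p$ matrix is a continuous function of its entries, $\varrho(\theta)\to\rho_{\bmu(0)}$. For part (c), I would note that a convex function with a finite limit at $+\infty$ must be non-increasing, and that wherever its value strictly exceeds this limit, it must in fact be strictly decreasing: if $\varrho$ were constant on some interval $[\theta_0,\theta_0+\delta]$ with $\varrho(\theta_0)>\rho_{\bmu(0)}$, then by convexity of $\log\varrho$ (and hence of $\varrho$) the slope would be non-decreasing past $\theta_0+\delta$, contradicting $\varrho\to\rho_{\bmu(0)}<\varrho(\theta_0)$.

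For part (d), the triangle inequality for integrals gives $|\cL\mu^{i,j}(z)|\leq\cL\mu^{i,j}(\Real(z))$ entrywise for every $z\in\Dom(\cL\bmu)$. Combined with the monotonicity of the spectral radius with respect to entrywise absolute value domination (Lemma \ref{Lem:rho is monotone}), this yields $\varrho(z)=\rho_{\cL\bmu(z)}\leq\rho_{\cL\bmu(\Real(z))}=\varrho(\Real(z))$.

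The main obstacle is the iterated Hölder argument in (a): the log-convexity of the spectral radius is not a priori obvious, and one has to be careful to apply Hölder at the correct level, namely, entrywise within each product along a path $i_0,i_1,\ldots,i_n$ and then again to the outer summation and matrix norm, before finally passing through Gelfand's formula. Once log-convexity is established, the remaining parts are short consequences combining convexity, dominated convergence, and standard entrywise monotonicity of the spectral radius for nonnegative matrices.
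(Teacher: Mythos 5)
Your proposal is correct, and it diverges from the paper's proof in an instructive way at the two places where the paper outsources the work. For the log-convexity in (a), the paper simply cites the main theorem of Kingman (1961); you instead reprove it from scratch via the three-fold H\"older argument (entrywise on the integrals, over the paths $i_0,\ldots,i_n$ in the expansion of $\cL\bmu(\theta)^n_{i,j}$, and over the outer sum defining the norm) followed by Gelfand's formula --- this is essentially Kingman's own proof, so it is sound, and it makes the argument self-contained at the cost of length. For the monotonicity underlying (c), the paper invokes Lemma \ref{Lem:largest eigenvalue cts and mon}, whose proof goes through the Perron--Frobenius theorem plus a perturbation argument to handle non-primitive matrices; you instead deduce that $\varrho$ is non-increasing purely from convexity together with the finite limit in (b), and then rule out constancy on subintervals of $\{\varrho>\rho_{\bmu(0)}\}$ by the non-decreasing-slope property of convex functions. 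This is a genuine simplification: it bypasses the Perron--Frobenius machinery entirely for this part. Parts (b) and (d) coincide with the paper's arguments (dominated convergence plus continuity of the spectral radius in the entries, and the triangle inequality plus Lemma \ref{Lem:rho is monotone}, respectively). Two minor points worth tightening: at the left endpoint of $\Dom(\cL\bmu)\cap\R$ the continuity of $\theta\mapsto\cL\bmu(\theta)$ should be justified by monotone convergence (or dominated convergence with dominating function $e^{-\theta_0 x}$, integrable precisely because the endpoint $\theta_0$ lies in the domain), and the passage from log-convexity to convexity of $\varrho$ itself, which your argument for (c) uses, deserves a word in the degenerate case $\varrho(\theta)=0$ --- neither affects the validity of the proof.
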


Before we prove this, we first provide some basic facts regarding the spectral radius.

\begin{lemma}	\label{Lem:largest eigenvalue cts and mon}
Let $(A_s)_{s \in I}$ be a family of nonnegative $d \times d$ matrices with decreasing and
continuous entries $a_s^{ij}: I \to [0,\infty)$,
$i,j=1,\ldots,d$ where $I \subseteq \R$ is a non-empty interval. Write $\rho_s \defeq \rho(A_s)$ for the modulus
of the largest eigenvalue of $A_s$.
\begin{enumerate}[(a)]
	\item
		The function $s \mapsto \rho_s$ is decreasing and continuous on $I$.
	\item
		If the $A_s$, $s \in I$ are primitive, then for every $s \in I$, there exists
		a unique normalized eigenvector $v_s$ associated with the Perron-Frobenius eigenvalue $\rho_s$ of $A_s$
		with positive entries only.
		The function $s \mapsto v_s$ is continuous on $I$.
	\item
		If the $A_s$, $s \in I$ are primitive and $\sum_{i,j=1}^d a_s^{ij}$ is strictly decreasing,
		then $s \mapsto \rho_s$ is strictly decreasing.
\end{enumerate}
\end{lemma}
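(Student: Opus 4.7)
The plan is to treat the three parts in order, using Perron--Frobenius theory and elementary monotonicity of the spectral radius of nonnegative matrices as the main tools.

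For (a), I would first establish monotonicity via the well-known fact that $0 \leq A \leq B$ entrywise implies $\rho(A) \leq \rho(B)$. This follows immediately from Gelfand's formula $\rho(A) = \lim_{n\to\infty} \|A^n\|^{1/n}$ applied with the entrywise $\ell^1$-norm $\|A\| = \sum_{i,j} |a_{ij}|$: the hypothesis $0 \leq A \leq B$ gives $A^n \leq B^n$ entrywise and hence $\|A^n\| \leq \|B^n\|$. For continuity, the coefficients of the characteristic polynomial $p_s(x) = \det(x\Id - A_s)$ are polynomial functions of the entries of $A_s$ and hence continuous in $s$; a standard continuity theorem for the zeros of monic polynomials (e.g.\ via Rouch\'e's theorem on circles of suitable radius) then implies that the multiset of eigenvalues depends continuously on $s$, and in particular so does $\rho_s = \max_i |\lambda_i(s)|$.

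For (b), Perron--Frobenius applied to the primitive matrix $A_s$ yields, for each fixed $s$, existence and uniqueness of a normalized eigenvector $v_s > 0$ with $A_s v_s = \rho_s v_s$ and $\|v_s\| = 1$; the $\rho_s$-eigenspace is one-dimensional and every nonnegative eigenvector lies on the ray spanned by $v_s$, so $v_s$ is unambiguously determined. To prove continuity, I would fix $s \in I$ and take an arbitrary sequence $s_n \to s$. Since the $v_{s_n}$ lie in the compact unit sphere, every subsequence admits a further subsequence converging to some $v^\ast$ with $\|v^\ast\| = 1$ and $v^\ast \geq 0$. Passing to the limit in $A_{s_n} v_{s_n} = \rho_{s_n} v_{s_n}$, using continuity of the entries together with (a), gives $A_s v^\ast = \rho_s v^\ast$, and uniqueness forces $v^\ast = v_s$. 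Since every subsequence has a further subsequence converging to the same limit $v_s$, the whole sequence $(v_{s_n})$ converges to $v_s$.

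For (c), the central tool is the strict monotonicity of the Perron--Frobenius eigenvalue: if $A$ is primitive and $B \geq 0$ satisfies $B \geq A$ entrywise with $B \neq A$, then $\rho(B) > \rho(A)$. To prove this, I would use the positive left Perron eigenvector $w^\transp$ of $A$ (so $w^\transp A = \rho(A) w^\transp$ with $w > 0$) together with any nonnegative right Perron eigenvector $u$ of $B$, i.e.\ $Bu = \rho(B) u$, $u \geq 0$, $u \neq 0$. Monotonicity from (a) gives $\rho(B) \geq \rho(A) > 0$; primitivity of $A$ yields $A^n > 0$ entrywise for some $n$, so $\rho(B)^n u = B^n u \geq A^n u > 0$, forcing $u > 0$. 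Because $B - A$ is nonnegative with at least one strictly positive entry while $w, u > 0$, we have $w^\transp(B-A) u > 0$; combined with $w^\transp B u = \rho(B) w^\transp u$ and $w^\transp A u = \rho(A) w^\transp u$ and $w^\transp u > 0$, this gives $\rho(B) > \rho(A)$. Applying this with $A = A_{s_2}$ and $B = A_{s_1}$ for $s_1 < s_2$ in $I$---where entrywise monotonicity gives $A_{s_1} \geq A_{s_2}$ and strict monotonicity of $\sum_{i,j} a_s^{ij}$ forces $A_{s_1} \neq A_{s_2}$---yields $\rho_{s_1} > \rho_{s_2}$, as desired.

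The main obstacle I anticipate is the strict inequality step in (c): one must derive strict positivity of the Perron eigenvector $u$ of $B$ from the primitivity of $A$ alone, since $B$ is not a priori primitive; this is the place where the hypothesis of primitivity (rather than mere irreducibility) is genuinely used. Once this is in place, the remaining arguments are routine consequences of Perron--Frobenius theory and compactness.
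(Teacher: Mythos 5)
Your proof is correct, but it departs from the paper's argument at several points, generally in the direction of being more self-contained. For the monotonicity in (a), you use the elementary entrywise comparison $0\leq A\leq B\Rightarrow\rho(A)\leq\rho(B)$ proved via Gelfand's formula (this is essentially the content of the paper's auxiliary Lemma~\ref{Lem:rho is monotone}, quoted there from the literature), whereas the paper first treats the primitive case through the Perron--Frobenius theorem and then reaches the general case by perturbing the entries by $\varepsilon$ to obtain primitive approximants and passing to the limit; your route avoids the perturbation entirely. The continuity part of (a) is identical in both proofs (continuity of the coefficients of the characteristic polynomial plus continuity of polynomial roots via Rouch\'e). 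For (b), the paper normalizes the first coordinate of the eigenvector, rewrites the eigenvalue equation as a linear system $(\rho_s \mathrm{I}_{d-1}-A_s')w_s'=a_s'$, and proves invertibility by showing $\rho(A_s')<\rho(A_s)$, so that continuity of $s\mapsto v_s$ follows from continuity of the matrix inverse; your compactness--uniqueness subsequence argument reaches the same conclusion more quickly and without the submatrix spectral comparison (note only that one should check that a subsequential limit $v^\ast\geq 0$ with $\|v^\ast\|=1$ lying in the one-dimensional $\rho_s$-eigenspace spanned by $v_s>0$ must equal $v_s$, which you do implicitly via uniqueness). For (c), the paper simply invokes the strict monotonicity statement of the Perron--Frobenius theorem, while you reprove it from scratch via the pairing $w^\transp(B-A)u>0$ between the positive left Perron vector of the smaller matrix and the right Perron vector of the larger one, including the verification that $u>0$; this is a correct and standard argument, and in the setting of the lemma it is even slightly easier than you suggest, since all $A_s$ are assumed primitive, so $B=A_{s_1}$ is itself primitive and the positivity of $u$ is immediate. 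In short: both proofs are valid; yours trades the paper's citations and perturbation step for elementary, self-contained arguments.
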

\begin{proof}
In this proof, we will repeatedly use the fact that if $p_\varepsilon(z)$ is a polynomial
of degree $\leq d-1$ for $\varepsilon \geq 0$ and the coefficients
of $p_\varepsilon(z)$ converge to those of $p_0(z)$ as $\varepsilon \downarrow 0$,
then the zeros of $z^d + p_\varepsilon(z)$ converge to those of $z^d + p_0(z)$ as $\varepsilon \downarrow 0$,
see e.g.\ Theorem (1,4) of \cite{Morris:1949} (a corollary of Rouch\'e's theorem).	\smallskip

\noindent
(a) and (c):
We start by showing that $s \mapsto \rho_s$ is continuous.
To this end, notice that $\rho_s$ is the largest root (in absolute value) of the characteristic polynomial
$\chi_s(z) = \det(z \Id - A_s)$.
The characteristic polynomial is of the form
$\chi_s(z) = z^d + p_{s}(z)$ where $p_{s}(z)$ is a polynomial of degree $\leq d-1$
with coefficients that are continuous in $s$. We conclude that $s \mapsto \rho_s$ is continuous.

In the next step, we prove the claimed monotonicity statements.
To this end, first suppose that the $A_s$, $s \in I$ are primitive.
Then $\rho_s$ is the Perron-Frobenius eigenvalue of $A_s$ and $s \mapsto \rho_s$
is nonincreasing by the Perron-Frobenius theorem \cite[Theorem 1.1(e)]{Seneta:1981}.
The latter also gives that $\rho_s$ is strictly decreasing
if the sum of all entries of $A_s$ is strictly decreasing.

The next item of business is to prove monotonicity without the additional assumption that the $A_s$, $s \in I$ are primitive.
Indeed, we may write $A_s^\varepsilon$ for the matrix $A_s$
after adding $\varepsilon$ to every non-zero entry of the matrix $A_s$
where $\varepsilon \geq 0$ is a parameter. Then $A_s = A_s^0$.
Write $\rho_s^\varepsilon \defeq \rho(A_s^\varepsilon)$.
Since $A_s^\varepsilon$ is primitive, $\rho_s^\varepsilon$ is (strictly) decreasing in $s$
by what we have already shown.
On the other hand, again by Theorem (1,4) of \cite{Morris:1949},
we infer $\rho_s^\varepsilon \to \rho_s$ as $\varepsilon \downarrow 0$.
Consequently, $s \mapsto \rho_s$ is decreasing as the pointwise limit of a sequence of decreasing functions.
The proof of (a) and (c) is therefore complete.	\smallskip

\noindent
(b) Suppose  the $A_s$, $s \in I$ are primitive.
The existence of a unique normalized eigenvector $v_s$ with positive entries only
associated with the Perron-Frobenius eigenvalue $\rho_s$ of $A_s$
then directly follows from the Perron-Frobenius theorem \cite[Theorem 1.1]{Seneta:1981}.
It remains to show the continuity of $s \mapsto v_s$ on $I$.
First, again by the Perron-Frobenius theorem, there exists a unique
with $w_s = (w_s^1,\ldots,w_s^d)^\transp \in \R^d$ with $A_s w_s = \rho_s w_s$ and $w_s^1=1$.
Then $v_s = w_s/|w_s|$. This means that
\begin{align*}
\begin{pmatrix}
a_s^{21} \\ \vdots \\ a_s^{d1}
\end{pmatrix}
+
\begin{pmatrix}
a_s^{22}	& \ldots	& a_s^{2d} \\
\vdots	& 		& \vdots \\
a_s^{d2}	& \ldots	& a_s^{dd}
\end{pmatrix}
\begin{pmatrix}
w_s^{2} \\ \vdots \\ w_s^{d}
\end{pmatrix}
= \rho_s
\begin{pmatrix}
w_s^{2} \\ \vdots \\ w_s^{d}
\end{pmatrix}
\end{align*}
We write this identity $a_s' + A_s' w_s'= \rho_s w_s'$ and $w_s' \in \R^{d-1}$.
This may be rewritten in the form $(\rho_s \mathbf{I}_{d-1} - A_s') w_s' = a_s'$.
The matrix $(\rho_s \mathbf{I}_{d-1} - A_s')$ is invertible as $\rho_s$ is not an eigenvalue of $A_s'$.
Indeed, $\rho(A_s') < \rho(A_s) = \rho_s$.
To see this in detail, consider
$C_0$ to be the $d \times d$ matrix with $A_s'$ in the bottom right corner and $0$'s in the first row and column.
Write $C_t$ for the matrix that has $A_s'$ in the bottom right corner and $a_{ij} \wedge t$
as $(i,j)$th entry for $i=1$ or $j=1$. Then $C_t \uparrow A_s$ as $t \uparrow \max\{a_{i,j}: i=1 \text{ or } j=1\}$
and $C_t \downarrow C_0$ as $t \downarrow 0$.
By part (a), $\rho(C_t)$ is increasing and by part (c), $\rho(C_t)$ is even strictly increasing
where the sum of the entries of $C_t$ is strictly increasing, which is the case in a neighborhood of the origin.
The claim follows.
We now infer
\begin{align*}
w_s' = (\rho_s \mathbf{I}_{d-1} - A_s')^{-1} a_s'.
\end{align*}
The right-hand side is a continuous function of $s \in I$
since $\rho_s$, $A_s'$ and $a_s'$ are continuous.
Therefore $w_s'$ and thus also $w_s$ and $v_s = w_s/|w_s|$ is a continuous function of $s$.
\end{proof}

\begin{proof}[Proof of Proposition \ref{Prop:properties of varrho}]
(a) The logarithmic convexity follows from the main theorem of \cite{Kingman:1961}.
Logarithmic convexity implies convexity and hence continuity on the interior of the domain, but we claim continuity on the entire domain,
which may contain the left boundary point.
However, the continuity on the entire domain follows from Lemma \ref{Lem:largest eigenvalue cts and mon}.	\smallskip

\noindent
(b) Notice that $\cL\mu^{i,j}(\theta) \to \mu^{i,j}(0)$ as $\theta \to \infty$ for all $i,j \in \p$.
In other words, $\lim_{\theta \to \infty} \cL\bmu(\theta) = \bmu(0)$.
We may thus extend the function $\varrho$ to $\infty$ by setting $\varrho(\infty) \defeq \rho_{\bmu(0)}$.
Lemma \ref{Lem:largest eigenvalue cts and mon} still applies and yields that the extended $\varrho$ is continuous
on $(\Dom(\mu)\cap\R) \cup \{\infty\}$, hence
\begin{align}	\label{eq:rho_mu(theta)->rho_mu(0)}
\varrho(\theta) = \rho_{\cL\bmu(\theta)} \to \rho_{\bmu(0)}	\quad	\text{as }	\theta \to \infty.
\end{align}

\noindent
(c) That $\varrho$ is decreasing again follows from Lemma \ref{Lem:largest eigenvalue cts and mon}.
Consequently, if $\varrho$ is not strictly decreasing on an interval, it is constant there.
By convexity and (b), $\varrho$ must be strictly decreasing on $\{\theta \in \Dom(\cL\bmu) \cap \R: \varrho(\theta) > \rho_{\bmu(0)}\}$.	\smallskip

\noindent
(d) follows from Lemma \ref{Lem:rho is monotone} below.
\end{proof}

\begin{lemma}	\label{Lem:rho_A<1}
Let $A$ be a nonnegative $d \times d$ matrix with $\rho(A) < 1$.
Then the series $\sum_{n=0}^\infty A^n$ converges and equals $(\Id-A)^{-1}$.
\end{lemma}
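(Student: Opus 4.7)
The plan is to deduce this standard Neumann-series identity directly from Gelfand's formula \eqref{eq:Gelfand}, which has already been stated in the paper.

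First, I would use Gelfand's formula to extract geometric decay of the matrix powers. Since $\rho(A) < 1$, pick any $r \in (\rho(A),1)$. By \eqref{eq:Gelfand}, $\|A^n\|^{1/n} \to \rho(A) < r$, so there exists $N \in \N$ with $\|A^n\| \leq r^n$ for all $n \geq N$. In particular, $\|A^n\| \to 0$, and $\sum_{n=0}^\infty \|A^n\| \leq \sum_{n<N}\|A^n\| + \sum_{n \geq N} r^n < \infty$. Hence the partial sums $S_N \defeq \sum_{n=0}^N A^n$ form a Cauchy sequence in the (complete) space of $d \times d$ matrices, and therefore converge to some matrix $S \in \R^{d \times d}$.

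Next, I would show that $S = (\Id - A)^{-1}$ by a telescoping argument. Directly from the definition,
\begin{equation*}
(\Id-A) S_N = \sum_{n=0}^N A^n - \sum_{n=1}^{N+1} A^n = \Id - A^{N+1}.
\end{equation*}
Letting $N \to \infty$ and using that $\|A^{N+1}\| \to 0$ gives $(\Id - A) S = \Id$. The same computation with the factor $(\Id-A)$ on the right yields $S(\Id-A) = \Id$. Thus $\Id-A$ is invertible (equivalently, $1 \notin \sigma(A)$, which also follows a priori from $\rho(A)<1$) and $S = (\Id-A)^{-1}$, establishing \eqref{eq:LU(theta)<infty}-type identities used elsewhere in the paper.

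There is no genuine obstacle here; the lemma is essentially the matrix Neumann series. The only point that deserves care is the justification that the partial sums converge in matrix norm (rather than merely entrywise), which is handled cleanly once the geometric bound $\|A^n\| \leq r^n$ is in hand via Gelfand's formula. Since the statement also claims that each entry of $\sum_n A^n$ is finite when $A$ is nonnegative, one may alternatively note that for a nonnegative matrix the entries of $S_N$ increase in $N$, so entrywise convergence is automatic from the normwise bound, making the identification with $(\Id-A)^{-1}$ unambiguous.
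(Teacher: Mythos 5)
Your argument is correct and complete. Note, however, that the paper does not actually prove this lemma: its ``proof'' is a one-line citation of Lancaster--Tismenetsky \cite[Section 11.1, Theorem 1]{Lancaster+Tismenetsky:1985}. What you supply is the standard self-contained Neumann-series argument, and it is well adapted to this paper because it relies only on Gelfand's formula \eqref{eq:Gelfand}, which is already stated in the text: choosing $r\in(\rho(A),1)$ gives $\|A^n\|\le r^n$ for large $n$, hence absolute convergence of $\sum_n A^n$ in the complete space of $d\times d$ matrices, and the telescoping identity $(\Id-A)S_N=\Id-A^{N+1}$ (and its right-handed counterpart) identifies the limit as the two-sided inverse of $\Id-A$. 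Your closing remark about monotonicity of the partial sums for nonnegative $A$ is a harmless bonus rather than a needed step, since norm convergence already implies entrywise convergence. In short: the proof is sound, and arguably an improvement over the paper in that it makes the lemma self-contained.
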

\begin{proof}
%
%
This is  \cite[Section 11.1, Theorem 1]{Lancaster+Tismenetsky:1985}.
\end{proof}

\begin{lemma}	\label{Lem:rho is monotone} 
Let $A =(a_{i,j})_{i,j =1,\ldots,d} \in \C^{d \times d}$ and $B = (b_{i,j})_{i,j=1,\ldots,d}$ be a nonnegative $d \times d$ matrix
with $|a_{i,j}| \leq b_{i,j}$ for all $i,j=1,\ldots,d$. Then $\rho(A) \leq \rho(B)$.
\end{lemma}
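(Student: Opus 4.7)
The plan is to combine Gelfand's formula (which is already stated in the paper as \eqref{eq:Gelfand}) with an entrywise domination of the matrix powers. The key observation is that if we denote by $|M|$ the entrywise-absolute-value matrix of a complex matrix $M$, then entrywise $|AB| \le |A| \cdot |B|$ by the triangle inequality applied to each entry, and by induction $|A^n| \le |A|^n \le B^n$ entrywise for every $n \in \N$.

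Concretely, I would proceed as follows. First I would verify by induction on $n$ that $|(A^n)_{i,j}| \le (B^n)_{i,j}$ for all $i,j$ and all $n \in \N_0$: for $n=0$ it is trivial, and the step follows from
\begin{equation*}
|(A^{n+1})_{i,j}| = \bigg|\sum_{k=1}^d (A^n)_{i,k} a_{k,j}\bigg| \le \sum_{k=1}^d |(A^n)_{i,k}|\,|a_{k,j}| \le \sum_{k=1}^d (B^n)_{i,k} b_{k,j} = (B^{n+1})_{i,j}.
\end{equation*}
Second, I would pick a submultiplicative matrix norm that is monotone with respect to the entrywise order on nonnegative matrices — for instance the induced $\ell^\infty$-operator norm $\|M\| = \max_i \sum_j |m_{i,j}|$, which clearly satisfies $\|M\| \le \|N\|$ whenever $|M| \le N$ entrywise. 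Combining with the entrywise estimate above, this yields $\|A^n\| \le \|B^n\|$ for every $n$.

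Finally, Gelfand's formula \eqref{eq:Gelfand} gives
\begin{equation*}
\rho(A) = \lim_{n \to \infty} \|A^n\|^{1/n} \le \lim_{n \to \infty} \|B^n\|^{1/n} = \rho(B),
\end{equation*}
which is the desired conclusion. There is no real obstacle here: the only thing that requires any care is choosing (or defining) a matrix norm that is simultaneously submultiplicative (so that Gelfand's formula applies) and monotone on nonnegative matrices (so that the entrywise bound transfers to norms); the induced $\ell^\infty$-norm (or equivalently the sum-of-absolute-values norm) accomplishes both.
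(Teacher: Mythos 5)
Your proof is correct. The paper does not actually prove this lemma; it simply cites \cite[Section 15.2, Theorem 1]{Lancaster+Tismenetsky:1985}, so there is no argument to compare against step by step. Your route --- entrywise induction giving $|(A^n)_{i,j}| \leq (B^n)_{i,j}$, transfer to a submultiplicative norm that is monotone on the entrywise order (the induced $\ell^\infty$-norm works as you say), and then Gelfand's formula \eqref{eq:Gelfand} --- is the standard self-contained proof of this classical comparison result, and every step checks out. The one point you rightly flag, the need for a norm that is both submultiplicative and monotone, is handled correctly; nothing is missing.
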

\begin{proof}
This is \cite[Section 15.2, Theorem 1]{Lancaster+Tismenetsky:1985}.
\end{proof}

\section{Auxiliary results concerning Laplace transforms and integrals}

The following lemma is probably well known but we could not provide a suitable source in the literature.
For the reader's convenience, we include a short proof. 

\begin{lemma}	\label{Lem:growth of fcts with finite LT}
Let $f:[0,\infty) \to [0,\infty)$ be a nondecreasing function with Laplace transform $\cL f$ finite in $\theta > 0$,
i.e.,
\begin{align*}
\cL f (\theta) = \int_0^\infty e^{-\theta x} f(x) \, \dx < \infty.
\end{align*}
Then $f(t) = o(e^{\theta t})$ as $t \to \infty$.
\end{lemma}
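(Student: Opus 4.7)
The plan is to exploit the monotonicity of $f$ to produce a local lower bound for the Laplace integral in terms of $f(t)e^{-\theta t}$, and then to conclude from the finiteness of the full integral that these local tails must vanish.

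More concretely, I would fix $h > 0$ (taking $h = 1$ is convenient) and use that $f$ is nondecreasing to obtain, for every $t \geq 0$,
\begin{equation*}
\int_t^{t+h} e^{-\theta x} f(x) \, \dx \;\geq\; f(t) \int_t^{t+h} e^{-\theta x} \, \dx \;=\; f(t) \, e^{-\theta t} \, \frac{1 - e^{-\theta h}}{\theta}.
\end{equation*}
Rearranging gives
\begin{equation*}
f(t) \, e^{-\theta t} \;\leq\; \frac{\theta}{1-e^{-\theta h}} \int_t^{t+h} e^{-\theta x} f(x) \, \dx.
\end{equation*}
Since $\cL f(\theta) < \infty$, the function $x \mapsto e^{-\theta x} f(x)$ is integrable on $[0,\infty)$, and hence the tail integral on the right-hand side tends to $0$ as $t \to \infty$. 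This yields $f(t) e^{-\theta t} \to 0$, that is, $f(t) = o(e^{\theta t})$.

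There is no real obstacle here; the only thing to be careful about is that $\theta > 0$ so the factor $(1-e^{-\theta h})/\theta$ is a strictly positive constant independent of $t$, and that integrability on $[0,\infty)$ implies the tail integrals over $[t,t+h]$ vanish (which follows from dominated convergence or directly from absolute continuity of the integral).
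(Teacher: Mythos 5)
Your proof is correct and rests on the same mechanism as the paper's: use the monotonicity of $f$ to bound $f(t)e^{-\theta t}$ by a constant multiple of a tail piece of the convergent integral $\int_0^\infty e^{-\theta x}f(x)\,\dx$, which must vanish as $t\to\infty$. The paper packages this slightly differently (writing $f(t)$ as a multiple of $\int_0^\infty e^{-\theta x}f(t)\,\dx$ and comparing with the integral over $[t,2t]$ and $[t,\infty)$), but your window $[t,t+h]$ version is the same idea and, if anything, a little cleaner.
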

\begin{proof}
For any $t > 0$, we have
\begin{align*}
f(t)
&= \frac1\theta \int \limits_0^\infty e^{-\theta x} f(t) \, \dx
= \frac1\theta e^{\theta t} \int \limits_0^t e^{-\theta (x+t)} f(t) \, \dx + \frac1\theta \int \limits_t^\infty e^{-\theta x} f(t) \, \dx	\\
&\leq \frac1\theta e^{\theta t} \int \limits_t^{2t} e^{-\theta x} f(x) \, \dx + \frac1\theta \int \limits_t^\infty e^{-\theta x} f(x) \, \dx
= o(e^{\theta t})	\quad	\text{as } t \to \infty.
\end{align*}
\end{proof}

\begin{lemma}	
For $\lambda \in \C \setminus \{0\}$, $k \in \N_0$ and $t \geq 0$, we have
\begin{equation}	\label{eq:int_0^tx^ke^lambdaxdx}
\int_0^t x^k e^{\lambda x} \, \dx = \frac{(-1)^{k+1} k!}{\lambda^{k+1}}\bigg(1-e^{\lambda t} \sum_{j=0}^k (-1)^j \frac{\lambda^j t^j}{j!}\bigg).
\end{equation}
\end{lemma}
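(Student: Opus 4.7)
The plan is to proceed by induction on $k \geq 0$. For the base case $k=0$, the identity reduces to the elementary calculation $\int_0^t e^{\lambda x}\,\dx = (e^{\lambda t}-1)/\lambda$, which matches the right-hand side: with $k=0$ the factor is $-1/\lambda$ and the inner sum is just $1$, giving $-\lambda^{-1}(1-e^{\lambda t})$, as required.

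For the inductive step, suppose the identity holds at level $k$. Applying integration by parts to $\int_0^t x^{k+1}e^{\lambda x}\,\dx$ with $u = x^{k+1}$ and $\dv = e^{\lambda x}\,\dx$ gives
\[
\int_0^t x^{k+1} e^{\lambda x}\,\dx
= \frac{t^{k+1} e^{\lambda t}}{\lambda} - \frac{k+1}{\lambda}\int_0^t x^k e^{\lambda x}\,\dx.
\]
Substituting the inductive hypothesis into the remaining integral transforms the right-hand side into
\[
\frac{t^{k+1} e^{\lambda t}}{\lambda} + \frac{(-1)^{k+2}(k+1)!}{\lambda^{k+2}}\left(1 - e^{\lambda t}\sum_{j=0}^k (-1)^j \frac{\lambda^j t^j}{j!}\right).
\]
The only task then is to absorb the isolated boundary term $t^{k+1} e^{\lambda t}/\lambda$ as the missing $j=k+1$ contribution to the closed-form sum at level $k+1$, which is a direct sign-and-factorial check: $(-1)^{k+2}(k+1)!/\lambda^{k+2}$ multiplied by $-(-1)^{k+1}\lambda^{k+1}t^{k+1}/(k+1)!$ equals exactly $t^{k+1}e^{\lambda t}/\lambda$.

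An equally clean alternative, which avoids carrying the hypothesis along, is to verify the identity by differentiating both sides with respect to $t$. Both sides vanish at $t=0$ (the bracket on the right-hand side evaluates to $1-1=0$). Differentiating the right-hand side using the product rule produces two sums, and a single index shift $j \mapsto j-1$ in the derivative-of-$t^j$ piece causes telescoping, leaving only the endpoint term $(-1)^k \lambda^{k+1}t^k/k!$; this cancels against the prefactor $(-1)^{k+1}k!/\lambda^{k+1}$ and the outer $-e^{\lambda t}$ factor to give precisely $t^k e^{\lambda t}$, matching the derivative of the left-hand side.

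The only genuine obstacle is bookkeeping of the signs and factorials in either approach; both routes avoid any analytic subtlety since $\lambda \neq 0$ and the integrand is entire. I would present the induction version, as it keeps the algebra linear rather than requiring the telescoping observation.
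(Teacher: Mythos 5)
Your proof is correct, but it takes a genuinely different route from the paper's. The paper first establishes the identity only for $\lambda=-\theta<0$ by a probabilistic argument: it recognizes $\int_0^t \frac{\theta^{k+1}x^k}{k!}e^{-\theta x}\,\dx$ as $\Prob(S_{k+1}\le t)$ for a $\Gamma(k+1,\theta)$-distributed sum of exponentials, rewrites this as $\Prob(N_t\ge k+1)$ for a rate-$\theta$ Poisson process, and reads off the truncated exponential sum from the Poisson probabilities; it then extends to all $\lambda\in\C\setminus\{0\}$ by the identity theorem, since both sides are holomorphic in $\lambda$ off the origin. Your induction via integration by parts is more elementary and works directly for every complex $\lambda\neq 0$ in one pass, with no need for the analytic-continuation step; I checked the base case and the absorption of the boundary term $t^{k+1}e^{\lambda t}/\lambda$ as the $j=k+1$ summand, and the signs and factorials do work out as you claim. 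What the paper's route buys is a conceptual explanation of where the truncated exponential series comes from (the Gamma--Poisson duality), at the cost of invoking the uniqueness theorem for holomorphic functions; what yours buys is self-containedness and uniformity in $\lambda$. Either proof is acceptable here.
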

\begin{proof}
First consider $\lambda = -\theta$ for some $\theta > 0$ and write $S_{k+1}$ for a random variable with distribution
$\Gamma(k+1,\theta)$, which is the $(k+1)$-fold convolution power of $\Exp(\theta)$.
Since the increments in a homogeneous Poisson process $(N_t)_{t \geq 0}$
with intensity $\theta>0$ are independent exponentials,
we have $\Prob(N_t \geq k+1) = \Prob(S_{k+1} \leq t)$. Hence,
\begin{align*}
\int_0^t x^{k} e^{-\theta x} \, \dx &= \frac{k!}{\theta^{k+1}} \int_0^t \frac{\theta^{k+1} x^{k}}{k!} e^{-\theta x} \, \dx
= \frac{k!}{\theta^{k+1}} \Prob(S_{k+1} \leq t) = \frac{k!}{\theta^{k+1}} (1-\Prob(N_t \leq k))	\\
&= \frac{k!}{\theta^{k+1}}\bigg(1-e^{-\theta t} \sum_{j=0}^k \frac{(\theta t)^j}{j!}\bigg).
\end{align*}
This is \eqref{eq:int_0^tx^ke^lambdaxdx} in the special case $\lambda = -\theta < 0$.
Since both sides of \eqref{eq:int_0^tx^ke^lambdaxdx} are holomorphic functions in $\lambda \in \C \setminus \{0\}$,
the assertion follows from the uniqueness theorem for holomorphic functions.
\end{proof}

\end{appendix}


%
\bibliographystyle{abbrv}

\bibliography{Branching_processes}	 


\end{document}